\tikzstyle{bwSpider}=[
 \tikzstyle{wbSpider}=[
\tikzstyle{cWire}=[densely dotted, thick]
\tikzstyle{env}=[copoint,regular polygon rotate=0,minimum width=0.2cm, fill=black]
\tikzstyle{probs}=[shape=semicircle,fill=white,draw=black,shape border rotate=180,minimum width=1.2cm]
\tikzstyle{every picture}=[baseline=-0.25em,scale=0.5]
\tikzstyle{dotpic}=[] % for backwards-compatibility
\tikzstyle{diredges}=[every to/.style={diredge}]
\tikzstyle{math matrix}=[matrix of math nodes,left delimiter=(,right delimiter=),inner sep=2pt,column sep=1em,row sep=0.5em,nodes={inner sep=0pt},text height=1.5ex, text depth=0.25ex]
\tikzstyle{inline text}=[text height=1.5ex, text depth=0.25ex,yshift=0.5mm]
\tikzstyle{label}=[font=\footnotesize,text height=1.5ex, text depth=0.25ex,yshift=0.5mm]
\tikzstyle{left label}=[label,anchor=east,xshift=1.5mm]
\tikzstyle{right label}=[label,anchor=west,xshift=-1.5mm]
\tikzstyle{braceedge}=[decorate,decoration={brace,amplitude=2mm,raise=-1mm}]
\tikzstyle{small braceedge}=[decorate,decoration={brace,amplitude=1mm,raise=-1mm}]
\tikzstyle{doubled}=[line width=1.6pt] % set the line width for all doubled (quantum) maps/wires
\tikzstyle{boldedge}=[doubled,shorten <=-0.17mm,shorten >=-0.17mm]
\tikzstyle{boldedgegray}=[doubled,gray,shorten <=-0.17mm,shorten >=-0.17mm]
\tikzstyle{singleedgegray}=[gray]%,shorten <=-0.1mm,shorten >=-0.1mm]
\tikzstyle{semidoubled}=[line width=1.4pt] % set the line width for all doubled (quantum) maps/wires
\tikzstyle{semiboldedgegray}=[semidoubled,gray,shorten <=-0.17mm,shorten >=-0.17mm]
\tikzstyle{boxedge}=[semiboldedgegray]
\tikzstyle{dottededge}=[dashed,shorten <=-0.17mm,shorten >=-0.17mm]
\tikzstyle{boldedgedashed}=[very thick,dashed,shorten <=-0.17mm,shorten >=-0.17mm]
\tikzstyle{vboldedgedashed}=[doubled,dashed,shorten <=-0.17mm,shorten >=-0.17mm]
\tikzstyle{left hook arrow}=[left hook-latex]
\tikzstyle{right hook arrow}=[right hook-latex]
\tikzstyle{sembracket}=[line width=0.5pt,shorten <=-0.07mm,shorten >=-0.07mm]
\tikzstyle{causal edge}=[->,thick,gray]
\tikzstyle{causal nondir}=[thick,gray]
\tikzstyle{timeline}=[thick,gray, dashed]
\tikzstyle{cedge}=[<->,thick,gray!70!white]
\tikzstyle{empty diagram}=[draw=gray!40!white,dashed,shape=rectangle,minimum width=1cm,minimum height=1cm]
\tikzstyle{empty diagram small}=[draw=gray!50!white,dashed,shape=rectangle,minimum width=0.6cm,minimum height=0.5cm]
\tikzstyle{dot}=[inner sep=0mm,minimum width=2mm,minimum height=2mm,draw,shape=circle]
\tikzstyle{phase dot}=[pdot,phase dimensions]
\tikzstyle{wphase dot}=[dot, phase dimensions]
\tikzstyle{leak}=[white dot, shape=regular polygon, minimum size=300mm, regular polygon sides=3, outer sep=-0.2mm, regular polygon rotate=270]
\tikzstyle{preleak}=[trapezium, trapezium angle=67.5, draw, inner sep=0.1pt, outer sep=0pt, minimum height=2mm, minimum width=4pt,rotate=270]
\tikzstyle{proj}=[white dot, shape=regular polygon, minimum size=3.3 mm, regular polygon sides=4, outer sep=-0.2mm]
\tikzstyle{Vleak}=[white dot, shape=regular polygon, minimum size=3.3 mm, regular polygon sides=3, outer sep=-0.2mm, regular polygon rotate=90]
\tikzstyle{dleak}=[white dot, line width=1.6pt, shape=regular polygon, minimum size=3.3 mm, regular polygon sides=3, outer sep=-0.2mm, regular polygon rotate=270]
\tikzstyle{Wsquare}=[white dot, shape=regular polygon, rounded corners=0.8 mm, minimum size=3.3 mm, regular polygon sides=3, outer sep=-0.2mm]
\tikzstyle{Wsquareadj}=[white dot, shape=regular polygon, rounded corners=0.8 mm, minimum size=3.3 mm, regular polygon sides=3, outer sep=-0.2mm, regular polygon rotate=180]
\tikzstyle{ddot}=[inner sep=0mm, doubled, minimum width=2.5mm,minimum height=2.5mm,draw,shape=circle]
\tikzstyle{black dot}=[dot,fill=black]
\tikzstyle{white dot}=[dot,fill=white,,text depth=-0.2mm]
\tikzstyle{white Wsquare}=[Wsquare,fill=gray,,text depth=-0.2mm]
\tikzstyle{white Wsquareadj}=[Wsquareadj,fill=white,,text depth=-0.2mm]
\tikzstyle{green dot}=[white dot] % for backwards-compatibility
\tikzstyle{gray dot}=[dot,fill=gray!40!white,,text depth=-0.2mm]
\tikzstyle{red dot}=[gray dot] % for backwards-compatibility
\tikzstyle{black ddot}=[ddot,fill=black]
\tikzstyle{white ddot}=[ddot,fill=white]
\tikzstyle{gray ddot}=[ddot,fill=gray!40!white]
\tikzstyle{gray edge}=[gray!60!white]
\tikzstyle{small dot}=[inner sep=0.5mm,minimum width=0pt,minimum height=0pt,draw,shape=circle]
\tikzstyle{small black dot}=[small dot,fill=black]
\tikzstyle{small white dot}=[small dot,fill=white]
\tikzstyle{small gray dot}=[small dot,fill=gray!40!white]
\tikzstyle{causal dot}=[inner sep=0.4mm,minimum width=0pt,minimum height=0pt,draw=white,shape=circle,fill=gray!40!white]
\tikzstyle{phase dimensions}=[minimum size=5mm,font=\footnotesize,rectangle,rounded corners=2.5mm,inner sep=0.2mm,outer sep=-2mm]
\tikzstyle{dphase dimensions}=[minimum size=5mm,font=\footnotesize,rectangle,rounded corners=2.5mm,inner sep=0.2mm,outer sep=-2mm]
\tikzstyle{white phase dot}=[dot,fill=white,phase dimensions]
\tikzstyle{white phase ddot}=[ddot,fill=white,dphase dimensions]
\tikzstyle{white rect ddot}=[draw=black,fill=white,doubled,minimum size=5mm,font=\footnotesize,rectangle,rounded corners=2.5mm,inner sep=0.2mm]
\tikzstyle{gray rect ddot}=[draw=black,fill=gray!40!white,doubled,minimum size=6mm,font=\footnotesize,rectangle,rounded corners=3mm]
\tikzstyle{gray phase dot}=[dot,fill=gray!40!white,phase dimensions]
\tikzstyle{gray phase ddot}=[ddot,fill=gray!40!white,dphase dimensions]
\tikzstyle{grey phase dot}=[gray phase dot]
\tikzstyle{grey phase ddot}=[gray phase ddot]
\tikzstyle{small phase dimensions}=[minimum size=4mm,font=\tiny,rectangle,rounded corners=2mm,inner sep=0.2mm,outer sep=-2mm]
\tikzstyle{small dphase dimensions}=[minimum size=4mm,font=\tiny,rectangle,rounded corners=2mm,inner sep=0.2mm,outer sep=-2mm]
\tikzstyle{small gray phase dot}=[dot,fill=gray!40!white,small phase dimensions]
\tikzstyle{small gray phase ddot}=[ddot,fill=gray!40!white,small dphase dimensions]
\tikzstyle{small map}=[draw,shape=rectangle,minimum height=4mm,minimum width=4mm,fill=white]
\tikzstyle{cnot}=[fill=white,shape=circle,inner sep=-1.4pt]
\tikzstyle{asym hadamard}=[fill=white,draw,shape=NEbox,inner sep=0.6mm,font=\footnotesize,minimum height=4mm]
\tikzstyle{asym hadamard conj}=[fill=white,draw,shape=NWbox,inner sep=0.6mm,font=\footnotesize,minimum height=4mm]
\tikzstyle{asym hadamard dag}=[fill=white,draw,shape=SEbox,inner sep=0.6mm,font=\footnotesize,minimum height=4mm]
\tikzstyle{hadamard}=[fill=white,draw,inner sep=0.6mm,font=\footnotesize,minimum height=4mm,minimum width=4mm]
\tikzstyle{small hadamard}=[fill=white,draw,inner sep=0.6mm,minimum height=1.5mm,minimum width=1.5mm]
\tikzstyle{small hadamard rotate}=[small hadamard,rotate=45]
\tikzstyle{dhadamard}=[hadamard,doubled]
\tikzstyle{small dhadamard}=[small hadamard,doubled]
\tikzstyle{small dhadamard rotate}=[small hadamard rotate,doubled]
\tikzstyle{antipode}=[white dot,inner sep=0.3mm,font=\footnotesize]
\tikzstyle{scalar}=[diamond,draw,inner sep=0.5pt,font=\small]
\tikzstyle{dscalar}=[diamond,doubled, draw,inner sep=0.5pt,font=\small]
\tikzstyle{small box}=[rectangle,inline text,fill=white,draw,minimum height=5mm,yshift=-0.5mm,minimum width=5mm,font=\small]
\tikzstyle{small gray box}=[small box,fill=gray!30]
\tikzstyle{medium box}=[rectangle,inline text,fill=white,draw,minimum height=5mm,yshift=-0.5mm,minimum width=10mm,font=\small]
\tikzstyle{square box}=[small box] % for backwards-compatibility
\tikzstyle{medium gray box}=[small box,fill=gray!30]
\tikzstyle{semilarge box}=[rectangle,inline text,fill=white,draw,minimum height=5mm,yshift=-0.5mm,minimum width=12.5mm,font=\small]
\tikzstyle{large box}=[rectangle,inline text,fill=white,draw,minimum height=5mm,yshift=-0.5mm,minimum width=15mm,font=\small]
\tikzstyle{large gray box}=[small box,fill=gray!30]
\tikzstyle{Bayes box}=[rectangle,fill=black,draw, minimum height=3mm, minimum width=3mm]
\tikzstyle{gray square point}=[small box,fill=gray!50]
\tikzstyle{dphase box white}=[dhadamard]
\tikzstyle{dphase box gray}=[dhadamard,fill=gray!50!white]
\tikzstyle{phase box white}=[hadamard]
\tikzstyle{phase box gray}=[hadamard,fill=gray!50!white]
\tikzstyle{point}=[regular polygon,regular polygon sides=3,draw,scale=0.75,inner sep=-0.5pt,minimum width=9mm,fill=white,regular polygon rotate=180]
\tikzstyle{point nosep}=[regular polygon,regular polygon sides=3,draw,scale=0.75,inner sep=-2pt,minimum width=9mm,fill=white,regular polygon rotate=180]
\tikzstyle{copoint}=[regular polygon,regular polygon sides=3,draw,scale=0.75,inner sep=-0.5pt,minimum width=9mm,fill=white]
\tikzstyle{dpoint}=[point,doubled]
\tikzstyle{dcopoint}=[copoint,doubled]
\tikzstyle{pointgrow}=[shape=cornerpoint,kpoint common,scale=0.75,inner sep=3pt]
\tikzstyle{pointgrow dag}=[shape=cornercopoint,kpoint common,scale=0.75,inner sep=3pt]
\tikzstyle{wide copoint}=[fill=white,draw,shape=isosceles triangle,shape border rotate=90,isosceles triangle stretches=true,inner sep=0pt,minimum width=1.5cm,minimum height=6.12mm]
\tikzstyle{wide point}=[fill=white,draw,shape=isosceles triangle,shape border rotate=-90,isosceles triangle stretches=true,inner sep=0pt,minimum width=1.5cm,minimum height=6.12mm,yshift=-0.0mm]
\tikzstyle{wide point plus}=[fill=white,draw,shape=isosceles triangle,shape border rotate=-90,isosceles triangle stretches=true,inner sep=0pt,minimum width=1.74cm,minimum height=7mm,yshift=-0.0mm]
\tikzstyle{wide dpoint}=[fill=white,doubled,draw,shape=isosceles triangle,shape border rotate=-90,isosceles triangle stretches=true,inner sep=0pt,minimum width=1.5cm,minimum height=6.12mm,yshift=-0.0mm]
\tikzstyle{tinypoint}=[regular polygon,regular polygon sides=3,draw,scale=0.55,inner sep=-0.15pt,minimum width=6mm,fill=white,regular polygon rotate=180]
\tikzstyle{white point}=[point]
\tikzstyle{white dpoint}=[dpoint]
\tikzstyle{green point}=[white point] % for backwards-compatibility
\tikzstyle{white copoint}=[copoint]
\tikzstyle{gray point}=[point,fill=gray!40!white]
\tikzstyle{gray dpoint}=[gray point,doubled]
\tikzstyle{red point}=[gray point] % for backwards-compatibility
\tikzstyle{gray copoint}=[copoint,fill=gray!40!white]
\tikzstyle{gray dcopoint}=[gray copoint,doubled]
\tikzstyle{white point guide}=[regular polygon,regular polygon sides=3,font=\scriptsize,draw,scale=0.65,inner sep=-0.5pt,minimum width=9mm,fill=white,regular polygon rotate=180]
\tikzstyle{black point}=[point,fill=black,font=\color{white}]
\tikzstyle{black copoint}=[copoint,fill=black,font=\color{white}]
\tikzstyle{tiny gray point}=[tinypoint,fill=gray!40!white]
\tikzstyle{diredge}=[->]
\tikzstyle{ddiredge}=[<->]
\tikzstyle{rdiredge}=[<-]
\tikzstyle{thickdiredge}=[->, very thick]
\tikzstyle{pointer edge}=[->,very thick,gray]
\tikzstyle{pointer edge part}=[very thick,gray]
\tikzstyle{dashed edge}=[dashed]
\tikzstyle{thick dashed edge}=[very thick,dashed]
\tikzstyle{thick gray dashed edge}=[thick dashed edge,gray!40]
\tikzstyle{thick map edge}=[very thick,|->]
\newcommand{\boxshape}[3]{%
\pgfdeclareshape{#1}{
\inheritsavedanchors[from=rectangle] % this is nearly a rectangle
\inheritanchorborder[from=rectangle]
\inheritanchor[from=rectangle]{center}
\inheritanchor[from=rectangle]{north}
\inheritanchor[from=rectangle]{south}
\inheritanchor[from=rectangle]{west}
\inheritanchor[from=rectangle]{east}
% ... and possibly more
\backgroundpath{% this is new
% store lower right in xa/ya and upper right in xb/yb
\southwest \pgf@xa=\pgf@x \pgf@ya=\pgf@y
\northeast \pgf@xb=\pgf@x \pgf@yb=\pgf@y

\@tempdima=#2
\@tempdimb=#3

\pgfpathmoveto{\pgfpoint{\pgf@xa - 5pt + \@tempdima}{\pgf@ya}}
\pgfpathlineto{\pgfpoint{\pgf@xa - 5pt - \@tempdima}{\pgf@yb}}
\pgfpathlineto{\pgfpoint{\pgf@xb + 5pt + \@tempdimb}{\pgf@yb}}
\pgfpathlineto{\pgfpoint{\pgf@xb + 5pt - \@tempdimb}{\pgf@ya}}
\pgfpathlineto{\pgfpoint{\pgf@xa - 5pt + \@tempdima}{\pgf@ya}}
\pgfpathclose
}
}}
\tikzstyle{cloud}=[shape=cloud,draw,minimum width=1.5cm,minimum height=1.5cm]
\tikzstyle{map}=[draw,shape=NEbox,inner sep=2pt,minimum height=6mm,fill=white]
\tikzstyle{dashedmap}=[draw,dashed,shape=NEbox,inner sep=2pt,minimum height=6mm,fill=white]
\tikzstyle{mapdag}=[draw,shape=SEbox,inner sep=2pt,minimum height=6mm,fill=white]
\tikzstyle{mapadj}=[draw,shape=SEbox,inner sep=2pt,minimum height=6mm,fill=white]
\tikzstyle{maptrans}=[draw,shape=SWbox,inner sep=2pt,minimum height=6mm,fill=white]
\tikzstyle{mapconj}=[draw,shape=NWbox,inner sep=2pt,minimum height=6mm,fill=white]
\tikzstyle{medium map}=[draw,shape=NEbox,inner sep=2pt,minimum height=6mm,fill=white,minimum width=7mm]
\tikzstyle{medium map dag}=[draw,shape=SEbox,inner sep=2pt,minimum height=6mm,fill=white,minimum width=7mm]
\tikzstyle{medium map adj}=[draw,shape=SEbox,inner sep=2pt,minimum height=6mm,fill=white,minimum width=7mm]
\tikzstyle{medium map trans}=[draw,shape=SWbox,inner sep=2pt,minimum height=6mm,fill=white,minimum width=7mm]
\tikzstyle{medium map conj}=[draw,shape=NWbox,inner sep=2pt,minimum height=6mm,fill=white,minimum width=7mm]
\tikzstyle{semilarge map}=[draw,shape=NEbox,inner sep=2pt,minimum height=6mm,fill=white,minimum width=9.5mm]
\tikzstyle{semilarge map trans}=[draw,shape=SWbox,inner sep=2pt,minimum height=6mm,fill=white,minimum width=9.5mm]
\tikzstyle{semilarge map adj}=[draw,shape=SEbox,inner sep=2pt,minimum height=6mm,fill=white,minimum width=9.5mm]
\tikzstyle{semilarge map dag}=[draw,shape=SEbox,inner sep=2pt,minimum height=6mm,fill=white,minimum width=9.5mm]
\tikzstyle{semilarge map conj}=[draw,shape=NWbox,inner sep=2pt,minimum height=6mm,fill=white,minimum width=9.5mm]
\tikzstyle{large map}=[draw,shape=NEbox,inner sep=2pt,minimum height=6mm,fill=white,minimum width=12mm]
\tikzstyle{large map conj}=[draw,shape=NWbox,inner sep=2pt,minimum height=6mm,fill=white,minimum width=12mm]
\tikzstyle{very large map}=[draw,shape=NEbox,inner sep=2pt,minimum height=6mm,fill=white,minimum width=17mm]
\tikzstyle{medium dmap}=[draw,doubled,shape=NEbox,inner sep=2pt,minimum height=6mm,fill=white,minimum width=7mm]
\tikzstyle{medium dmap dag}=[draw,doubled,shape=SEbox,inner sep=2pt,minimum height=6mm,fill=white,minimum width=7mm]
\tikzstyle{medium dmap adj}=[draw,doubled,shape=SEbox,inner sep=2pt,minimum height=6mm,fill=white,minimum width=7mm]
\tikzstyle{medium dmap trans}=[draw,doubled,shape=SWbox,inner sep=2pt,minimum height=6mm,fill=white,minimum width=7mm]
\tikzstyle{medium dmap conj}=[draw,doubled,shape=NWbox,inner sep=2pt,minimum height=6mm,fill=white,minimum width=7mm]
\tikzstyle{semilarge dmap}=[draw,doubled,shape=NEbox,inner sep=2pt,minimum height=6mm,fill=white,minimum width=9.5mm]
\tikzstyle{semilarge dmap trans}=[draw,doubled,shape=SWbox,inner sep=2pt,minimum height=6mm,fill=white,minimum width=9.5mm]
\tikzstyle{semilarge dmap adj}=[draw,doubled,shape=SEbox,inner sep=2pt,minimum height=6mm,fill=white,minimum width=9.5mm]
\tikzstyle{semilarge dmap dag}=[draw,doubled,shape=SEbox,inner sep=2pt,minimum height=6mm,fill=white,minimum width=9.5mm]
\tikzstyle{semilarge dmap conj}=[draw,doubled,shape=NWbox,inner sep=2pt,minimum height=6mm,fill=white,minimum width=9.5mm]
\tikzstyle{large dmap}=[draw,doubled,shape=NEbox,inner sep=2pt,minimum height=6mm,fill=white,minimum width=12mm]
\tikzstyle{large dmap conj}=[draw,doubled,shape=NWbox,inner sep=2pt,minimum height=6mm,fill=white,minimum width=12mm]
\tikzstyle{large dmap trans}=[draw,doubled,shape=SWbox,inner sep=2pt,minimum height=6mm,fill=white,minimum width=12mm]
\tikzstyle{large dmap adj}=[draw,doubled,shape=SEbox,inner sep=2pt,minimum height=6mm,fill=white,minimum width=12mm]
\tikzstyle{large dmap dag}=[draw,doubled,shape=SEbox,inner sep=2pt,minimum height=6mm,fill=white,minimum width=12mm]
\tikzstyle{very large dmap}=[draw,doubled,shape=NEbox,inner sep=2pt,minimum height=6mm,fill=white,minimum width=19.5mm]
\tikzstyle{muxbox}=[draw,shape=rectangle,minimum height=3mm,minimum width=3mm,fill=white]
\tikzstyle{dmuxbox}=[muxbox,doubled]
\tikzstyle{box}=[draw,shape=rectangle,inner sep=2pt,minimum height=6mm,minimum width=6mm,fill=white]
\tikzstyle{dbox}=[draw,doubled,shape=rectangle,inner sep=2pt,minimum height=6mm,minimum width=6mm,fill=white]
\tikzstyle{dmap}=[draw,doubled,shape=NEbox,inner sep=2pt,minimum height=6mm,fill=white]
\tikzstyle{dmapdag}=[draw,doubled,shape=SEbox,inner sep=2pt,minimum height=6mm,fill=white]
\tikzstyle{dmapadj}=[draw,doubled,shape=SEbox,inner sep=2pt,minimum height=6mm,fill=white]
\tikzstyle{dmaptrans}=[draw,doubled,shape=SWbox,inner sep=2pt,minimum height=6mm,fill=white]
\tikzstyle{dmapconj}=[draw,doubled,shape=NWbox,inner sep=2pt,minimum height=6mm,fill=white]
\tikzstyle{ddmap}=[draw,doubled,dashed,shape=NEbox,inner sep=2pt,minimum height=6mm,fill=white]
\tikzstyle{ddmapdag}=[draw,doubled,dashed,shape=SEbox,inner sep=2pt,minimum height=6mm,fill=white]
\tikzstyle{ddmapadj}=[draw,doubled,dashed,shape=SEbox,inner sep=2pt,minimum height=6mm,fill=white]
\tikzstyle{ddmaptrans}=[draw,doubled,dashed,shape=SWbox,inner sep=2pt,minimum height=6mm,fill=white]
\tikzstyle{ddmapconj}=[draw,doubled,dashed,shape=NWbox,inner sep=2pt,minimum height=6mm,fill=white]
\tikzstyle{smap}=[draw,shape=sNEbox,fill=white]
\tikzstyle{smapdag}=[draw,shape=sSEbox,fill=white]
\tikzstyle{smapadj}=[draw,shape=sSEbox,fill=white]
\tikzstyle{smaptrans}=[draw,shape=sSWbox,fill=white]
\tikzstyle{smapconj}=[draw,shape=sNWbox,fill=white]
\tikzstyle{dsmap}=[draw,dashed,shape=sNEbox,fill=white]
\tikzstyle{dsmapdag}=[draw,dashed,shape=sSEbox,fill=white]
\tikzstyle{dsmaptrans}=[draw,dashed,shape=sSWbox,fill=white]
\tikzstyle{dsmapconj}=[draw,dashed,shape=sNWbox,fill=white]
\tikzstyle{mmap}=[draw,shape=mNEbox]
\tikzstyle{mmapdag}=[draw,shape=mSEbox]
\tikzstyle{mmaptrans}=[draw,shape=mSWbox]
\tikzstyle{mmapconj}=[draw,shape=mNWbox]
\tikzstyle{mmapgray}=[draw,fill=gray!40!white,shape=mNEbox]
\tikzstyle{smapgray}=[draw,fill=gray!40!white,shape=sNEbox]
\pgfmathsetmacro{\pgf@shorten@left}{\pgfkeysvalueof{/tikz/shorten left}}
\pgfmathsetmacro{\pgf@shorten@right}{\pgfkeysvalueof{/tikz/shorten right}}
\pgfmathsetmacro{\pgf@shorten@left}{\pgfkeysvalueof{/tikz/shorten left}}
\pgfmathsetmacro{\pgf@shorten@right}{\pgfkeysvalueof{/tikz/shorten right}}
\tikzstyle{kpoint common}=[draw,fill=white,inner sep=1pt,minimum height=4mm]
\tikzstyle{kpoint sc}=[shape=cornerpoint,kpoint common]
\tikzstyle{kpoint adjoint sc}=[shape=cornercopoint,kpoint common]
\tikzstyle{kpoint}=[shape=cornerpoint,shorten left=5pt,kpoint common]
\tikzstyle{kpoint adjoint}=[shape=cornercopoint,shorten left=5pt,kpoint common]
\tikzstyle{kpoint conjugate}=[shape=cornerpoint,shorten right=5pt,kpoint common]
\tikzstyle{kpoint transpose}=[shape=cornercopoint,shorten right=5pt,kpoint common]
\tikzstyle{kpoint symm}=[shape=cornerpoint,shorten left=5pt,shorten right=5pt,kpoint common]
\tikzstyle{wide kpoint sc}=[shape=cornerpoint,kpoint common, minimum width=1 cm]
\tikzstyle{wide kpointdag sc}=[shape=cornercopoint,kpoint common, minimum width=1 cm]
\tikzstyle{black kpoint}=[shape=cornerpoint,shorten left=5pt,kpoint common,fill=black,font=\color{white}]
\tikzstyle{black kpoint sm}=[shape=cornerpoint,shorten left=5pt,kpoint common,fill=black,font=\color{white},scale=0.75]
\tikzstyle{black kpoint adjoint}=[shape=cornercopoint,shorten left=5pt,kpoint common,fill=black,font=\color{white}]
\tikzstyle{black kpointadj}=[shape=cornercopoint,shorten left=5pt,kpoint common,fill=black,font=\color{white}]
\tikzstyle{black kpointadj sm}=[shape=cornercopoint,shorten left=5pt,kpoint common,fill=black,font=\color{white},scale=0.75]
\tikzstyle{black dkpoint}=[shape=cornerpoint,shorten left=5pt,kpoint common,fill=black, doubled,font=\color{white}]
\tikzstyle{black dkpoint adjoint}=[shape=cornercopoint,shorten left=5pt,kpoint common,fill=black, doubled,font=\color{white}]
\tikzstyle{black dkpointadj}=[shape=cornercopoint,shorten left=5pt,kpoint common,fill=black, doubled,font=\color{white}]
\tikzstyle{black dkpoint sm}=[shape=cornerpoint,shorten left=5pt,kpoint common,fill=black, doubled,font=\color{white},scale=0.75]
\tikzstyle{black dkpointadj sm}=[shape=cornercopoint,shorten left=5pt,kpoint common,fill=black, doubled,font=\color{white},scale=0.75]
\tikzstyle{kpointdag}=[kpoint adjoint]
\tikzstyle{kpointadj}=[kpoint adjoint]
\tikzstyle{kpointconj}=[kpoint conjugate]
\tikzstyle{kpointtrans}=[kpoint transpose]
\tikzstyle{big kpoint}=[kpoint, minimum width=1.2 cm, minimum height=8mm, inner sep=4pt, text depth=3mm]
\tikzstyle{wide kpoint}=[kpoint, minimum width=1 cm, inner sep=2pt]%, text depth=-0.7 mm]
\tikzstyle{wide kpointdag}=[kpointdag, minimum width=1 cm, inner sep=2pt]%, text depth=0.7 mm]
\tikzstyle{wide kpointconj}=[kpointconj, minimum width=1 cm, inner sep=2pt]%, text depth=-0.7 mm]
\tikzstyle{wide kpointtrans}=[kpointtrans, minimum width=1 cm, inner sep=2pt]%, text depth=0.7 mm]
\tikzstyle{wider kpoint}=[kpoint, minimum width=1.25 cm, inner sep=2pt]%, text depth=-0.7 mm]
\tikzstyle{wider kpointdag}=[kpointdag, minimum width=1.25 cm, inner sep=2pt]%, text depth=0.7 mm]
\tikzstyle{wider kpointconj}=[kpointconj, minimum width=1.25 cm, inner sep=2pt]%, text depth=-0.7 mm]
\tikzstyle{wider kpointtrans}=[kpointtrans, minimum width=1.25 cm, inner sep=2pt]%, text depth=0.7 mm]
\tikzstyle{gray kpoint}=[kpoint,fill=gray!50!white]
\tikzstyle{gray kpointdag}=[kpointdag,fill=gray!50!white]
\tikzstyle{gray kpointadj}=[kpointadj,fill=gray!50!white]
\tikzstyle{gray kpointconj}=[kpointconj,fill=gray!50!white]
\tikzstyle{gray kpointtrans}=[kpointtrans,fill=gray!50!white]
\tikzstyle{gray dkpoint}=[kpoint,fill=gray!50!white,doubled]
\tikzstyle{gray dkpointdag}=[kpointdag,fill=gray!50!white,doubled]
\tikzstyle{gray dkpointadj}=[kpointadj,fill=gray!50!white,doubled]
\tikzstyle{gray dkpointconj}=[kpointconj,fill=gray!50!white,doubled]
\tikzstyle{gray dkpointtrans}=[kpointtrans,fill=gray!50!white,doubled]
\tikzstyle{white label}=[draw,fill=white,rectangle,inner sep=0.7 mm]
\tikzstyle{gray label}=[draw,fill=gray!50!white,rectangle,inner sep=0.7 mm]
\tikzstyle{black label}=[draw,fill=black,rectangle,inner sep=0.7 mm]
\tikzstyle{dkpoint}=[kpoint,doubled]
\tikzstyle{wide dkpoint}=[wide kpoint,doubled]
\tikzstyle{dkpointdag}=[kpoint adjoint,doubled]
\tikzstyle{wide dkpointdag}=[wide kpointdag,doubled]
\tikzstyle{dkcopoint}=[kpoint adjoint,doubled]
\tikzstyle{dkpointadj}=[kpoint adjoint,doubled]
\tikzstyle{dkpointconj}=[kpoint conjugate,doubled]
\tikzstyle{dkpointtrans}=[kpoint transpose,doubled]
\tikzstyle{kscalar}=[kpoint common, shape=EBox, inner xsep=-1pt, inner ysep=3pt,font=\small]
\tikzstyle{kscalarconj}=[kpoint common, shape=WBox, inner xsep=-1pt, inner ysep=3pt,font=\small]
\tikzstyle{spekpoint}=[kpoint sc,minimum height=5mm,inner sep=3pt]
\tikzstyle{spekcopoint}=[kpoint adjoint sc,minimum height=5mm,inner sep=3pt]
\tikzstyle{dspekpoint}=[spekpoint,doubled]
\tikzstyle{dspekcopoint}=[spekcopoint,doubled]
 \tikzstyle{upground}=[circuit ee IEC,thick,ground,rotate=90,scale=2.5]
 \tikzstyle{downground}=[circuit ee IEC,thick,ground,rotate=-90,scale=2.5]
 \tikzstyle{bigground}=[regular polygon,regular polygon sides=3,draw=gray,scale=0.50,inner sep=-0.5pt,minimum width=10mm,fill=gray]
\tikzstyle{arrs}=[-latex,font=\small,auto]
\tikzstyle{arrow plain}=[arrs]
\tikzstyle{arrow dashed}=[dashed,arrs]
\tikzstyle{arrow bold}=[very thick,arrs]
\tikzstyle{arrow hide}=[draw=white!0,-]
\tikzstyle{arrow reverse}=[latex-]
\tikzstyle{cdnode}=[]
\tikzstyle{discarding}=[fill=white, draw=black, shape=circle, style=upground]
\tikzstyle{smalldiscarding}=[fill=white, draw=black, style=upground, scale=0.5]
\tikzstyle{backdiscard}=[fill=white, draw=black, shape=circle, style=downground, scale=0.5]
\tikzstyle{smallbackdiscard}=[fill=white, draw=black, shape=circle, style=downground, scale=0.5]
\tikzstyle{state}=[fill=white, draw=black, style=triang, tikzit shape=rectangle]
\tikzstyle{kstate}=[fill=white, draw=black, style=kpoint, tikzit shape=rectangle]
\tikzstyle{kstateconj}=[fill=white, draw=black, style=kpoint conjugate, tikzit shape=rectangle]
\tikzstyle{kstateBIG}=[fill=white, draw=black, style=big kpoint, tikzit shape=rectangle]
\tikzstyle{effect}=[fill=white, draw=black, style=triangdag]
\tikzstyle{keffect}=[fill=white, draw=black, style=kpoint adjoint]
\tikzstyle{keffectconj}=[fill=white, draw=black, style=kpoint transpose]
\tikzstyle{morphdag}=[style=mapdag]
\tikzstyle{morph}=[style=hadamard]
\tikzstyle{WIDEmorph}=[style=hadamard, minimum width=14mm]
\tikzstyle{morphtrans}=[style=maptrans]
\tikzstyle{morphconj}=[style=mapconj]
\tikzstyle{CPMmorph}=[style=dmap]
\tikzstyle{CPMmorphconj}=[style=dmapconj]
\tikzstyle{CPMmorphdag}=[style=dmapdag]
\tikzstyle{CPMmorphtrans}=[style=dmaptrans]
\tikzstyle{CPMstate}=[fill=white, draw=black, style=triang, doubled]
\tikzstyle{CPMstateBIG}=[fill=white, draw=black, style={triang_lesssep}, doubled]
\tikzstyle{CPMkstate}=[fill=white, draw=black, style=kpoint, tikzit shape=rectangle, doubled]
\tikzstyle{CPMkstateconj}=[fill=white, draw=black, style=kpoint conjugate, tikzit shape=rectangle, doubled]
\tikzstyle{CPMkstateBIG}=[fill=white, draw=black, style=big kpoint, tikzit shape=rectangle, doubled]
\tikzstyle{CPMkeffect}=[fill=white, draw=black, style=kpoint adjoint, doubled]
\tikzstyle{CPMkeffectconj}=[fill=white, draw=black, style=kpoint transpose, doubled]
\tikzstyle{UHfB}=[fill=white, draw=black, style=triangdag, doubled, inner sep=-2pt]
\tikzstyle{leak}=[style=tinypoint, regular polygon rotate=-90]
\tikzstyle{leakfill}=[style=tinypoint, regular polygon rotate=-90, fill=black]
\tikzstyle{Z}=[style=dot, fill=green]
\tikzstyle{X}=[style=dot, fill=red]
\tikzstyle{black_dot}=[style=dot, fill=black]
\tikzstyle{white_dot}=[style=dot, fill=white]
\tikzstyle{qblack_dot}=[style=ddot, fill=black]
\tikzstyle{qwhite_dot}=[style=ddot, fill=white]
\tikzstyle{whitephase}=[style=wphase dot, fill=white]
\tikzstyle{qredphase}=[style=phase dot, fill=red]
\tikzstyle{qgreenphase}=[style=phase dot, fill=green]
\tikzstyle{had}=[style=hadamard, doubled]
\tikzstyle{box}=[style=hadamard]
\tikzstyle{classhad}=[style=hadamard]
\tikzstyle{antipode}=[style=anti]
\tikzstyle{dottededge}=[-, dotted]
\tikzstyle{double edge}=[-, style=doubled, draw=black, tikzit draw={rgb,255: red,18; green,168; blue,191}]
\tikzstyle{new edge style 0}=[<-]
\tikzstyle{new edge style 1}=[-, draw={rgb,255: red,150; green,186; blue,125}, fill={rgb,255: red,150; green,186; blue,125}]
\tikzstyle{new edge style 2}=[-, draw={rgb,255: red,66; green,5; blue,188}]
\tikzstyle{new edge style 3}=[<-, draw={rgb,255: red,223; green,66; blue,126}]
\tikzstyle{new edge style 4}=[<-, draw={rgb,255: red,0; green,128; blue,128}]
\tikzstyle{new edge style 5}=[-, draw={rgb,255: red,227; green,64; blue,9}]
\tikzstyle{new edge style 6}=[-, draw={rgb,255: red,174; green,20; blue,174}]
\tikzstyle{new edge style 7}=[<-]
\tikzstyle{new edge style 8}=[<-, draw={rgb,255: red,227; green,64; blue,9}]
\newcommand{\tr}{\mathrm}
\newcommand{\tikzfigscale}[2]{\scalebox{#1}{\tikzfig{#2}}}
\def\be{\begin{equation}}
\def\ee{\end{equation}}
\def\ba{\begin{align}}
\def\ea{\end{align}}
\newcommand{\cat}[1]{\ensuremath{\mathbf{#1}}\xspace}
\newcommand{\FHilb}{\cat{FHilb}}
\newcommand{\Rel}{\cat{Rel}}
\newcommand{\CPM}{\ensuremath{\mathrm{CPM}}\xspace}
\newcommand{\id}[1][]{\ensuremath{1_{#1}}}
\newcommand{\ce}{\mathcal E}
\newcommand{\ch}{\mathcal H}
\newcommand{\cl}{\mathcal L}
\newcommand{\cp}{\mathcal P}
\def\tl{\tilde}
\newcommand{\discard}{ 
\mathbin{ \begin{tikzpicture}[scale=0.85, {every node/.style}={scale=0.4},baseline=0.3mm]
	\begin{pgfonlayer}{nodelayer}
		\node [style=smalldiscarding] (140) at (0, 0.25) {};
		\node [style=none] (143) at (0, 0) {};
	\end{pgfonlayer}
	\begin{pgfonlayer}{edgelayer}
		\draw [style=double edge] (140) to (143.center);
	\end{pgfonlayer}
\end{tikzpicture}}  }
\newcommand{\identity}{ 
\mathbin{ \begin{tikzpicture}[scale=0.85, {every node/.style}={scale=0.4},baseline=0.3mm]
	\begin{pgfonlayer}{nodelayer}
		\node [style=none] (140) at (0, 0.325) {};
		\node [style=none] (143) at (0, 0) {};
	\end{pgfonlayer}
	\begin{pgfonlayer}{edgelayer}
		\draw [style=double edge] (140.center) to (143.center);
	\end{pgfonlayer}
\end{tikzpicture}}  }
\def\tl{\tilde}
\newtheorem{definition}{Definition}
\newtheorem{theorem}{Theorem}
\newtheorem{lemma}{Lemma}
\newtheorem{example}{Example}
\tikzstyle{every picture}=[baseline=-0.25em,shorten <=-0.1pt]
\tikzstyle{dotpic}=[scale=0.5]
\tikzstyle{braceedge}=[decorate,decoration={brace,amplitude=1mm,raise=-1mm}]
\tikzstyle{dot}=[inner sep=0.7mm,minimum width=0pt,minimum height=0pt,fill=black,draw=black,shape=circle]
\tikzstyle{small dot}=[inner sep=0.1mm,minimum width=0pt,minimum height=0pt,fill=black,draw=black,shape=circle]
\tikzstyle{black dot}=[dot]
\tikzstyle{white dot}=[dot,fill=white]
\tikzstyle{gray dot}=[dot,fill=gray!40!white]
\tikzstyle{alt white dot}=[white dot,label={[xshift=3mm,yshift=-0.05mm,font=\tiny]left:$*$}]
\tikzstyle{alt gray dot}=[gray dot,label={[xshift=3mm,yshift=-0.05mm,font=\tiny]left:$*$}]
\tikzstyle{white norm}=[rectangle,fill=white,draw=black,minimum height=2mm,minimum width=2mm,inner sep=0pt,font=\small]
\tikzstyle{gray norm}=[white norm,fill=gray!40!white]
\tikzstyle{square box}=[rectangle,fill=white,draw=black,minimum height=5mm,minimum width=5mm,font=\small]
\tikzstyle{square gray box}=[rectangle,fill=gray!30,draw=black,minimum height=6mm,minimum width=6mm]
\tikzstyle{diredge}=[->]
\tikzstyle{rdiredge}=[<-]
\tikzstyle{dashed edge}=[dashed]
\tikzstyle{cross}=[preaction={draw=white, -, line width=3pt}]
\newcommand{\dotdualmult}[1]{%
\!\begin{tikzpicture}[dotpic]
    \node [style=white dot] (0) at (0, 0.3) {};
    \node [style=none] (1) at (-0.5, -0.4) {};
    \node [style=none] (2) at (0.5, -0.4) {};
    \node [style=none] (3) at (0, 0.8) {};
    \draw [style=diredge] (3.center) to (0);
    \draw [style=diredge, in=15, out=-30, looseness=1.50] (0) to (1.center);
    \draw [style=diredge, in=165, out=-150, looseness=1.50] (0) to (2.center);
\end{tikzpicture}\!}
\newcommand{\dotconorm}[1]{%
\,\begin{tikzpicture}[dotpic,yshift=0.4mm]
    \node [style=none] (0) at (0, -0.4) {};
    \node [style=white norm] (1) at (0, 0.1) {};
    \node [style=none] (2) at (0, 0.5) {};
    \draw [style=diredge] (1) to (0.center);
    \draw (2.center) to (1);
\end{tikzpicture}\,}
\newcommand{\astfootnote}[1]{
\let\oldthefootnote=\thefootnote
\setcounter{footnote}{0}
\renewcommand{\thefootnote}{\fnsymbol{footnote}}
\footnote{#1}
\let\thefootnote=\oldthefootnote
}
\title{Composable constraints}
\author{Matt Wilson}
\email{matthew.wilson@cs.ox.ac.uk}
\affiliation{Quantum Group, Department of Computer Science, University of Oxford}
\affiliation{HKU-Oxford Joint Laboratory for Quantum Information and Computation}
\thanks{Both authors contributed equally to this paper; their ordering was decided by the measurement of a superposed state, using the IBM Q machine. In the absence of any access to the IBM Q machine, this measurement was in turn simulated via a classical coin flip, using a one-pound coin.}
\author{Augustin Vanrietvelde}
\email{a.vanrietvelde18@imperial.ac.uk}
\affiliation{Quantum Group, Department of Computer Science, University of Oxford}
\affiliation{HKU-Oxford Joint Laboratory for Quantum Information and Computation}
\affiliation{Blackett Laboratory, Department of Physics, Imperial College London}
\begin{document} \emergencystretch 3em

\maketitle

\begin{abstract}
We introduce a notion of compatibility between constraint encoding and compositional structure. Phrased in the language of category theory, it is given by a \textit{composable constraint encoding}. We show that every composable constraint encoding can be used to construct an equivalent notion of a \textit{constrained category} in which morphisms are supplemented with the constraints they satisfy. We further describe how to express the compatibility of constraints with additional categorical structures of their targets, such as parallel composition, compactness, and time-symmetry. We present a variety of concrete examples. Some are familiar in the study of quantum protocols and quantum foundations, such as signalling and sectorial constraints; others arise by construction from basic categorical notions. We use the language developed to discuss the notion of intersectability of constraints and the simplifications it allows for when present, and to show that any time-symmetric theory of relational constraints admits a faithful notion of intersection.
\end{abstract}

\tableofcontents

\section{Introduction}

A constraint on a thing is a statement that it is in fact a member of some restricted set of things. Whenever the thing lives in a theory with some structure, it is then natural to ask about the compatibility of a given method of constraint encoding with that structure. This paper is concerned with developing a language for talking about such a compatibility in the specific case where the structure at hand is compositional.

A prevalent mathematical language for the expression of compositional structure is that of category theory. Consequently, our goal is to provide a general notion on a category \cat{C} that 1) captures the possibility to impose \textit{constraints} on morphisms of \cat{C}, and 2) allows to compose these constraints, in a way that is compatible with the composition of the morphisms of \cat{C}. By a set of constraints on possible maps $A \overset{f}{\to} B$ in \cat{C}, we mean a piece of data $\lambda$ singling out, among the hom-set $\cat{C}(A,B)$, a subset $\cat{C}_\lambda(A,B)$, whose elements are said to be the maps that \textit{satisfy} these constraints. Now, given two constraints $\lambda$ and $\sigma$ on possible maps $B \overset{g}{\to} C$, we are concerned with the case in which there is a sense in which $\sigma$ and $\lambda$ can be composed to form a set of constraints `$\sigma \circ \lambda$' that is satisfied by $g \circ f$ when $g$ satisfies $\sigma$ and $f$ satisfies $\lambda$, i.e.,
\be \tikzfigscale{1.0}{scheme_1} . \ee 
As we shall see, formally capturing this kind of `structure of constraints', compatible with the structure of morphisms themselves, helps to model scenarios in which constraints have to be taken into account, and allows to pin down the structural features of different types of constraints, defining them in a general way. It can also be used to unlock a handy `constraint calculus': a calculus only performed on the constraints that morphisms satisfy, allowing to deduce properties about their composition while `bypassing' the handling of the (usually more intricate) data about the morphisms themselves.

The need for a formal theory of the compositionality of constraint encoding has recently arisen in a variety of contexts. First, constraints appear in the description of physical, communicational or computational scenarios in which some key operations can be freely chosen, yet can only be picked among a subset of the possible operations due to restrictions arising e.g. from physical constraints, rules of a game, or technological limitations. This is for instance the case for the study of superpositions of channels in quantum theory \cite{chiribella2019shannon,kristjansson2020singleparticle,kristjansson2020resources} -- and more generally for that of the coherent control of gates and channels \cite{aaberg2004subspace, aaberg2004operations, oi2003interference, araujo2014, friis2014, Friis_2015, Dunjko_2015, thompson2018, dong2019controlled, abbott2020communication}  --, a notion whose formal definition is a subtle matter, and for which a recently proposed formalism \cite{vanrietvelde2020routed, vanrietvelde2021coherent} makes a crucial use of so-called \textit{sectorial constraints} on morphisms. In another line of research \cite{Allen2017, barrett2019, lorenz2020,barrett2020cyclic}, it has been proposed to describe the causal structure of unitary channels using sets of `no-influence relations' used to encode constraints which forbid certain input factors of channels from signalling to certain output factors. In each of these cases, the constraints imposed are encoded in some way by relations as depicted in the following diagrams:
\begin{equation}\label{eq:sector1}
\tikzfigscale{1}{sector_signal} \,.
\end{equation}
Moreover, when processes satisfy such constraints, their compositions are guaranteed to satisfy the constraints encoded by the composition of the relations encoding those constraints. One may wonder whether there exists a formal notion of composable constraint which encompasses them both, and if there might exist a single construction method from which either can be produced. We will find that both questions can be answered positively. A further notion, familiar in theoretical computer science, is that of a constraint satisfaction problem. Given a pair of functions, each of which presents a solution to a constraint satisfaction problem, one may ask about the existence of a constraint satisfaction problem which is guaranteed to be solved by the composition of those functions.
%As we will show, this can be recast as a use of another type of constraints, \textit{signalling constraints}. We shall recover sectorial and signalling constraints in this paper as specific instances of a general categorical class of constraints on semicartesian categories, \textit{relational constraints}. 

Furthermore, introducing constraints can be viewed as a way of enriching the structure of a given category, in order to make it expressive enough to capture some notions in an elegant and consistent way; this has been the case in the study of so-called \textit{causal decompositions}, i.e. diagrammatic decompositions of unitary channels that are equivalent to these channels' causal structure \cite{Allen2017, barrett2019, lorenz2020,barrett2020cyclic} (see in particular \cite{lorenz2020}). Indeed, some causal decompositions cannot be written in terms of standard circuits, but only using more elaborate circuits (later called \textit{index-matching circuits} \cite{vanrietvelde2020routed}), which relied on constraints and whose exact semantics remained unclear\footnote{See Appendix B of \cite{vanrietvelde2020routed} for a discussion of why other standard categorical constructions, CP*[\FHilb] and Karoubi[CPM[\FHilb]], cannot be used either to model superpositions of paths and causal decompositions.}. 

In order to illustrate in more detail an example of the kind of composable constraint behaviour we want to describe and give some intuition, let us quickly present the study of sectorial constraints. The point of Ref.\ \cite{vanrietvelde2020routed} is, in the context of the category \FHilb of linear maps on finite-dimensional Hilbert spaces, to build a theory encompassing \textit{sectorial constraints} on these linear maps. Sectorial constraints express the fact that a given linear map is forbidden to relate some sectors (i.e. orthogonal subspaces) of its domain with some sectors of its codomain. For example, the set of links in the left of the following figure expresses a set of sectorial constraints on a map $f \in \FHilb(A,B)$, where $A$ and $B$ are partitioned into a direct sum of sectors:

\begin{equation}\label{eq:sector1}
\tikzfigscale{1}{scheme_2} \, .
\end{equation}

In this figure, the sectorial constraints correspond to the absence of links between some sectors. For instance, the absence of links from $A_1$ to $B_2$, $B_3$ and $B_4$ means that, for a $f$ following these constraints, one has $f(A_1) \subseteq B_1$; the same goes with the other sectors.

However, as we already mentioned, when we talk about a framework `including constraints', we do not just mean to allow for the possibility to add these constraints `by hand' on some morphisms; what we want is a theory in which one can \textit{compose} these constraints in various ways, compatibly with the structure of the original category. An example will make this point clearer. For any two linear maps $A \overset{f}{\to} B \overset{g}{\to} C$ following sectorial constraints given by
\begin{equation}\label{eq:sector1}
\tikzfigscale{1}{scheme_2_1} \,,
\end{equation}
it is easy to deduce a set of sectorial constraints which $g \circ f$ will necessarily follow by constructing a link from each connected path, as depicted in the following figure:
\begin{equation}\label{eq:sector1}
\tikzfigscale{1}{scheme_3} \, .
\end{equation}

%\begin{equation}\label{eq:sector2}
%\tikzfigscale{1}{sector2} \, .
%\end{equation}

In other words, the constraints themselves feature some structure (here, a composition), and, crucially, this structure is compatible with that of the underlying category: if two maps each follow a set of constraints, then their composition follows the composition of these sets of constraints. In fact, sectorial constraints exhibit a whole dagger compact structure -- that of finite relations --, completely consistent with the dagger compact structure of \FHilb. It is this kind of structural compatibility that we want to describe and exploit fully, whenever some notion of constraints features it.

\cite{vanrietvelde2020routed} provided a well-defined formal account of sectorial constraints; the structural and categorical features of this account, however, were toned down in order to make it suitable to working physicists. The framework we will present here aims to be a way more general, and fully structural, account, able to model the inclusion of, and reasoning about, constraints on morphisms in various theoretical contexts. After formalising the notion of a composable constraint as a lax functor $\mathcal{L}$ with a particular domain, we show that any such  lax functor can be used to construct a new \textit{constrained} category $\cat{C}_{\mathcal{L}}$ in which morphisms and their constraints are manipulated together as pairs $(\lambda, f)$ in which $\lambda$ (in \cat{Con}) is called the \textit{constraint}, and $f$ (in \cat{C}) satisfies the constraint $\lambda$ -- in the sense that $f \in \mathcal{L}(\lambda)$. Defining all operations pairwise, suitably \textit{monoidal} (or $\dagger$ or \textit{compact closed}, etc.) properties of such a lax-functor $\mathcal{L}$ lift to properties of the constrained category. In the constrained category, the whole calculus is thus doubled, and performed in parallel in \cat{C} on the one hand, and in \cat{Con} on the other hand, giving a circuit (or string, etc.) language for morphisms equipped with constraints:

\be \tikzfigscale{0.9}{consistent}\ee

The structure of this paper is as follows. 
\begin{itemize}
    \item (Section \ref{sec:definitions}) First, we present a very general notion of composable (monoidal) constraint on a category and how it leads to the construction of constrained (monoidal) categories. We formalise the equivalence between the former and the latter concepts within the language of category theory.
    \item (Section \ref{sec:examples}) After presenting examples of \textit{thin} constraints, familiar in standard category theory, we then observe that the two motivating instances of constraints in quantum theory, those of \textit{sectorial constraints} and \textit{signalling constraints} are in fact instances of applying the same unified construction of \textit{relational constraints} to the direct sum $\oplus$ and the tensor product $\otimes$ respectively. The features of relational constraints, and in particular the consequences of time-symmetry for them, are then explored.
    \item (Section \ref{sec:Intersectable Constraints}) We discuss the important structural feature of \textit{intersectability} in constraint theory, showing that in the abstract it leads to practical simplifications for constraint reasoning, and that for relational constraints it is in fact entailed by time-symmetry.
    \item (Section \ref{sec:General Constructions}) We finally present more abstract and general constructions leading to theories of constraints: a generalised approach to relational constraints, and a category of constraint satisfaction problems.
\end{itemize}

\section{Composable Constraint Encoding} \label{sec:definitions}
In this section we present the concept of a composable constraint encoding from a category $\mathbf{Con}$ into a category $\cat{C}$. Our goal is to capture the two basic features of composable constraint encoding in the abstract:
\begin{itemize}
    \item if $f$ satisfies $\tau$ and $g$ satisfies $\lambda$ then $f \circ g$ satisfies $\tau \circ \lambda$: \be \tikzfigscale{1.0}{scheme_1} \,;\ee
    \item if $\tau$ is a `stronger' constraint than $\lambda$, then whenever $f$ satisfies $\tau$ it must furthermore be true that $f$ satisfies $\lambda$: \be \tikzfigscale{0.9}{scheme_4} \,. \ee
\end{itemize}
The notion of such a constraint encoding can be captured by the following definition.
\begin{definition}[Composable constraint encoding]
A composable constraint encoding for a category $\cat{C}$ is a lax functor $\mathcal{L}: \mathbf{Con} \longrightarrow \mathcal{P}[\cat{C}]$ from a locally posetal $2$-category $\mathbf{Con}$ into the power set category $ \mathcal{P}[\cat{C}].$ 
\end{definition}
We now unpack this definition, demonstrating that it indeed captures the two prior concepts. Firstly, from a category $\cat{C}$ one may construct a power-set category $\mathcal{P}[\cat{C}]$ into which constraints will be encoded. The point of the power set category (often referred to in the literature as the free quantaloid) is that its morphisms are subsets of morphisms, and so explicitly represent the most primitive notion of a constraint:
\begin{definition}[Power-Set Category]
For any category $\cat{C}$ the category $\mathcal{P}[\cat{C}]$ has objects given by the objects of $\cat{C}$ and for morphisms \be \mathcal{P}[\cat{C}](A,B) := \{S| \quad S \subseteq \cat{C}(A,B)\} \,,\ee  with the identity given by $\{id\}$ and the composition given by \be S \circ T := \{f \circ g| \quad  (f,g) \in S \times T\} \,.\ee 
$\mathcal{P}[\cat{C}]$ may furthermore be viewed as a $2$-Category (more precisely, a locally posetal 2-category) with $2$-Morphisms given by subset inclusion:
\begin{align*}
    \mathcal{P}[\cat{C}](S,T) := & \textrm{ } \{\subseteq \} \textrm{ if } S \subseteq T \,, \\
    & \textrm{  } \emptyset \quad otherwise.
\end{align*}
\end{definition}
Morphisms of $\cp[\mathbf{C}]$ may be represented with string diagrams such as
\be \tikzfigscale{1.0}{power_1} , \ee 
in terms of which an example of the composition rule is given by
\be \tikzfigscale{1.0}{power_2} . \ee 
For a category $\mathbf{Con}$ to encode constraints for $\mathbf{C}$, there should exist an encoding $\mathcal{L}(-)$ which maps each morphism $\tau$ of $\mathbf{Con}$ into a morphism $\mathcal{L}(\tau)$ of $\cp[\mathbf{C}]$, that is, to a subset $\mathcal{L}(\tau)$ of the possible morphisms of $\mathbf{C}$. In string diagrammatic language one may represent the action of $\mathcal{L}$ on the morphism $\tau$ by \be \tikzfigscale{1.0}{scheme_lax} , \ee anticipating that the shaded box will have properties similar to those of functor boxes \cite{functor_boxes}. Taking $f \in \mathcal{L}(\tau)$ to mean `$f$ satisfies $\tau"$  it is then expected that whenever $f \in \mathcal{L}(\tau)$ and $g \in \mathcal{L}(\lambda)$ then $f \circ g$ (which is for sure a member of $\mathcal{L}(\tau) \circ \mathcal{L}(\lambda)$ in $\mathcal{P}[\mathbf{C}]$) should be a member of $\mathcal{L}(\tau \circ \lambda)$. Secondly, it is expected that $\mathbf{Con}$ should be equipped with a notion of relaxation $\tau \leq \lambda$ expressing that the constraint $\tau$ is stronger than the constraint $\lambda$. For such a relaxation to be meaningful, the encoding $\mathcal{L}$ should preserve it, so that being a member of $\mathcal{L}(\tau)$ is a stronger condition than being a member of $\mathcal{L}(\lambda)$. Any lax functor \be \mathcal{L}: \mathbf{Con} \longrightarrow \mathcal{P}[\cat{C}] \ee  from a locally posetal $2$- category $\mathbf{Con}$ into $\mathcal{P}[\mathbf{C}]$ formally expresses each of these notions in the language of category theory\footnote{The authors are extremely grateful for the insight of an anonymous reviewer at ACT 2021 who pointed out the phrasing of composable constraints via lax-ness.}; indeed, a lax-functor  of locally posetal $2$-categories with codomain $\cp[\mathbf{C}]$ is precisely a mapping such that:
\begin{itemize}
    \item $\mathcal{L}(\tau) \circ \mathcal{L}(\lambda) \subseteq \mathcal{L}(\tau \circ \lambda) $, written graphically by box-merging: \be \tikzfigscale{1.0}{scheme_5} \,; \ee 
    \item $\tau \leq \lambda \implies \mathcal{L}(\tau) \subseteq \mathcal{L}(\lambda) \,.$
\end{itemize}
Note in particular that if $f \in \mathcal{L}(\tau)$ and $g \in \mathcal{L}(\lambda)$, then the composition $f \circ g \in \mathcal{L}(\tau) \circ \mathcal{L}(\lambda) \subseteq \mathcal{L}(\tau \circ \lambda)$. Graphically this can be seen by the following steps: \be \tikzfigscale{1.0}{scheme_5_1} \,. \ee
%\begin{definition}[Composable constraint encoding]
%A composable constraint encoding for a category $\cat{C}$ is a lax functor $\mathcal{L}: \mathbf{Con} \longrightarrow \mathcal{P}[\cat{C}]$ from a locally posetal $2$-category $\mathbf{Con}$ into the power set category $ \mathcal{P}[\cat{C}].$ 
%\end{definition}

%Whilst such a functor is by assumption only lax in general, it is functorial of the structural isomorphisms, in other words it is the case that in fact \be \mathcal{L}(id) \circ \mathcal{L}(id) = \mathcal{L}(id)\ee  since using first $id \in \mathcal{L}(id)$ and secondly lax-ness of $\mathcal{L}$ it follows that $ \mathcal{L}(id) \subseteq \mathcal{L}(id) \circ \mathcal{L}(id) \subseteq \mathcal{L}(id)$.

\subsection{Constrained categories}
From any composable constraint encoding (from now on termed composable constraint for brevity) $\mathcal{L}$, a category $\mathbf{C}_{\mathcal{L}}$ can be constructed in which the constraints of $\mathbf{Con}$  and the morphisms of $\cat{C}$  satisfying them live together, so that the diagrams of $\mathbf{C}_{\mathcal{L}}$ immediately capture a side-by-side calculus of constraints and morphisms.
\begin{definition}[Category of Constrained Morphisms]
Let $\mathcal{L}: \mathbf{Con} \longrightarrow \mathcal{P}[\cat{C}]$ be a composable constraint. The constrained category $\cat{C}_{\mathcal{L}}$ has as objects the objects of $\mathbf{Con} \times \mathbf{C}$ of the form $(a,\mathcal{L}(a))$ and as hom-sets \be \cat{C}_{\mathcal{L}}(a,b) := \{(\tau,f)| f \in \mathcal{L}(\tau:a \rightarrow b)\}\,,\ee  with composition defined component-wise by $(\tau,f) \circ (\lambda,g) := (\tau \circ \lambda, f \circ g)$; the latter morphism is well-defined since by laxity $f \in \mathcal{L}(\tau) $ and $ g \in \mathcal{L}(\lambda) \implies f \circ g \in \mathcal{L}(\tau \circ \lambda)$. The identity is given by $(id,id)$, which is well-defined since $\{id\} \subseteq \mathcal{L}(id)$. Taking the trivial identity-only locally posetal $2$- structure on the morphisms of the category $\mathbf{C}$, $\mathbf{C}_{\mathcal{L}}$ can furthermore be understood as a sub locally posetal category of $\mathbf{Con} \times \mathbf{C}$ where the $2$-morphisms of $\mathbf{C}_{\mathcal{L}}$ are defined by \be (\tau,f) \leq (\lambda,g) \iff \tau \leq \lambda \textrm{ and } f = g\ee 
\end{definition}
As will be expanded upon when circuit compatibility is discussed, the side-by-side calculus is essentially a practical product, in which a circuit theory has been enriched with constraint structure. Graphically, morphisms in the constrained category and their composition can be represented by
\be \tikzfigscale{1.0}{scheme_6} \,. \ee An immediate question is whether such a presentation of a constraint is equivalent to the notion from which it was constructed\footnote{The authors are grateful to James Hefford for asking this question.}.

\subsection{Equivalence between composable constraints and constrained categories}
Constrained categories have three key properties altogether which can be used to define them explicitly. In the following, $\mathbf{C}$ is identified with the trivial 2-category in which each homset $\mathbf{C}(A,B)$ is viewed as a discrete category.
\begin{definition}
A constrained category is a sub locally posetal category $\mathbf{M} \subseteq \mathbf{Con}\times \mathbf{C}$ equipped with a function $M$ such that:
\begin{itemize}
    \item every object $(x,y)$ of $\mathbf{M}$ has the form $(x,M(x))$;
    \item for every pair of objects $w,z$ the induced inclusion functor $\mathbf{M}(w,z) \subseteq_{w,z} \mathbf{Con}\times \mathbf{C}(w,z)$ is a discrete opfribration.
\end{itemize}
\end{definition}

These conditions are sufficient to recover the composable constraint from which a constrained category was constructed. This equivalence between constrained categories and composable constraints is conveniently expressed as an equivalence between the category of composable constraints and the category of constrained categories; we thus begin by defining those two categories, using the notation $\mathcal{L}:\mathbf{X} \rightsquigarrow \mathbf{Y}$ for a lax functor $\mathcal{L}:\mathbf{X} \rightarrow \cp[\mathbf{Y}]$.
\begin{definition}
The category $\mathbf{compCON}$ of composable constraints is given by taking, for any pair $\mathcal{L}:\mathbf{X} \rightsquigarrow \mathbf{Y}$ and $\mathcal{M}:\mathbf{Y} \rightsquigarrow \mathbf{Z}$, $\mathcal{M} \square \mathcal{L} (x) := \mathcal{M}(\mathcal{L}(x))$ on objects and $\mathcal{M} \square \mathcal{L}(\tau) = \cup_{\kappa \in \mathcal{L}(\tau)} \mathcal{M}(\kappa)$ on morphisms. This composition has identity $\mathcal{I}:\mathbf{X} \rightsquigarrow \mathbf{X}$ given by $\mathcal{I}(x) = x$ and $\mathcal{I}(f) = \{f\}$,  and the composition $\square$ is associative since 
\begin{align*}
    \mathcal{N} \square (\mathcal{M} \square \mathcal{L}) (\tau) :=  \cup_{\nu \in \mathcal{M} \square \mathcal{L}(\tau)} \mathcal{N}(\nu) \\
    = \cup_{\nu \in \cup_{\kappa \in \mathcal{L}(\tau)} \mathcal{M}(\kappa)} \mathcal{N}(\nu) \\
    = \cup_{\kappa \in \mathcal{L}(\tau)} \cup_{\eta \in \mathcal{M}(\kappa)} \mathcal{N}(\eta) \\
    = \cup_{\kappa \in \mathcal{L}(\tau)} \mathcal{N} \square \mathcal{M} (\mu)\\
    = (\mathcal{N} \square \mathcal{M}) \square \mathcal{L}(\tau)
\end{align*}
On $2$-Morphisms, $\mathcal{M} \square \mathcal{L}$ is defined by taking each morphism $\tau \leq \tau'$ to the inclusion $\mathcal{M} \square \mathcal{L} (\tau) \subseteq \mathcal{M} \square \mathcal{N} (\tau')$, which is guaranteed to exist since
\begin{align*}
    \tau \leq \tau' & \implies \mathcal{L}(\tau) \subseteq \mathcal{L}(\tau') \\
    & \implies \cup_{\kappa \in \mathcal{L}(\tau)} \mathcal{M}(\kappa) \subseteq  \cup_{\kappa \in \mathcal{L}(\tau')} \mathcal{M}(\kappa) \\
    & \implies \mathcal{M} \square \mathcal{L} (\tau) \subseteq \mathcal{M} \square \mathcal{N} (\tau')
\end{align*}
\end{definition}

We now turn to constrained categories. They are essentially a special class of relations internal to $\mathbf{CAT}$, with the relation action at the level of objects being single valued. Unsurprisingly, the composition of constrained categories is essentially the relational composition between sets of morphisms.
\begin{definition}
The category $\mathbf{conCAT}$ of constrained categories has categories as its objects, and morphisms $\mathbf{M}: \mathbf{X} \rightsquigarrow \mathbf{Y}$ given by constrained categories $\mathbf{M} \subseteq \mathbf{X} \times \mathbf{Y}$. Composition is given by taking $\mathbf{N} \triangle \mathbf{M}$ to be the subcategory of $\mathbf{X} \times \mathbf{Z}$ equipped with function $N \circ M$ with morphism $(f,h)$ existing if there exists some $g$ with $(f,g)$ in $\mathbf{M}$ and $(g,h)$ in $\mathbf{N}$. The identity $\mathcal{I}:\mathbf{X} \rightsquigarrow \mathbf{X}$ is given by the diagonal category $\triangle_X$, whose morphisms take the form $(f,f)$. Composition is easily checked to be associative:
\begin{align*}
    & (f,k)\in (N \triangle M) \triangle L[ (w,z),(w',z')]  \\
    \iff & \exists g:x \rightarrow x' \textrm{ s.t:  } (f,g)  \in L[ (w,x),(w',x')] \\ 
    & \wedge  (g,k) \in N \triangle M[(x,z),(x',z')] \\
    \iff & \exists (g,h):(x,y) \rightarrow (x',y') \textrm{ s.t:  } (f,g) \in L [ (w,x),(w',x')] \\
    & \wedge   (g,h) \in M [ (x,y),(x',y')] \\ 
    &\wedge 
     (h,k) \in N[(y,z),(y',z')] \\
     \iff & \exists h:y \rightarrow y' \textrm{ s.t:  } (f,h) \in M \triangle N [ (w,y),(w',y')] \\
    & \wedge   (h,k) \in N [ (y,z),(y',z')] \\
    \iff & (f,k)\in N \triangle (M \triangle L)[ (w,z),(w',z')] \,.
\end{align*}
Finally, the inclusion functor $\subseteq_{xy}$ for the composition $N \triangle M$ is a discrete opfibration since:
\begin{align*}
& (\alpha,f) \in \mathcal{N}  \triangle \mathcal{M} [(x,N(M(x))) , (y,N(M(y)))]  \textrm{ with } \alpha \leq \beta \\
\implies & \textrm{there exists } (\alpha,\lambda) \in \mathcal{M}[(x,M(x)) , (y,(My))] \\
& \textrm{and } (\lambda,f) \in \mathcal{N}[(M(x),N(M(x))) , (M(y),N(M(y)))] \textrm{ with } \alpha \leq \beta \\
\implies &  \textrm{there exists } (\beta,\lambda) \in \mathcal{M}[(x,M(x)) , (y,(My))] \\
& \textrm{and } (\lambda,f)\in \mathcal{N}[(M(x),N(M(x))) , (M(y),N(M(y)))]  \\
\implies & (\beta,f) \in \mathcal{N}  \triangle \mathcal{M} [(x,N(M(x))) , (y,N(M(y)))]
\end{align*}
\end{definition}
The assignment $\mathbf{C}_{\bullet}$ is functorial, with a mutually inverse functor $\mathcal{L}_{\bullet}$.
\begin{theorem}
The assignment $\mathbf{C}_{\bullet}: \mathbf{compCON} \longrightarrow \mathbf{conCAT}$ given on objects by $\mathbf{C}_{\bullet}(A) = A$ and on morphisms by $\mathbf{C}_{\bullet}(\mathcal{L}) = \mathbf{C}_{\mathcal{L}}$ is a functor with a strict inverse.
\end{theorem}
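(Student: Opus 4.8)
The plan is to exhibit the promised inverse functor $\mathcal{L}_{\bullet}\colon\mathbf{conCAT}\to\mathbf{compCON}$ explicitly and then verify that both round-trips are literally the identity. First I would define, for a constrained category $\mathbf{M}\subseteq\mathbf{X}\times\mathbf{Y}$ with witnessing function $M$, the assignment $\mathcal{L}_{\mathbf{M}}\colon\mathbf{X}\rightsquigarrow\mathbf{Y}$ by $\mathcal{L}_{\mathbf{M}}(x):=M(x)$ on objects and $\mathcal{L}_{\mathbf{M}}(\tau):=\{\,f\mid(\tau,f)\in\mathbf{M}\,\}$ on morphisms, and check that this is a lax functor into $\mathcal{P}[\mathbf{Y}]$. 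The three clauses of lax-ness correspond one-to-one to the data packaged in a constrained category: closure of $\mathbf{M}$ under (component-wise) composition yields $\mathcal{L}_{\mathbf{M}}(\tau)\circ\mathcal{L}_{\mathbf{M}}(\lambda)\subseteq\mathcal{L}_{\mathbf{M}}(\tau\circ\lambda)$; the presence in $\mathbf{M}$ of the product-category identities, which on the relevant objects are $(\mathrm{id}_a,\mathrm{id}_{M(a)})$, yields $\{\mathrm{id}\}\subseteq\mathcal{L}_{\mathbf{M}}(\mathrm{id})$; and the discrete-opfibration condition on each inclusion $\mathbf{M}(w,z)\subseteq_{w,z}\mathbf{X}\times\mathbf{Y}(w,z)$ yields precisely that $\tau\leq\tau'$ and $(\tau,f)\in\mathbf{M}$ force $(\tau',f)\in\mathbf{M}$, i.e.\ $\mathcal{L}_{\mathbf{M}}(\tau)\subseteq\mathcal{L}_{\mathbf{M}}(\tau')$. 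Recognising this last clause as monotonicity of $\mathcal{L}_{\mathbf{M}}$ is the conceptual core of the argument; it is the only place the opfibration hypothesis is used.

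Next I would check functoriality of both assignments by unravelling $\square$, $\triangle$, and $\mathbf{C}_{(-)}$. For $\mathbf{C}_{\bullet}$: the category $\mathbf{C}_{\mathcal{I}_{\mathbf{X}}}$ has morphisms $(\tau,f)$ with $f\in\{\tau\}$, i.e.\ $f=\tau$, so it is exactly the diagonal $\triangle_{\mathbf{X}}$; and $(\tau,h)$ is a morphism of $\mathbf{C}_{\mathcal{M}\square\mathcal{L}}$ iff $h\in\bigcup_{\kappa\in\mathcal{L}(\tau)}\mathcal{M}(\kappa)$ iff there is $g$ with $(\tau,g)\in\mathbf{C}_{\mathcal{L}}$ and $(g,h)\in\mathbf{C}_{\mathcal{M}}$, which is the hom-description of $\mathbf{C}_{\mathcal{M}}\triangle\mathbf{C}_{\mathcal{L}}$; the object parts ($x\mapsto\mathcal{M}(\mathcal{L}(x))$ on both sides) and the $2$-cells match for the same reasons. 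The mirror-image computations give $\mathcal{L}_{\triangle_{\mathbf{X}}}=\mathcal{I}_{\mathbf{X}}$ and $\mathcal{L}_{\mathbf{N}\triangle\mathbf{M}}=\mathcal{L}_{\mathbf{N}}\square\mathcal{L}_{\mathbf{M}}$. Along the way I would also record that $\mathbf{C}_{\bullet}$ indeed lands in $\mathbf{conCAT}$: $\mathbf{C}_{\mathcal{L}}$ has objects of the form $(a,\mathcal{L}(a))$ by construction, and the opfibration condition for $\mathbf{C}_{\mathcal{L}}$ is exactly the laxness clause $\tau\leq\tau'\Rightarrow\mathcal{L}(\tau)\subseteq\mathcal{L}(\tau')$ of the input.

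Finally I would verify the inverse property on the nose. On objects both functors are the identity, so nothing is needed there. On morphisms, $\mathcal{L}_{\mathbf{C}_{\mathcal{L}}}(\tau)=\{f\mid(\tau,f)\in\mathbf{C}_{\mathcal{L}}\}=\{f\mid f\in\mathcal{L}(\tau)\}=\mathcal{L}(\tau)$, and $\mathcal{L}_{\mathbf{C}_{\mathcal{L}}}$ agrees with $\mathcal{L}$ on objects by definition, so $\mathcal{L}_{\mathbf{C}_{\mathcal{L}}}=\mathcal{L}$; conversely $\mathbf{C}_{\mathcal{L}_{\mathbf{M}}}$ has objects $(a,M(a))$ and morphism-set $\{(\tau,f)\mid f\in\mathcal{L}_{\mathbf{M}}(\tau)\}=\{(\tau,f)\mid(\tau,f)\in\mathbf{M}\}$, hence $\mathbf{C}_{\mathcal{L}_{\mathbf{M}}}=\mathbf{M}$. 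Thus $\mathcal{L}_{\bullet}\circ\mathbf{C}_{\bullet}=\mathrm{id}$ and $\mathbf{C}_{\bullet}\circ\mathcal{L}_{\bullet}=\mathrm{id}$ as functors, giving the strict inverse.

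The main obstacle I anticipate is bookkeeping rather than mathematics: keeping the locally-posetal $2$-structure on the correct factor (it sits on the $\mathbf{Con}$-side, with $\cat{C}$ discrete), reading the constrained-category axioms correctly as conditions on sub-$2$-categories of $\mathbf{Con}\times\cat{C}$, and being honest that ``strict inverse'' is meant in the strong sense of equality of functors — which does hold here precisely because every gadget ($\mathbf{C}_{\mathcal{L}}$, $\mathcal{L}_{\mathbf{M}}$, $\square$, $\triangle$, the power-set category) is defined directly in terms of the same underlying sets of morphisms, so no coherence isomorphisms ever intervene. Everything else is a routine expansion of definitions.
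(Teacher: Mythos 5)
Your proposal is correct and follows essentially the same route as the paper: define $\mathcal{L}_{\mathbf{M}}$ by $\mathcal{L}_{\mathbf{M}}(\tau)=\{f\mid(\tau,f)\in\mathbf{M}\}$, derive laxity from closure of $\mathbf{M}$ under composition, derive monotonicity on $2$-cells from the discrete-opfibration condition, and match $\square$ against $\triangle$ by unwinding the hom-set descriptions. You are somewhat more explicit than the paper on the identity case and on checking the two round-trips on the nose (the paper dismisses these as "easy to see"), but this is the same argument.
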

\begin{proof}
First we prove functoriality: 
\begin{align*}
    & (f,h) \in \mathbf{C}_{\mathcal{M}} \square \mathbf{C}_{\mathcal{L}}[(x,z),(x',z')] \\
    \iff & \exists g \in Y(y,y') \textrm{ s.t } (f,g) \in \mathbf{C}_{\mathcal{L}}[(x,y),(x',y')] \wedge (g,h) \in \mathbf{C}_{\mathcal{M}}[(y,z),(y',z')]\\
    \iff & (f,h) \in \bigcup_{g \in \mathcal{L}(f)} \mathcal{M}(g) \\
    \iff & (f,h) \in \mathbf{C}_{\mathcal{M} \triangle \mathcal{L}} \,.
\end{align*}
It was previously shown that the inclusion functors $\subseteq_{xy}$ for each category $\mathbf{C}_{\mathcal{L}}$ are always discrete opfibrations.
Conversely, each constrained category can be used to define a composable constraint: $\mathcal{L}_{\mathbf{C}}$ is defined on objects by taking $\mathcal{L}_{\mathbf{C}}(x) := \mathbf{C}(x)$, and on morphisms by taking \be \mathcal{L}_{\mathbf{C}}(\tau:x \rightarrow x') := \{f: (\tau,f) \in \mathbf{C}((x,C(x)),(y,C(y)))\} \,. \ee 
Lax functoriality of $\mathcal{L}_{\mathbf{C}}$ on 1-morphisms is guaranteed by the fact the $\mathbf{C}$ is a category. On $2$-morphisms, strict composition preservation is derived from each inclusion $\subseteq_{xy}$ being a discrete opfibration: if $\tau \leq \lambda$, then 
\begin{align*}
    f \in \mathcal{L}_{\mathbf{C}} (\tau:x \rightarrow x') & \iff (\tau,f) \in \mathbf{C}[(x,C(x),(y,C(y))] \\
    & \iff (\lambda,f) \in \mathbf{C}[(x,C(x),(y,C(y))] \\
    & \iff f \in \mathcal{L}_{\mathbf{C}}(\lambda:x \rightarrow x') \,.
\end{align*}
It is easy to see that $\mathcal{L}_{\bullet}$ and $\mathbf{C}_{\bullet}$ are mutually inverse, and so together define a strict equivalence of categories.
\end{proof}
The key takeaway from the above is that one can equivalently view compositonal constraints as either lax functors, or categories where constraints and the morphisms they constrain live in parallel.

\subsection{Full circuit compatibility for constraints}

In order to consider the kind of full circuit compatibility which will for example be observed for sectorial and signalling constraints in quantum circuit theory, and in order to construct constrained circuit calculi, one must take a further step and consider the notion of compatibility with parallel composition (monoidal structure). A monoidal composable constraint is in essence a composable constraint such that, whenever $f$ satisfies $\tau$ and $f'$ satisfies $\tau'$, then $f \otimes f' $ satisfies $\tau \otimes \tau'$;

\be \tikzfigscale{1.0}{scheme_7} . \ee

This basic principle for monoidal compatibility of constraint encoding can be written concisely as

\be (f,f') \in \mathcal{L}(\tau) \times \mathcal{L}(\tau') \implies f \otimes f' \in \mathcal{L}(\tau \otimes \tau') \,.\ee

For any monoidal category $\cat{C}$ the category $ \mathcal{P}[\cat{C}]$ can be viewed as a monoidal category with $S \otimes T := \{f \otimes g | (f,g) \in S \times T\} $, and with coherence isomorphisms inherited as singletons, e.g. $\alpha_{\mathcal{P}[\cat{C}]} := \{\alpha_{\cat{C}}\}$. A graphical example of parallel composition in $\cp[\mathbf{C}]$ is given by \be \tikzfigscale{1.0}{power_3} . \ee From this definition of the monoidal product, it follows that whenever $S \subseteq S'$ and $T \subseteq T'$ then $S \otimes T \subseteq S' \otimes T'$. It is tedious but simple to check that $ \mathcal{P}[\cat{C}]$ in fact forms a locally posetal monoidal $2$-Category \cite{Day1997MonoidalAlgebroids, shulman2010constructing}. The monoidal compatibility condition is then equivalent to requiring that $\mathcal{L}(\tau) \otimes \mathcal{L}(\lambda) \subseteq \mathcal{L} (\tau \otimes \lambda)$. We furthermore require that each coherence morphism such as $\lambda$, being morally the identity, should like the identity satisfy the corresponding constraint $\mathcal{L}(\lambda)$. These concepts, when relaxed to allow for the preservation of monoidal products on objects only-up-to isomorphism $\mathcal{L}(A \otimes B) \cong \mathcal{L}(A) \otimes \mathcal{L}(B)$, motivate the following definition.

\begin{definition}[Monoidal Composable Constraints]
A monoidal composable constraint encoding for a monoidal category $\cat{C}$ is a homomorphism of monoidal bicategories \cite{Day1997MonoidalAlgebroids} \be \mathcal{L}: \mathbf{Con} \longrightarrow \mathcal{P}[\cat{C}]\ee from a locally posetal monoidal $2$-category $\mathbf{Con}$ to $\cp[\mathbf{C}]$, that is, a lax functor equipped with an oplax natural transformation, \be  \phi : \mathcal{L}(-) \otimes \mathcal{L}(-) \Rightarrow \mathcal{L}(- \otimes -) \,, \ee  a morphism $ \phi_o : I \rightarrow \mathcal{L}(I)$ and $2$-Morphisms: \be 
\begin{tikzcd}
(\mathcal{L}(A) \otimes \mathcal{L}(B) ) \otimes \mathcal{L}(C) \arrow[rr, "\alpha"] \arrow[d, "{\phi_{A,B} \otimes i}"'] &  & \mathcal{L}(A) \otimes ( \mathcal{L}(B)  \otimes \mathcal{L}(C)) \arrow[d, "{i \otimes \phi_{B,C}}"] \arrow[lldd, Rightarrow] \\
\mathcal{L}(A \otimes B) \otimes \mathcal{L}(C) \arrow[d, "{\phi_{A \otimes B,C}}"']                                      &  & \mathcal{L}(A) \otimes \mathcal{L}(B \otimes C) \arrow[d, "{\phi_{A,B \otimes C}}"]                                           \\
\mathcal{L}((A \otimes B) \otimes C) \arrow[rr, "\mathcal{L}(\alpha)"]                                                    &  & \mathcal{L}(A \otimes (B \otimes C))                                                                                         
\end{tikzcd}\ee 
\be \begin{tikzcd}
I \otimes \mathcal{L}(A) \arrow[rr, "\phi_o \otimes i"]                 & {} \arrow[d, Rightarrow] & \mathcal{L}(I) \otimes \mathcal{L}(A)  \arrow[d, "{\phi_{I,A}}"] &  & \mathcal{L}(A) \otimes I \arrow[rr, "i \otimes \phi_o"]           & {} \arrow[d, Rightarrow] & \mathcal{L}(A) \otimes \mathcal{L}(I)  \arrow[d, "{\phi_{A,I}}"] \\
\mathcal{L}(A) \arrow[u, "\lambda"] \arrow[rr, "\mathcal{L}(\lambda)"'] & {}                       & \mathcal{L}(I \otimes A)                                         &  & \mathcal{L}(A) \arrow[u, "\rho"] \arrow[rr, "\mathcal{L}(\rho)"'] & {}                       & \mathcal{L}(A \otimes I)                                        
\end{tikzcd} \,.\ee 
\end{definition}
The coherence conditions required for a homomorphism of monoidal bicategories are left out of the above definition since they are automatically satisfied by uniqueness of $2$-morphisms in $\mathcal{P}[\mathbf{C}]$. We refer to a monoidal composable constraint as \textit{strong} if the components of the oplax natural transformation $\phi_{A,B}:   \mathcal{L}(A) \otimes \mathcal{L}(B) \rightarrow \mathcal{L}(A \otimes B)$ and $\phi_o$ are isomorphisms. Taking the extreme case, a \textit{strict} monoidal composable constraint is then one for which $\phi_{A,B}$ and $\phi_o$ are identities. The strict case can be concisely rephrased by requiring that the oplax natural transformation $\phi$ be an \textit{icon} \cite{icons_lack}. The strict case is the most easily interpreted: it entails precisely the basic principle for monoidal compatibility:

\be \mathcal{L}(\tau) \otimes \mathcal{L}(\lambda) \subseteq \mathcal{L} (\tau \otimes \lambda) \,,\ee

expressed in functor box notation by

\be \tikzfigscale{1.0}{scheme_8} .\ee 

In the strict case, coherence isomorphisms indeed satisfy $\lambda \leq \mathcal{L}(\lambda)$, $\rho \leq \mathcal{L}(\rho)$, and $\alpha \leq \mathcal{L}(\alpha)$, as required. Graphically, one can see that strict monoidal composable constraints capture our basic principle by the following steps:

\be \tikzfigscale{1.0}{power_mon} .\ee

For every strong monoidal composable constraint, one can construct a monoidal category (and so a circuit theory) of morphisms in parallel with their constraints. 
\begin{theorem}[Monoidal Category of Constrained Morphisms]
Let $\mathcal{L}: \mathbf{Con} \longrightarrow \mathcal{P}[\cat{C}]$ be a strong monoidal composable constraint. The constrained category $\cat{C}_{\mathcal{L}}$ is a monoidal category.
\end{theorem}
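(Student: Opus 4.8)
The construction is already known to be a category (by the \emph{Category of Constrained Morphisms} definition), so the content of the theorem is to produce a monoidal structure on $\cat{C}_{\mathcal{L}}$. The plan is to build it \emph{componentwise}: the first ($\mathbf{Con}$-)slot carries the monoidal structure of $\mathbf{Con}$ verbatim, and the second ($\cat{C}$-)slot carries the monoidal structure of $\cat{C}$ transported along the strong structure maps. The one preliminary observation needed is a small lemma on the free quantaloid: an isomorphism in $\mathcal{P}[\cat{C}]$ must be a singleton $\{u\}$ with $u$ an isomorphism of $\cat{C}$ (if $S\circ S'=\{\mathrm{id}\}$ and $S'\circ S=\{\mathrm{id}\}$ then every element of $S$ is a two-sided inverse of every element of $S'$, whence both are singletons on mutually inverse isos). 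Hence strongness furnishes, for all $A,B$, an isomorphism $\widetilde{\phi}_{A,B}\colon \mathcal{L}(A)\otimes\mathcal{L}(B)\xrightarrow{\sim}\mathcal{L}(A\otimes B)$ in $\cat{C}$ with $\phi_{A,B}=\{\widetilde{\phi}_{A,B}\}$, and an isomorphism $\widetilde{\phi}_o\colon I\xrightarrow{\sim}\mathcal{L}(I)$ in $\cat{C}$ with $\phi_o=\{\widetilde{\phi}_o\}$.

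Next I would define the tensor. On objects, $(a,\mathcal{L}(a))\otimes(b,\mathcal{L}(b)):=(a\otimes b,\mathcal{L}(a\otimes b))$, with unit $(I,\mathcal{L}(I))$; both are again objects of $\cat{C}_{\mathcal{L}}$. On morphisms, $(\tau,f)\otimes(\tau',f'):=\bigl(\tau\otimes\tau',\ \widetilde{\phi}_{\cod}\circ(f\otimes f')\circ\widetilde{\phi}_{\dom}^{-1}\bigr)$, the $\widetilde{\phi}$'s being taken at the (co)domains of $\tau,\tau'$. The $\cat{C}$-component is a legitimate element of $\mathcal{L}(\tau\otimes\tau')$: since $f\otimes f'\in\mathcal{L}(\tau)\otimes\mathcal{L}(\tau')$ by definition of $\otimes$ in $\mathcal{P}[\cat{C}]$, the oplax naturality $2$-cell of $\phi$ at $(\tau,\tau')$ — which in $\mathcal{P}[\cat{C}]$ is the inclusion $\phi_{\cod}\circ(\mathcal{L}(\tau)\otimes\mathcal{L}(\tau'))\subseteq\mathcal{L}(\tau\otimes\tau')\circ\phi_{\dom}$ — postcomposed with $\widetilde{\phi}_{\dom}^{-1}$ yields exactly $\widetilde{\phi}_{\cod}\circ(f\otimes f')\circ\widetilde{\phi}_{\dom}^{-1}\in\mathcal{L}(\tau\otimes\tau')$. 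Bifunctoriality of $\otimes$ on $\cat{C}_{\mathcal{L}}$ then reduces to bifunctoriality of $\otimes$ on $\mathbf{Con}$ (first slot) and on $\cat{C}$ (second slot), with the conjugating $\widetilde{\phi}$'s cancelling telescopically under composition and $\mathrm{id}\otimes\mathrm{id}=\mathrm{id}$ together with naturality of $\widetilde{\phi}$ handling identities.

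I would then define the associator and unitors of $\cat{C}_{\mathcal{L}}$ with the corresponding coherence morphism of $\mathbf{Con}$ in the first slot and, in the second slot, the corresponding coherence isomorphism of $\cat{C}$ pre- and postcomposed with the appropriate composite of $\widetilde{\phi}_{A,B}$'s and $\widetilde{\phi}_o$ (e.g.\ the $\cat{C}$-component of the associator at $(a,\mathcal{L}(a)),(b,\mathcal{L}(b)),(c,\mathcal{L}(c))$ is $\widetilde{\phi}_{a,b\otimes c}\circ(\mathrm{id}\otimes\widetilde{\phi}_{b,c})\circ\alpha^{\cat{C}}\circ(\widetilde{\phi}_{a,b}^{-1}\otimes\mathrm{id})\circ\widetilde{\phi}_{a\otimes b,c}^{-1}$). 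That each such pair really is a morphism of $\cat{C}_{\mathcal{L}}$ — i.e.\ that the conjugated $\cat{C}$-coherence isomorphism lies in $\mathcal{L}$ of the corresponding $\mathbf{Con}$-coherence morphism — is precisely what the $2$-cell squares in the definition of a monoidal composable constraint assert (reading those $2$-cells as inclusions in $\mathcal{P}[\cat{C}]$ and using invertibility of the $\widetilde{\phi}$'s). Naturality of these families, and the pentagon and triangle identities in $\cat{C}_{\mathcal{L}}$, again split into two slots: in the $\mathbf{Con}$-slot they are the monoidal axioms of $\mathbf{Con}$, and in the $\cat{C}$-slot they follow from the monoidal axioms of $\cat{C}$ together with the coherence equations among the $\widetilde{\phi}$'s and $\widetilde{\phi}_o$, which are forced by uniqueness of $2$-morphisms in $\mathcal{P}[\cat{C}]$ — exactly the remark the paper already used to drop the homomorphism coherence conditions from the definition.

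The step I expect to need the most care is checking that the transported coherence morphisms are genuine \emph{isomorphisms in} $\cat{C}_{\mathcal{L}}$, not just morphisms: a candidate inverse carries the (invertible) $\cat{C}$-coherence conjugate in its second slot, but one must know that this conjugate lies in $\mathcal{L}$ of the \emph{inverse} $\mathbf{Con}$-coherence morphism. Here I would first note that $\mathbf{Con}$ is locally posetal, so its coherence $2$-cells are invertible, hence identities; thus each $\mathbf{Con}$-coherence morphism is a strict iso with a well-defined inverse $1$-cell. The needed membership of the inverse $\cat{C}$-coherence conjugate then comes from the evident companion $2$-cell squares for the inverse coherence $1$-cells, which are part of the package of a homomorphism of monoidal bicategories. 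A clean way to organise the whole argument — and to make this point transparent — is to treat the \emph{strict} case first: there $\phi_{A,B}$ and $\phi_o$ are identities, $\mathcal{L}(a\otimes b)=\mathcal{L}(a)\otimes\mathcal{L}(b)$ on the nose, and $\cat{C}_{\mathcal{L}}$ is literally a monoidal subcategory of $\mathbf{Con}\times\cat{C}$ (closed under $\otimes$ and containing the coherence isomorphisms and their inverses), whence everything is inherited; the strong case is then obtained either by the transport above or by reducing it to the strict case via a strictification of $\mathcal{L}$.
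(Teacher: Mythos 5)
Your proposal is correct and follows essentially the same route as the paper's appendix proof: the tensor is defined componentwise with the $\cat{C}$-slot conjugated by the strong structure maps $\phi$, membership in $\mathcal{L}(\tau\otimes\tau')$ follows from the oplax naturality square, and the coherence isomorphisms are likewise transported and verified against the $2$-cell squares, with the strict case as the transparent special case. Your preliminary lemma that isomorphisms in $\mathcal{P}[\cat{C}]$ are singletons on isomorphisms of $\cat{C}$, and your explicit attention to the invertibility of the transported coherence morphisms, are points the paper leaves implicit but do not change the argument.
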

\begin{proof}
A full proof is given in the appendix. In the strict case, it is easy to check that one can define a monoidal category by: \begin{itemize}
    \item $(\tau,f) \otimes (\tau',f') := (\tau \otimes \tau',f \otimes f')$;
    \item all coherence isomorphisms defined component wise, e.g. $\alpha_{\cat{C}_{\mathcal{L}}} := (\alpha_{\mathbf{Con}},\alpha_{\cat{C}})$.
\end{itemize} 

Crucially, the proposed coherence morphisms are well-defined since $\alpha_{\cat{C}} \in \{\alpha_{\cat{C}}\} = \alpha_{\mathcal{P}[\cat{C}]} \subseteq \mathcal{L}(\alpha_{\mathbf{Con}})$, and similarly for left and right unitors.
\end{proof}
Beyond monoidal structure, quantum-like theories in particular have a plethora of other important categorical structures: among other things, they are often symmetric (braided), dagger monoidal, or compact closed. For each such structure, the corresponding notion of constraint compatibility can be introduced, by requiring the following conditions.
\begin{itemize}
    \item Braided monoidalness intuitively requires $\beta_{\mathcal{P}[\cat{C}]} \subseteq \mathcal{L}(\beta_{\mathbf{C}})$, where $\beta_X$ is the braiding natural transformation for category $X$, formally given by \be \begin{tikzcd}
\mathcal{L}(A) \otimes \mathcal{L}(B) \arrow[d, "{\phi_{A,B}}"'] \arrow[rr, "{\beta_{\mathcal{L}(A),\mathcal{L}(B)}}"] & {} \arrow[d, Rightarrow] & \mathcal{L}(B) \otimes \mathcal{L}(A) \arrow[d, "{\phi_{B,A}}"] \\
\mathcal{L}(A \otimes B) \arrow[rr, "{\mathcal{L}(\beta_{A,B})}"']                                                     & {}                       & \mathcal{L}(B \otimes A)                                       
\end{tikzcd} \,;\ee 
    \item $\dagger$-monoidalness: $\mathcal{L} \circ \dagger_{\mathbf{Con}} = \dagger_{\mathcal{P}[\cat{C}]} \circ \mathcal{L}$ where the coherence isomorphisms of $\mathcal{L}$ are furthermore unitary;
    \item Compact closure: $\cup_{\mathcal{P}[\cat{C}]} \subseteq \mathcal{L}(\cup_{\mathbf{Con}})$, where $\cup_{X}^{AA*}$ is the cup on object $A$ in category $X$, or formally, \be \begin{tikzcd}
I \arrow[d, "\phi"'] \arrow[rr, "{\cup_{\mathcal{L}(A),\mathcal{L}(B)}}"] & {} \arrow[d, Rightarrow] & \mathcal{L}(A) \otimes \mathcal{L}(A^{*}) \arrow[d, "{\phi_{B,A}}"] \\
\mathcal{L}(I) \arrow[rr, "\mathcal{L}(\cup_{AA^{*}})"']                  & {}                       & \mathcal{L}(A \otimes A^{*})                                       
\end{tikzcd}\,.\ee 
\end{itemize}
\begin{theorem}[Inheritance of structure to constrained categories]
If a composable constraint $\mathcal{L}$ is:
\begin{itemize}
    \item a braided monoidal constraint, then $\mathbf{C}_{\mathcal{L}}$ is braided monoidal;
    \item a $\dagger$-constraint, then $\mathbf{C}_{\mathcal{L}}$ is a $\dagger$-category;
    \item a compact closed constraint, then $\mathbf{C}_{\mathcal{L}}$ is compact closed.
\end{itemize}
\end{theorem}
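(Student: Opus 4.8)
The plan is to handle the three cases by the same recipe already used for monoidal structure: put every new piece of structure on $\mathbf{C}_{\mathcal{L}}$ \emph{componentwise}, one component in $\mathbf{Con}$ and one in $\mathbf{C}$, then separately check (i) that each such pair is a legitimate morphism of $\mathbf{C}_{\mathcal{L}}$ (i.e.\ the $\mathbf{C}$-component lies in $\mathcal{L}$ of the $\mathbf{Con}$-component), and (ii) that the defining equations of the structure hold. Part (ii) is automatic: composition and tensor in $\mathbf{C}_{\mathcal{L}}$ are componentwise and $\mathbf{C}_{\mathcal{L}} \subseteq \mathbf{Con}\times\mathbf{C}$ is a sub-$2$-category, so any equation between formal composites of structure morphisms that already holds in both $\mathbf{Con}$ and $\mathbf{C}$ holds in $\mathbf{C}_{\mathcal{L}}$ — thus the naturality and hexagon equations of the braiding, the involutivity, identity-on-objects and contravariant functoriality of $\dagger$, and the snake/yanking equations of compact closure all transfer for free. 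So all the real content sits in part (i), and that is precisely what the three compatibility conditions stated just before the theorem were designed to supply.

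Concretely: for the braided case I would set $\beta^{\mathbf{C}_{\mathcal{L}}}_{(a,\mathcal{L}(a)),(b,\mathcal{L}(b))} := (\beta^{\mathbf{Con}}_{a,b},\,\beta^{\mathbf{C}}_{\mathcal{L}(a),\mathcal{L}(b)})$; this is a morphism of $\mathbf{C}_{\mathcal{L}}$ exactly because the braided-compatibility square forces $\{\beta^{\mathbf{C}}_{\mathcal{L}(a),\mathcal{L}(b)}\} = \beta_{\mathcal{P}[\mathbf{C}]} \subseteq \mathcal{L}(\beta^{\mathbf{Con}}_{a,b})$, i.e.\ $\beta^{\mathbf{C}}_{\mathcal{L}(a),\mathcal{L}(b)} \in \mathcal{L}(\beta^{\mathbf{Con}}_{a,b})$. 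For the $\dagger$ case I would set $(\tau,f)^\dagger := (\tau^\dagger, f^\dagger)$; legality is the implication $f \in \mathcal{L}(\tau) \implies f^\dagger \in \mathcal{L}(\tau^\dagger)$, which is immediate from $\mathcal{L}\circ\dagger_{\mathbf{Con}} = \dagger_{\mathcal{P}[\mathbf{C}]}\circ\mathcal{L}$ once one notes that $\dagger_{\mathcal{P}[\mathbf{C}]}$ acts by $S \mapsto \{g^\dagger : g\in S\}$, while the requirement that the coherence isomorphisms of $\mathbf{C}_{\mathcal{L}}$ be unitary reduces componentwise to the corresponding unitarity in $\mathbf{Con}$ and $\mathbf{C}$, which is part of the hypothesis. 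For the compact-closed case I would take the dual of $(a,\mathcal{L}(a))$ to be $(a^{*},\mathcal{L}(a^{*}))$ and set the cup and cap of $\mathbf{C}_{\mathcal{L}}$ to $(\cup^{\mathbf{Con}}_{a},\cup^{\mathbf{C}}_{\mathcal{L}(a)})$ and $(\cap^{\mathbf{Con}}_{a},\cap^{\mathbf{C}}_{\mathcal{L}(a)})$; legality is exactly the condition $\cup_{\mathcal{P}[\mathbf{C}]} \subseteq \mathcal{L}(\cup_{\mathbf{Con}})$ (together with its transpose for the cap), and this same condition is what guarantees that $\mathcal{L}(a^{*})$ genuinely serves as a dual of $\mathcal{L}(a)$ in $\mathbf{C}$.

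I expect the one genuinely delicate point to be the non-strict coherence data — the same issue that pushes the proof of the monoidal theorem into the appendix. In the strong (not strict) case the monoidal product on $\mathbf{C}_{\mathcal{L}}$ carries the mediating isomorphisms $\phi_{A,B}$ of $\mathcal{L}$, so the braiding naturality squares, the hexagons, and the compact-closure triangles must be verified with those insertions rather than on the nose. This should be dispatched exactly as there: every coherence $2$-cell one needs in $\mathbf{C}_{\mathcal{L}}$ exists and is \emph{unique} in $\mathcal{P}[\mathbf{C}]$ (its $2$-cells are mere inclusions), so every coherence diagram in $\mathbf{C}_{\mathcal{L}}$ commutes automatically once the underlying diagrams in $\mathbf{Con}$ and $\mathbf{C}$ do; alternatively one strictifies and invokes coherence for braided and for compact-closed monoidal bicategories. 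So the proof I would write is: reduce to the strict case as in the preceding Monoidal Category of Constrained Morphisms theorem, give the componentwise definitions above, read off well-definedness from the three displayed compatibility conditions, and observe that every remaining axiom is inherited componentwise from $\mathbf{Con}$ and $\mathbf{C}$; the full non-strict bookkeeping goes in the appendix next to the monoidal case.
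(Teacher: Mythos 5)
Your proposal matches the paper's proof in all essentials: the appendix likewise defines $(\tau,f)^\dagger:=(\tau^\dagger,f^\dagger)$, the braiding, and the cups and caps componentwise, reads well-definedness off the three displayed compatibility conditions, and then verifies the hexagon, snake, and $\dagger$-monoidal equations by explicit commutative diagrams in $\mathbf{C}$ with the mediating isomorphisms $\phi$ inserted — exactly the "non-strict bookkeeping" you defer to the appendix. One small caution: the uniqueness of $2$-cells in $\mathcal{P}[\mathbf{C}]$ only trivialises the coherence data of the lax functor $\mathcal{L}$ itself, not the $1$-morphism equations in $\mathbf{C}_{\mathcal{L}}$, which still require the naturality-of-$\phi$ diagram chases you correctly identify as the residual work.
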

\begin{proof}
Given in the appendix.
\end{proof}
Whenever one has a theory of compact constraints over a compact category, one may as a result construct a new category, in which the constraints may be computed in parallel with the morphisms they constrain, \textit{and} which features a string diagram calculus with cups and caps.

\section{Examples} \label{sec:examples}
We now explore examples of composable constraints with increasing complexity. Two common and distinct classes are observed:
\begin{itemize}
%    \item \textit{subcategories} which, in the language of composable constraints, are the most primitive: the role of the constraint category is then simply to act as a domain which is used to specify the subset of morphisms to be included between any two objects;
    \item \textit{object-based type systems} which allow for using labelling of the objects that morphisms go between to specify particular subsets of the morphisms between those underlying objects;
    \item \textit{relational type systems} in which relations, as morphisms themselves, are used to specify the behaviour of morphisms.
\end{itemize}
The former notions can be understood as arising from constraints which are \textit{thin}, whereas relational constraints are formally more complex in that they require \textit{thick} constraint categories for their expression. In this sense, relational and in general composable constraints can be seen as a natural generalisation of sub-categories and type-systems; intuitively, rather than prescribing the \textit{preservation} of the structure of objects, relational constraints prescribe the \textit{distribution} of structure of objects.

\subsection{Thin constraints}
A thin category is a category in which there is at most one arrow between each pair of objects. Here, we will begin by considering constraints whose domains are thin categories:

\be \tikzfigscale{1.0}{thin} \,.\ee

In general we will call constraints $\mathcal{L}$ with thin domains `thin constraints'; whilst being far simpler than the general behaviour we wish to model, thin constraints already contain many familiar examples. In each case, the resulting constrained category of a thin constraint has morphisms of the form $(\bullet,f)$; where $\bullet$ (representing the unique morphism between the relevant objects of the domain category) plays no role in the specification of the constraint beyond the specification of its input and output spaces. As a result, thin constraints can be viewed as the class of composable constraints which can easily be converted into \textit{structure} encoded within objects by moving to the constrained category.
The most familiar notion of a theory of composable constraints is that of a subcategory: each hom-set of $\mathbf{C}$ is restricted to a sub-hom-set, so that the hom-sets are still closed under composition of $\mathbf{C}$. Subcategories are examples of faithful functors and so of functors. We first show that each functor into $\mathbf{C}$ can be used to define a simple constrained category.

\begin{example}
Let $\mathcal{F}:\mathbf{C}' \rightarrow \mathbf{C}$ be a functor; we then define a locally discrete $2$-category $\mathbf{Sub}_{\mathcal{F}}$ with objects given by objects of $\mathbf{C}'$ and hom-sets given by singletons $\mathbf{Sub}_{\mathcal{F}}(A,B) := \{ \bullet \}$ with the obvious composition rule $\bullet \circ \bullet = \bullet$. There exists a composable constraint $\mathcal{L}:\mathbf{Sub}_{\mathcal{F}} \rightarrow \mathbf{C}$ given by $\mathcal{L}(A) = \mathcal{F}(A)$ and $\mathcal{L}(\bullet:A \rightarrow B) = \mathcal{F}(\mathbf{C}'(A,B))$. The resulting constrained category $\mathbf{C}_{\mathcal{L}}$ has objects given by pairs $(A,\mathcal{F}(A))$ and  hom-sets given by $\mathbf{C}_{\mathcal{L}}((A,\mathcal{F}(A)),(B,\mathcal{F}(B)) = \{(\bullet,f) | \exists g: A \rightarrow B, f = \mathcal{F}(g)\}$; it is as a result an intermediate category that can be used in the factorisation of a functor $\mathcal{F}:\mathbf{C}' \rightarrow \mathbf{C}$ into a full and bijective-on-objects functor $\mathcal{K}:\mathbf{C}' \rightarrow \mathbf{C}_{\mathcal{L}}$ and a faithul functor $M:\mathbf{C}_{\mathcal{L}} \rightarrow \mathbf{C}$.
\end{example}
A less trivial example is the idempotent completion of any category, which constructs from an arbitrary category a new type system in which objects are given by idempotents and morphisms are required to be invariant under the action of those idempotents.
\begin{example}
Define a category $\Pi$ with objects of the form $(A,\pi)$, with $A$ an object of $\mathbf{C}$ and $\pi:A \rightarrow A$ an idempotent morphism on $A$, and hom-sets again taken to be singletons $\Pi(A,B) := \{ \bullet \}$. A composable constraint is defined by $\mathcal{L}(A,\pi) := A$ and \be \mathcal{L}(\bullet:(A,\pi) \rightarrow (B,\pi')) := \{f|f=\pi' \circ f \circ \pi\} \,.\ee  The constrained category $\mathbf{C}_{\mathcal{L}}$ resulting from this thin constraint is isomorphic to the Karoubi envelope (idempotent completion) of $\mathbf{C}$.
\end{example}

Finally, we consider type systems constructed from specifying subsets of state spaces and their preservation: we give an example  of a constraint whose equivalent constrained category is isomorphic to the $\mathbf{Caus}[\mathbf{C}]$ construction \cite{Kissinger2019AStructure}, used to construct higher order processes from raw material pre-causal categories. For any set $S$, we define the category $S_{\circlearrowleft} $ to be the category with objects given by members of $S$ and homsets given by singletons $S_{\circlearrowleft}  (A,B) := \{ \bullet \}$. For any subset $a \subseteq \mathbf{C}(I,A)$ and $b \subseteq \mathbf{C}(I,B)$ and morphism $f:A \rightarrow B$, we write $f \circ a \subseteq b$ to express the statement $\forall \rho \in a, \quad f \circ \rho \in b $.

\begin{example}[State preservation]
Let $\mathbf{C}$ be a category. For every $S \subseteq \sqcup_{A \in o(\mathbf{C})} P[ \mathbf{C}(I,A)]$, there exists a compositional constraint with domain given by $S_{ \circlearrowleft} $ and lax functor given by $\mathcal{L}_{S}(\bullet:c \rightarrow d) := \{f \textrm{ s.t }f \circ c \subseteq d \} $. It is clear to see that $i \in \mathcal{L}_{S}(i:c \rightarrow c)$ and that \be  f \circ c \subseteq d  \wedge g \circ d \subseteq e \implies (g \circ f) \circ c \subseteq e \,.\ee 
\end{example}
An example of such a construction is the construction of higher order causal categories (HOCCs) \cite{Kissinger2019AStructure}. The condition for a state set to be a member of $S$ is that it ought to be closed and flat; remarkably, the conditions of closure and flatness are enough to ensure a variety of structures on the induced constrained category, all the way to $*$-autonomous structure. In more general settings, one can produce a monoidal constraint by requiring `complete' state preservation for morphisms.
\begin{example}[Complete state preservation]
Let $S \subseteq \sqcup_{A \in o(\mathbf{C})} P[ \mathbf{C}(I,A)]$ be equipped with a function $\otimes : S \times S \rightarrow S$ such that 
\begin{itemize}
    \item $o(c \otimes d) = c \otimes d$
    \item $\rho \circ (c \otimes c_I) = c$
    \item $\lambda \circ (c_I \otimes c) = c$
    \item $\alpha \circ (c \otimes d) \otimes e = c \otimes (d \otimes e)$
\end{itemize}
Then there exists a monoidal compositional constraint given by 
\be  \mathcal{L}_{S}^{\otimes}(\bullet : c \rightarrow d) = \{f \textrm{ s.t } \forall e : \textrm{ } (f \otimes i_{e}) \circ (c \otimes e) = (d \otimes e) \} \,.  \ee  This notion of preservation will be referred to as complete preservation, in analogy with complete positivity.
\end{example}
Having shown that basic notions of composable constraints fit within the framework, we now explore more elaborate notions of composable constraint by looking at examples with richer domains. The new relational types for morphisms express how morphisms must distribute structure, as opposed to expressing that they must preserve structure.

\subsection{Constraints on monoids by relations}

We now observe a first basic example of constraining one-object categories with endorelations. Far from being a toy example, this construction captures compositionality of causality constraints in dynamical quantum networks \cite{Arrighi2021QuantumTheory, Arrighi2017QuantumDynamics}. We will denote the full subcategory of a category $\mathbf{C}$ with single object $S$ by $\mathbf{C}_{S}$. 
\begin{example}
For every function $f:S \rightarrow M$ from a set $S$ into a monoid $M$, there exists a compositional constraint $\mathcal{L}_{f}:  \mathbf{REL}_{S} \rightarrow \mathcal{P}[\mathbf{M}] $, given by $ \mathcal{L}_{f}(S) = M $ and by
\be \mathcal{L}_{f}(\tau:S \rightarrow S) = \{m \in M \textrm{ s.t }  \forall x \overset{\tau}{\sim} y, \textrm{ } \exists m' \textrm{ s.t } f(y)m = m'f(x)    \} \,. \ee 

Note that $\mathcal{L}_{f}(i) = \{m \textrm{ s.t } \forall x,  \exists m' \textrm{ s.t } f(x)m = f(x)m'   \} \supseteq \{i \} $ and that

\be f,g \in \mathcal{L}(\tau) \times \mathcal{L}(\lambda) \implies \forall x \overset{\tau \circ \lambda}{\sim} z : f(z) m  n = m' f(y) n = m'n'f(x) \,, \ee 

so the constraint is indeed compositional.
\end{example}
A concrete example of a relational constraint over a monoid is the description of causality constraints on theories of dynamical quantum networks \cite{Arrighi2021QuantumTheory, Arrighi2017QuantumDynamics}. For instance, signalling constraints between subsystems $\chi$ and $\zeta$ in quantum networks theory \cite{Arrighi2021QuantumTheory} are defined over a sub-monoid of all linear maps  generated by unitary actions $U(-)U^{\dagger}$, and restriction-parametrised partial traces $(-)_{|\chi}$ by the condition $(U(-)U^{\dagger})_{|\zeta} = (U(-)_{|\chi}U^{\dagger})_{|\zeta}$. The relevant function $f$ in this context is given by $f(\chi) = (-)_{|\chi}$, assigning each restriction to its corresponding partial trace. The equivalence between causality constraints in quantum networks and the above construction for general monoids is given by noting that
\be  f(\zeta) \circ \hat{U}  = f(\zeta) \circ \hat{U} \circ f(\chi) \iff \exists m'\textrm { s.t }  f(\zeta) \circ \hat{U}  =  m' \circ f(\chi) \,, \ee

with $\hat{U}(-):= U(-)U^{\dagger}$. Signalling constraints in more familiar circuit based models will be constructed in section \ref{sec:Relational Constraints}. The relevance of introducing composable constraints to monoids to physics will in more general terms likely come from applying them to theories which take the view of the process-theoretic church of the larger Hilbert space \cite{Gogioso2019ASpace}, in which the universe is modelled as a single object from which subsystems are then carved out.

\subsection{Sectorial constraints in categories enriched over commutative monoids}\label{sec:Sectorial Constraints}

A less trivial, and quite general, example of composable constraints is that of \textit{sectorial constraints}. Conceptually, sectorial constraints correspond to the idea of stating that some possible alternatives for an input state cannot lead to some alternatives for the output state. The seminal example is that of sectors (i.e. orthogonal subspaces) of Hilbert spaces: a natural interpretation of a partition into sectors in quantum theory is that each sector corresponds to one of the possible outcomes of a measurement.

Mathematically, the idea is to look at  finite biproducts (when these exist), and to restrict the morphisms between them to feature no connection between some of the underlying objects of the biproduct (which we call sectors):

\begin{equation}\label{eq:sector1}
\tikzfigscale{1}{scheme_2} \, .
\end{equation}.

This can be defined in any theory enriched over commutative monoids, which includes semiadditive categories and preadditive categories, such as those of modules over rings. The category in which sectorial constraints live is then isomorphic a category of finite relations.

\begin{example} \label{example:Sectorial Constraints}
Let \cat{C} be a category enriched over commutative monoids. We define the category $\cat{Sec}_{\cat{C}}$ whose objects are finite lists $(A^k)_{k \in K}$ of objects of \cat{C} for which a biproduct exists\footnote{Note that it is sufficient that a (co)product of these objects exists, as enrichment over commutative monoids implies that it will actually be a biproduct.}, together with a specific choice $A$ of such a biproduct; and whose morphisms are (finite) relations between the sets of indices of the lists. There exists a composable constraint $\cl_{\cat{Sec}_{\cat{C}}} : \cat{Sec}_{\cat{C}} \to \cat{C}$ defined by
\be \cl(\lambda) = \{ f: A \to B \, | \, \forall k,l \textrm{ not related by } \lambda, p^l_B \circ f \circ i^k_A = 0 \} \, , \ee
where $i_A^k$ is the injection $A^k \to A$ and $p_B^l$ is the projection $B^l \to B$.
\end{example}

\begin{proof}
Let us prove the inclusion $\cl_{\cat{Sec}_{\cat{C}}}(\sigma) \circ \cl_{\cat{Sec}_{\cat{C}}}(\lambda) \subseteq \cl_{\cat{Sec}_{\cat{C}}}(\sigma \circ \lambda)$. Suppose $f: A \to B$ and $g: B \to C$ satisfy the respective constraints. Then it is easy to prove that $\textrm{id}_{B} = \sum_l p_B^l \circ i_B^l$, and therefore,
$$\forall k,m, p^m_C \circ g \circ f \circ i^k_A = \sum_l (p^m_C \circ g \circ i_B^l) \circ (p_B^l \circ f \circ i^k_A) \, .$$

In addition, taking $k$ and $m$ not related by $\sigma \circ \lambda$, one has that for any $l$, it is true that $k$ is not related to $l$ by $\lambda$ or that $l$ is not related to $m$ by $\sigma$ (or both); from there one can deduce using the previous equation that $p^m_C \circ g \circ f \circ i^k_A = 0$.
\end{proof}

If \cat{C} is semiadditive (i.e. if it has all finite biproducts), then it is easy to see that $\cat{Sec}[\cat{C}]$, once equipped with the disjoint-union monoidal structure of finite relations, forms a monoidal theory of constraints with respect to the biproduct monoidal structure of  \cat{C}. Moreover, if \cat{C} is bimonoidal \cite{johnson2021bimonoidal}, then one can see that sectorial constraints also form monoidal composable constraints with respect to the cartesian product of relations; the same goes for dagger structure and compact closure.

The use of finite relations to encode sectorial constraints was originally developed for the case $\cat{C} = \FHilb$ in \cite{vanrietvelde2020routed}, in order to model more accurately some quantum-theoretical scenarios, giving rise to so-called \textit{routed maps}; this in turn was the main motivation for the generalisation to arbitrary composable constraints presented here.

\subsection{Relational constraints in semicartesian categories} \label{sec:Relational Constraints}

We now consider relational constraints for processes in semicartesian categories, that is, monoidal categories for which the unit object is terminal (in simple terms, this means that there is only one effect on each object). Typical examples are the theory of quantum channels (i.e. deterministic quantum processes) and that of classical stochastic maps; for instance, in  the former, the only deterministic quantum channel into a $1$-dimensional system is the trace operation. Another example is that of categories with all finite coproducts. In the first case, the unique effect on a given object can be interpreted as the discarding process, through which one forgets about a system; in the second one, it can be interpreted as a negation, meaning that one of several possible alternatives is false. The first of these two interpretations leads to the introduction of \textit{signalling conditions} on multiparty processes. For example, in the following diagram we specify a constraint on which sub-systems of the input system can influence subsystems of the output system:

\be \tikzfig{figures/signal_1_orange}\,.\ee 

The same could be said about the second interpretation -- in fact, as we will see, this will then yield a generalisation of sectorial constraints. In general, we will call such constraints \textit{relational constraints}, as they naturally have the form of (finite) relations. Note that the actual constraints here come from the \textit{absences} of arrows.

This condition on the possibility of influence between input and output systems can be formalised in any semicartesian category in terms of discarding processes, by requiring that whenever all arrows emerging from a particular input are discarded, then the input system itself can be discarded. For example, there are two non-trivial constraints encoded by the above diagram. The first is a specification of the result of discarding all arrows emerging from the left-hand input: \be \tikzfig{figures/signal_2_orange}\,;\ee 
and the second case is the specification of the result of discarding all arrows emerging from the right-hand input:
\be \tikzfig{figures/signal_3_orange} \,.\ee 
The above constraints, when interpreted as signalling constraints for quantum processes, are of broad interest in quantum information and quantum foundations \cite{Allen2017, barrett2019, lorenz2020, barrett2020cyclic, Coecke2013CausalProcesses, Kissinger2019AStructure,  wilsoncausal2021, semi_causal, Beckman_causal_local, perinotti_causal_influence}. We will now show that such constraints fit within the framework of composable constraints. The result of the constrained category construction here will be a formal circuit theory in which signalling constraints are expressed within circuit diagrams, essentially giving a formalisation of the above schematic diagrams.

\begin{example} \label{example:SemicartesianRelational}
For any symmetric semicartesian category $(\mathbf{C},\otimes,I)$ there exists a symmetric monoidal composable constraint $\mathcal{L}_{Rel}: \mathbf{Rel}_{\mathbf{C}} \rightarrow \mathbf{C}$, where the symmetric monoidal category $(\mathbf{Rel}_{\mathbf{C}},\sqcup,[\bullet])$ of relational constraints for $\mathbf{C}$ is defined as the category whose objects are finite lists $H := [H_1 , \dots , H_n]$ of objects of $\mathbf{C}$; and whose morphisms are relations between the sets of labels of the lists (we refer to these sets as $\texttt{label}(H)$). Graphically, one can picture these relations as bi-graphs on objects-labelled nodes:

\be \tikzfig{figures/signal_bigraph_orange} \,.\ee 

The monoidal product $\sqcup$ in $\mathbf{Rel}_{\mathbf{C}}$ is taken to be a coproduct structure, given on objects by list concatenation: $[H_1 \dots H_n] \sqcup [G_1 \dots G_m] := [H_1 , \dots , H_n , G_1 , \dots , G_m]$, with a strict unit object given by the empty list $[\bullet]$; and on morphisms by the coproduct of $\mathbf{Rel}$. Graphically, this monoidal product admits a very simple graphical representation:

\be \tikzfig{figures/signal_bigraph_product_orange} \,.\ee 

The composable constraint $\mathcal{L}_{Rel}$ is given on objects by $\mathcal{L}_{Sig}([H_1 \dots H_n]) = \bigotimes_{i} H_{i}$. To define its action on morphisms, we introduce some preliminary notation. For a relation $\tau: K \to L$ and an input element $i \in \texttt{label}(K)$, we define the set $\tau_i := \{j|i \overset{\tau}{\sim} j\} \subseteq \texttt{label}(L)$ of the elements that $i$ is related to. We also define, for every object $K$ of $\mathbf{Rel}_{\mathbf{C}}$ and every subset $S \subseteq \texttt{label}(K)$, a function $S(-):\texttt{label}(K) \rightarrow \texttt{morph}(\mathbf{C})$ into the morphisms of $\mathbf{C}$ defined by $S(i) := \discard$ if $i \in S$ and $S(i) = \identity$ otherwise. Finally, for such an $S$, we define the morphism $d_{S}: = \otimes_{i}S(i)$.

Put simply, $S$ is used to decide which objects should be discarded; as an example, consider the following:
\be \tikzfig{figures/signal_d_example}\,.\ee  

 The action of $\mathcal{L}_{Rel}$ on morphisms is then given by
 \be \label{eq:SatisfactionRelConstraints} \mathcal{L}_{\mathbf{Rel}_{\mathbf{C}}}(\tau: K \rightarrow L) = \{f \, | \, \forall i \in \texttt{label}(K), \, \exists f'_i \textrm{ s.t } :d_{\tau_i} \circ f = f_i' \circ d_{\{i\}}\} \, . \ee
 
 \end{example}
 
This is a formal way of defining the signalling constraints as described in the informal introduction. Before proving that this defines a composable constraint encoding, we display a useful lemma, which shows a property we call \textit{domain-atomicity}: if several input elements are not connected to some subset of outputs, then one can say that the monoidal product of these input elements isn't connected to this subset of outputs either.

\begin{lemma} \label{lemma:Domain Atomicity}
Consider $f \in \mathcal{L}_{\mathbf{Rel}_{\mathbf{C}}}(\tau: K \rightarrow L)$, and $T \subseteq \texttt{label}(L)$. Then there exists $f_T'$ such that
\be 
d_T \circ f = f_T' \circ d_{\{i | \tau_i \subseteq T\}} \, .
\ee
\end{lemma}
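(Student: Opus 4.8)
The plan is to reduce first to the case where no outputs are discarded, and then to remove the inputs in $J := \{i \mid \tau_i \subseteq T\}$ one at a time by induction on $|J|$.

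\emph{Step one: pushing the output discard through.} Set $g := d_T \circ f$, let $M$ be the list $L$ with the entries indexed by $T$ deleted, and let $\sigma : K \to M$ be the relation with $\sigma_i := \tau_i \setminus T$. Using only the bookkeeping identities $\discard_{A \otimes B} = \discard_A \otimes \discard_B$ and $d_{S_1} \circ d_{S_2} = d_{S_1 \cup S_2}$ for disjoint $S_1, S_2$ (both valid up to coherence in any symmetric semicartesian category), together with the defining equations $d_{\tau_i} \circ f = f'_i \circ d_{\{i\}}$ of $\mathcal{L}_{\mathbf{Rel}_{\mathbf{C}}}(\tau)$, one computes $d_{\sigma_i} \circ g = d_{\tau_i \cup T} \circ f = (d_{T \setminus \tau_i} \circ f'_i) \circ d_{\{i\}}$ for each $i$, i.e. $g \in \mathcal{L}_{\mathbf{Rel}_{\mathbf{C}}}(\sigma)$; this step needs no extra hypotheses. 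Since $J = \{i \mid \sigma_i = \emptyset\}$ and $d_T \circ f = g$, the lemma reduces to: if $g \in \mathcal{L}_{\mathbf{Rel}_{\mathbf{C}}}(\sigma)$ and $J := \{i \mid \sigma_i = \emptyset\}$, then $g$ factors through $d_J$.

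\emph{Step two: induction on $|J|$.} The base case $J = \emptyset$ is immediate, since $d_\emptyset = \mathrm{id}$. For the inductive step fix $i_0 \in J$. As $\sigma_{i_0} = \emptyset$, the constraint at $i_0$ reads $g = g'_{i_0} \circ d_{\{i_0\}}$, where $g'_{i_0}$ is a morphism out of $K$ with its $i_0$-th entry deleted. If one can show that $g'_{i_0}$ again satisfies the constraint of the relation $\sigma'$ obtained from $\sigma$ by deleting input $i_0$, then $\{j \ne i_0 \mid \sigma_j = \emptyset\} = J \setminus \{i_0\}$ has cardinality $|J| - 1$, so the induction hypothesis gives $g'_{i_0} = g'' \circ d_{J \setminus \{i_0\}}$, and therefore $g = g'' \circ d_{J \setminus \{i_0\}} \circ d_{\{i_0\}} = g'' \circ d_J$, as desired.

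The whole weight of the argument thus rests on the claim $g'_{i_0} \in \mathcal{L}_{\mathbf{Rel}_{\mathbf{C}}}(\sigma')$, which I expect to be the main obstacle. For each $j \ne i_0$ one must produce $h$ with $d_{\sigma_j} \circ g'_{i_0} = h \circ d_{\{j\}}$; substituting $g = g'_{i_0} \circ d_{\{i_0\}}$ into the constraint $d_{\sigma_j} \circ g = g'_j \circ d_{\{j\}}$ gives $d_{\sigma_j} \circ g'_{i_0} \circ d_{\{i_0\}} = g'_j \circ d_{\{j\}}$, so the task is to cancel the trailing $d_{\{i_0\}}$ on the left. One does this by precomposing with a section of $\discard_{K_{i_0}}$, which commutes past $d_{\{j\}}$ because $j \ne i_0$. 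Such a section exists precisely when $K_{i_0}$ admits a state $I \to K_{i_0}$ --- in a semicartesian category, any state automatically splits the discard on its object. This costs nothing in the settings that motivate the paper (quantum channels, classical stochastic maps, and similar categories, in which every object is inhabited), but it is a genuine ingredient: in the reduced $T = \emptyset$ form the statement asserts that a morphism independent of each input in $J$ taken singly must factor through discarding all of them at once, which fails in a semicartesian category possessing an uninhabited object. In the write-up I would accordingly either assume $\mathbf{C}$ has enough states, or carry the chosen states through the cancellation explicitly.
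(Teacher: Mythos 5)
Your proof is correct and rests on the same mechanism as the paper's: precomposing with state-preparations $e_S$ that section the discards $d_S$ so as to peel off the factorisation through $d_{\{i\}}$ one input at a time, your reduction-to-$\sigma$ plus induction being a reorganisation of the paper's direct iterated computation $d_T \circ f = d_T \circ f \circ e_{\{i_1\}} \circ d_{\{i_1\}} \circ \cdots \circ e_{\{i_n\}} \circ d_{\{i_n\}}$. Your remark that the argument needs every object to be inhabited is apt, but it is not an extra cost relative to the paper, whose proof silently makes the same assumption when it picks ``an arbitrary state $\mu_i$ on $M_i$''.
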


\begin{proof}
Let us introduce some notation. For every object $M$ of $\mathbf{Rel}_{\mathbf{C}}$ and every subset $S \subseteq \texttt{label}(M)$, we define the function $\tilde{S}:\texttt{label}(K) \rightarrow \texttt{morph}(\mathbf{C})$ by $\tilde{S}(i) := \mu_i $, where $\mu_i$ is an arbitrary state on $M_i$, if $i \in S$; and $S(i) = \identity$ otherwise. For such an $S$, we then define the morphism $e_{S}: = \otimes_{i} \tilde{S}(i)$. Basically, $e_S$ is a variation on the morphism $d_S$ previously introduced, with the difference being that it `reprepares' an arbitrary state in the discarded objects. From terminality of the unit object, one then has 
$$\forall i \in S, S(i) \circ \tilde{S}(i) \circ S(i) = \discard \circ \mu_i \circ \discard = \discard = S(i) \, ;$$

As this is also the case for $i \in \texttt{label}(K) \setminus S$, we find
$$\forall S, d_S \circ e_S \circ d_S = \otimes_i S_i \circ \tilde{S}_i \circ S_i = \otimes_i S_i = d_S \, .$$

By the same method, one can also prove that $S \subseteq T \implies d_T = d_T \circ e_S \circ d_S$.

We now consider $f \in \mathcal{L}_{\mathbf{Rel}_{\mathbf{C}}}(\tau: K \rightarrow L)$ and $T \subseteq \texttt{label}(L)$. We note $\{i | \tau_i \subseteq T\} = \{i_1, \dots, i_n\}$. The precious equations entail
$$    d_T \circ f = d_T \circ e_{\tau_{i_1}} \circ d_{\tau_{i_1}} \circ f = d_T \circ e_{\tau_{i_1}} \circ f_{i_1}' \circ d_{\{i_1\}} \, .$$

Furthermore it can also be computed that $f_{i_1}' = d_{\tau_{i_1}} \circ f \circ e_{\{i_1\}}$; thus one further has $d_T \circ f = d_T \circ e_{\tau_{i_1}} \circ d_{\tau_{i_1}} \circ f \circ e_{\{i_1\}} \circ d_{\{i_1\}} = d_T \circ f \circ e_{\{i_1\}} \circ d_{\{i_1\}}$. As the same can be said of all of the $i_k$'s, this leads to $d_T \circ f = d_T \circ f \circ e_{\{i_1\}} \circ d_{\{i_1\}} \circ \dots \circ e_{\{i_n\}} \circ d_{\{i_n\}} = d_T \circ f \circ e_{\{i_1, \dots, i_n \}} \circ d_{\{i_1, \dots, i_n \}}$. Taking $f_T' := d_T \circ f \circ e_{\{i_1, \dots, i_n \}}$ ends the proof.

\end{proof}

With this lemma in mind, we can prove composability.

\begin{proof}

We now show that relational constraints, defined as in Example \ref{example:SemicartesianRelational}, are composable. Consider $f \in \mathcal{L}_{\mathbf{Rel}_{\mathbf{C}}}(\tau: K \rightarrow L)$ and $g \in \mathcal{L}_{\mathbf{Rel}_{\mathbf{C}}}(\sigma: L \rightarrow M)$, and take $i \in \texttt{label}(K)$. Then
\be \begin{split}
d_{(\sigma \circ \tau)_i} \circ (g \circ f) &= d_{(\sigma \circ \tau)_i} \circ g \circ e_{\{j| \sigma_j \subseteq (\sigma \circ \tau)_i\}} \circ d_{\{j| \sigma_j \subseteq (\sigma \circ \tau)_i\}} \circ f \\
&= \underbrace{d_{(\sigma \circ \tau)_i} \circ g \circ e_{\{j| \sigma_j \subseteq (\sigma \circ \tau)_i\} \setminus \tau_i} \circ d_{\{j| \sigma_j \subseteq (\sigma \circ \tau)_i\}\setminus \tau_i} \circ e_{\tau_i}}_{\tilde{g}_i} \circ d_{\tau_i} \circ f \\
&= \tilde{g}_i \circ d_{\tau_i} \circ f \\
&= \tilde{g}_i \circ f_i' \circ d_{\{i\}}\,.
\end{split} \ee
Therefore, $g \circ f \in \mathcal{L}_{\mathbf{Rel}_{\mathbf{C}}}(\sigma \circ \tau)$. That the constraints are also monoidal can be proven in an analogous way.
\end{proof}
As mentioned in the preamble to this section, relational constraints capture a multitude of different notions depending on the semi-cartesian structures which are chosen.
\begin{example}[Signalling constraints]
 The category of completely positive trace-preserving maps represents the most general possible causal evolutions of quantum degrees of freedom. In this category, made semicartesian by the standard tensor product (with the unique effect given by the trace operation), relational constraints indeed encode signalling relations.
\end{example}
In the case of semiadditive categories, relational constraints actually recover sectorial constraints.

\begin{lemma}[Sectorial constraints]
In a semiadditive category equipped with the monoidal structure given by the biproduct, relational constraints are (equivalent to) sectorial constraints. 
\end{lemma}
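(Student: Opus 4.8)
The plan is to show that, under the identifications already implicit in the two definitions, the composable constraints $\mathcal{L}_{\mathbf{Rel}_{\mathbf{C}}}$ and $\mathcal{L}_{\cat{Sec}_{\cat{C}}}$ are literally the same data, so that the only real work is an equality of subsets of hom-sets which follows from elementary biproduct algebra. First I would fix the set-up: a semiadditive category $\mathbf{C}$ with the biproduct as monoidal structure has zero object $0$ as its unit, which is in particular terminal, so $\mathbf{C}$ is semicartesian, $\discard\colon H\to 0$ is the unique such map, and Example~\ref{example:SemicartesianRelational} applies. The domain $\mathbf{Rel}_{\mathbf{C}}$ and the domain $\cat{Sec}_{\cat{C}}$ of Example~\ref{example:Sectorial Constraints} then have the same objects (finite lists of objects of $\mathbf{C}$; the list in $\cat{Sec}_{\cat{C}}$ carries the extra but inessential datum of a chosen biproduct, which in a semiadditive category always exists and is unique up to canonical isomorphism), the same morphisms (relations between the index sets), the same locally posetal $2$-structure (inclusion of relations), and $\mathcal{L}$ sends each list to its biproduct in either case. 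Forgetting the biproduct choice is therefore a $2$-equivalence $\cat{Sec}_{\cat{C}}\to\mathbf{Rel}_{\mathbf{C}}$ intertwining the two encodings on objects, so it suffices to prove that for every relation $\tau\colon K\to L$ the subsets $\mathcal{L}_{\mathbf{Rel}_{\mathbf{C}}}(\tau)$ and $\mathcal{L}_{\cat{Sec}_{\cat{C}}}(\tau)$ of $\mathbf{C}(\bigoplus_k H_k,\bigoplus_l G_l)$ coincide.

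The technical ingredient is two standard facts about a biproduct. First, once $\otimes$ is identified with $\oplus$, the morphism $d_S=\bigotimes_i S(i)$ of Example~\ref{example:SemicartesianRelational} is the canonical split epimorphism $\bigoplus_i H_i\twoheadrightarrow\bigoplus_{i\notin S}H_i$ that deletes the summands indexed by $S$ (the factors $S(i)=\discard$ land in $0$ and disappear from the biproduct); its canonical section $s$ satisfies $s\circ d_S=\mathrm{id}-\sum_{i\in S}i^i\circ p^i$, so a morphism $h$ out of $\bigoplus_i H_i$ factors through $d_S$ if and only if $h\circ i^j=0$ for every $j\in S$. Second, dually, since the projections $\{p^l\mid l\notin S\}$ are jointly monic, a morphism $m$ into $\bigoplus_l G_l$ satisfies $d_S\circ m=0$ if and only if $p^l\circ m=0$ for every $l\notin S$. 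Both follow immediately from $\sum_i i^i\circ p^i=\mathrm{id}$ and $p^l\circ i^k=\delta_{lk}$.

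Combining these yields the equality. Unpacking~\eqref{eq:SatisfactionRelConstraints}, $f\in\mathcal{L}_{\mathbf{Rel}_{\mathbf{C}}}(\tau)$ means that for each $i\in\texttt{label}(K)$ the morphism $d_{\tau_i}\circ f$ factors through $d_{\{i\}}$; by the first fact (with $S=\{i\}$) this holds iff $d_{\tau_i}\circ f\circ i^i=0$, and by the second fact (with $S=\tau_i$, recalling that $\tau_i=\{l\mid i\overset{\tau}{\sim} l\}$, so that $d_{\tau_i}$ discards exactly the outputs related to $i$) this in turn holds iff $p^l\circ f\circ i^i=0$ for every $l\notin\tau_i$, i.e.\ for every $l$ not related to $i$ by $\tau$. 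Quantifying over $i$, this is precisely the condition ``$p^l\circ f\circ i^k=0$ whenever $k$ and $l$ are not related by $\tau$'' defining membership in $\mathcal{L}_{\cat{Sec}_{\cat{C}}}(\tau)$, and one checks in the same way that the identification respects the $\sqcup$/$\oplus$ monoidal structure. I do not expect a genuine obstacle: the argument rests only on elementary biproduct identities, and notably the domain-atomicity lemma needed for composability of relational constraints is not used here at all. The one point that demands care is keeping the conventions aligned --- in particular that $\tau_i$ records the \emph{outputs that $i$ is related to}, so that $d_{\tau_i}$ deletes exactly those and the two notions match rather than being complementary.
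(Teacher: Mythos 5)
Your proof is correct and follows essentially the same route as the paper's: both directions reduce to the biproduct identities $\sum_l i^l \circ p^l = \mathrm{id}$ and $p^l \circ i^k = \delta_{lk}$, with your two ``standard facts'' (a map factors through $d_S$ iff it kills the injections $i^j$ for $j\in S$; a map is annihilated by $d_S$ iff it is annihilated by each $p^l$ for $l\notin S$) packaging the same computations the paper carries out with explicit sums. The only difference is organizational --- you get a single chain of equivalences rather than two separate inclusion arguments --- and your conventions ($\tau_i$ as the set of outputs related to $i$, so that $d_{\tau_i}$ deletes exactly those) correctly match the paper's.
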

\begin{proof}
First, note that in such a category the unit object (better referred to as the zero object here) is both initial and terminal. Note also that $e_S \circ d_S = \sum_{l \not\in S} i^l \circ p^l $, where  the $p^l$'s are the projections $\oplus_k H^k \to H^l$, and the $i^l$'s are the injections $H^l \to \oplus_k H^k$.

Suppose now that $f \in \cl_{\cat{Sec}_{\cat{C}}}(\tau) $. Then $f = \id \circ f \circ \id = \sum_{k,l} i^l \circ p^l \circ f \circ i^k \circ p^k = \sum_{k,l| k \overset{\tau}{\sim} l} i^l \circ p^l \circ f \circ i^k \circ p^k$. Therefore, fixing an arbitrary $k$, $d_{\tau_k} \circ f = \left( \sum_{l| k \overset{\tau}{\not\sim} l} i^{l} \circ p^{l} \right) \circ \left(\sum_{k',l'| k' \overset{\tau}{\sim} l'} i^{l'} \circ p^{l'} \circ f \circ i^{k'} \circ p^{k'} \right) = \sum_{l| k \overset{\tau}{\not\sim} l} i^{l} \circ p^{l} \circ f \circ \sum_{k'| k' \overset{\tau}{\sim} l}i^{k'} \circ p^{k'} = \sum_{l| k \overset{\tau}{\not\sim} l} i^{l} \circ p^{l} \circ f \circ d_{\{ k'| k' \overset{\tau}{\not\sim} l\}} $. Now for all $l$ such that $k \overset{\tau}{\not\sim} l$, one has $\{k\} \subseteq \{ k'| k' \overset{\tau}{\not\sim} l\}$, and therefore $d_{\{ k'| k' \overset{\tau}{\not\sim} l\}} = d_{\{ k'| k' \overset{\tau}{\not\sim} l\}} \circ e_{\{k\}} \circ d_{\{k\}}$, which leads to $d_{\tau_k} \circ f = f_k' \circ d_{\{k\}}$. Therefore, $f \in \cl_{\cat{Rel}_{\cat{C}}}(\tau) $.

Reciprocally, suppose $f \in \cl_{\cat{Rel}_{\cat{C}}}(\tau) $, and take $k,l$ such that $k \overset{\tau}{\not\sim} l$. Then $p^l \circ f \circ i^k = p^l \circ i^l \circ p^l \circ f \circ i^k = p^l \circ d_{\{l' \neq l\}} \circ f \circ i^k$. Yet $\tau_k \subseteq \{l' \neq l\}$, so $p^l \circ f \circ i^k = p^l \circ d_{\{l' \neq l\}} \circ e_{\tau_k} \circ d_{\tau_k} \circ f \circ i^k = p^l \circ d_{\{l' \neq l\}} \circ e_{\tau_k} \circ f_k' \circ d_{\{k\}} \circ i^k = p^l \circ d_{\{l' \neq l\}} \circ e_{\tau_k} \circ f_k' \circ \left( \sum_{k' \neq k} i^{k'} \circ p^{k'} \circ i^k \right) = 0$, as all the terms in the bracketed sum are null. Therefore, $f \in \cl_{\cat{Sec}_{\cat{C}}}(\tau) $.
\end{proof}

Finally, we note that even though we focused on semicartesian categories here, all of our constructions could be symmetrically applied to cosemicartesian categories, i.e. categories in which the unit is initial. Indeed one can then see $\cat{Rel}$ as encoding constraints for the semicartesian category $\cat{C}^\textrm{op}$, meaning that $\cat{Rel}^\textrm{op} \cong \cat{Rel}$ encodes constraints for $\cat{C}$, of the form: 

\be \tikzfigscale{1.0}{signal_time_sym} \,.\ee

For instance, this yields a theory of relational constraints for the category of sets and functions with respect to the coproduct (i.e. disjoint union), which can be thought of as constraints on which alternatives among the possible inputs of the function can lead to which alternatives among its possible outputs.

\begin{example}[Relational constraint satisfaction]
The prototypical category of sets and functions is equipped with an initial unit object $\emptyset$ for the monoidal structure given by the coproduct  (disjoint union). Using this structure, the corresponding relational constraint can be parsed in the following way. Taking $\tau(i)$ to be the set of indices in the output of $\tau$ which are related to entry $i$ by $\tau$, then
\begin{align*}
    f:\cup_{i}X_{i} \rightarrow \cup_{j}Y_{j} \in \mathcal{L}(\tau:X \rightarrow Y) \iff \forall x \in X_{i} : f(x) \in \cup_{j \in \tau(i)} Y_{j} \,.
\end{align*}
Notably, for lists $X$ and $Y$ built from singletons, this simplifies to 
\begin{align*}
    f:\cup_{i}\{x_{i}\} \rightarrow \cup_{j}\{y_{j}\} \in \mathcal{L}(\tau:X \rightarrow Y) \iff f(x_i) \in \cup_{j \in \tau(i)} \{y_{j}\} \,.
\end{align*}
So for the empty set and the disjoint union, relational constraints simply express some imperfect knowledge about the output elements to which each input is sent.
\end{example}
In principle, one could view the above constraints as essentially constraint satisfaction problems. However, the specification of a constraint to be satisfied by a single explicit relation means all that is required is to check that such a relation is total. A precise definition of constraint satisfaction problems and their expression in quantaloidal and general categorical settings can be found in \cite{fujii2021quantaloidal, Morton2015GeneralizedCircuits}. In Section \ref{sec:CSP}, we will present a faithful generalisation of the spirit of constraint satisfaction problems to a general categorical setting, in which a composition rule is defined at the level of the individual constraints which are specified within a problem.

\subsection{Time-symmetric relational constraints and their properties} \label{sec:TimeSymmetry}

When the unit object is both initial and terminal, the possibility to encode constraints in both the backward direction and the forward one leads to the question of whether those options are equivalent procedures. When they indeed are, it leads to valuable simplifications. Let us first define what we mean by such an equivalence.

\begin{definition} \label{def:TimeSymmetric}
Let $(\mathbf{C},\otimes, I)$ be a symmetric semicartesian and cosemicartesian category. We equip $\cat{Rel}$ with its standard dagger structure given by the relational converse, and we call ${\cl'}_{\cat{Rel}_{\cat{C}}}$ the constraint encoding on $\cat{C}$ obtained from the fact that $\cat{Rel}$ encodes constraints for $\cat{C}^\textrm{op}$. We say that relational constraints on $\mathbf{C}$ are \emph{time-symmetric} if 

\be \forall \tau, \left(\cl_{\cat{Rel}_{\cat{C}}}(\tau) \right)^\textrm{op}:= \{f^\textrm{op} | f \in \cl_{\cat{Rel}_{\cat{C}}}(\tau) \} = {\cl'}_{\cat{Rel}_{\cat{C}}}(\tau^\dagger) \,. \ee

\end{definition}

Two subcategories of the category of quantum channels which have terminal and initial monoidal units are the category $\mathbf{unitalQC}$ of `unital quantum channels` (quantum channels which send maximally mixed states to maximally mixed states) and the category $\mathbf{mixU}$ of `unitary channels and noise' (generated by unitary channels together with completely mixed states and discarding effects). Whilst relational constraints for $\mathbf{mixU}$ are time-symmetric, relational constraints for $\mathbf{unitalQC}$ are not; this is why a simplification can be made for the presentation of relational constraints over $\mathbf{mixU}$, but not for $\mathbf{unitalQC}$.

\begin{lemma} \label{lem:SemicartWithInitial}
If a relational theory of constraints is time-symmetric, the condition for the satisfaction of constraints can be simplified: the action of $\mathcal{L}_{Rel_\cat{C}}$, as defined in (\ref{eq:SatisfactionRelConstraints}), can be rewritten as
 \be \label{eq:SactisfactionRelConstraints when time-symmetric} \mathcal{L}_{\mathbf{Rel}_{\mathbf{C}}}(\tau: K \rightarrow L) = \{f \, | \, \forall i,j \textrm{ s.t } i \overset{\tau}{\not\sim} j, \, \exists f'_{ij} \textrm{ s.t } :d_{\texttt{label}(L) \setminus \{j\}} \circ f = f_{ij}' \circ d_{\{i\}}\} \, . \ee

\end{lemma}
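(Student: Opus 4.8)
The plan is to prove the two inclusions between the set $\mathcal{L}_{\mathbf{Rel}_{\mathbf{C}}}(\tau)$ of (\ref{eq:SatisfactionRelConstraints}) and the set cut out by the right-hand side of (\ref{eq:SactisfactionRelConstraints when time-symmetric}). Write $\bar\jmath := \texttt{label}(L)\setminus\{j\}$. The inclusion "$\subseteq$" (the original condition implies the simplified one) needs no time-symmetry at all: if $i\overset{\tau}{\not\sim}j$ then $\tau_i\subseteq\bar\jmath$, so $d_{\bar\jmath}=d_{\bar\jmath\setminus\tau_i}\circ d_{\tau_i}$, and post-composing $d_{\tau_i}\circ f=f'_i\circ d_{\{i\}}$ with $d_{\bar\jmath\setminus\tau_i}$ gives $d_{\bar\jmath}\circ f=(d_{\bar\jmath\setminus\tau_i}\circ f'_i)\circ d_{\{i\}}$, which is the desired clause with $f'_{ij}:=d_{\bar\jmath\setminus\tau_i}\circ f'_i$.

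The substantive direction is "$\supseteq$", and this is where time-symmetry is indispensable. The obvious attempt --- take the simplified condition and try to "bundle the outputs" --- is doomed: in a general semicartesian category, each output $L_l$ being individually insensitive to an input $K_i$ does not imply that the joint bundle of outputs unrelated to $K_i$ is insensitive to $K_i$, because correlations among those outputs that do depend on $K_i$ can be marginally invisible. The trick is instead to pass through the time-reversed form of the constraint, where the bundling happens on the \emph{input} side --- which is legitimate, being governed by $\epsilon\circ\eta=1_I$ --- and is in fact executed for free by the time-symmetry identity. Concretely: assume $f$ satisfies (\ref{eq:SactisfactionRelConstraints when time-symmetric}). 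For each $j\in\texttt{label}(L)$ I introduce the relaxation $\rho_j:K\to L$ of $\tau$ that forgets every constraint except those of the form "$k$ is unrelated to $j$", i.e. $k\overset{\rho_j}{\sim}l \iff (l\neq j \textrm{ or } k\overset{\tau}{\sim}j)$. One checks directly that (\ref{eq:SactisfactionRelConstraints when time-symmetric}) for $\tau$ forces $f\in\mathcal{L}_{\mathbf{Rel}_{\mathbf{C}}}(\rho_j)$: the clauses of (\ref{eq:SatisfactionRelConstraints}) for $\rho_j$ at inputs $k$ with $k\overset{\tau}{\sim}j$ are vacuous (then $(\rho_j)_k=\texttt{label}(L)$ and $d_{\texttt{label}(L)}\circ f$ is the unique effect, which factors through anything), while at inputs $k$ with $k\overset{\tau}{\not\sim}j$ one has $(\rho_j)_k=\bar\jmath$ and the clause $d_{\bar\jmath}\circ f=f'_{kj}\circ d_{\{k\}}$ is exactly a hypothesis.

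Now I apply time-symmetry to $\rho_j$: $f\in\mathcal{L}_{\mathbf{Rel}_{\mathbf{C}}}(\rho_j)$ yields $f^{\textrm{op}}\in{\cl'}_{\mathbf{Rel}_{\mathbf{C}}}(\rho_j^\dagger)$, i.e. $f$ satisfies the $\mathbf{C}^{\textrm{op}}$-relational constraint of $\rho_j^\dagger$. Unpacking that constraint --- in which the discards of $\mathbf{C}^{\textrm{op}}$ become canonical-state re-preparation maps $e_S$ in $\mathbf{C}$ (canonically defined here since $I$ is initial, so states are unique; these are the $e_S$ from the proof of Lemma \ref{lemma:Domain Atomicity}) and the composition order reverses --- and using the index identity $(\rho_j^\dagger)_j=\{k\mid k\overset{\tau}{\sim}j\}=(\tau^\dagger)_j$, the clause at the output $j$ asserts precisely that there is some $g_j$ with $f\circ e_{(\tau^\dagger)_j}=e_{\{j\}}\circ g_j$. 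Ranging over all $j$, this is exactly membership $f^{\textrm{op}}\in{\cl'}_{\mathbf{Rel}_{\mathbf{C}}}(\tau^\dagger)$, whence, by time-symmetry once more, $f\in\mathcal{L}_{\mathbf{Rel}_{\mathbf{C}}}(\tau)$. I expect the only delicate points to be bookkeeping rather than conceptual: correctly translating the $\mathbf{C}^{\textrm{op}}$-relational constraint back into a statement in $\mathbf{C}$ (keeping straight that discards turn into canonical-state preparations and that composition flips), and verifying the index identity $(\rho_j^\dagger)_j=(\tau^\dagger)_j$; neither uses more than the definitions already in the text.
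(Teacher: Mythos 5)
Your proof is correct, and while it shares the essential strategy of the paper's proof -- the hard inclusion is obtained by passing to $\cat{C}^\textrm{op}$ via two applications of the time-symmetry identity, so that the ``bundling'' of individually-insensitive outputs is carried out on the input side of the reversed picture -- it executes that strategy by a genuinely different device. The paper fixes an \emph{input} $i$, coarse-grains both partitions into two blocks to form relations $\lambda_{ij}$ and $\mu_i$, applies time-symmetry to each $\lambda_{ij}$ separately, and then must invoke domain-atomicity (Lemma \ref{lemma:Domain Atomicity}, applied in $\cat{C}^\textrm{op}$) to merge the resulting per-$j$ statements into the single statement for $\mu_i^\dagger$ before translating back. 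You instead fix an \emph{output} $j$ and relax $\tau$ to the full-partition relation $\rho_j$; since every per-input clause of $\mathcal{L}(\rho_j)$ is either vacuous or literally one of the hypotheses of (\ref{eq:SactisfactionRelConstraints when time-symmetric}), a single application of time-symmetry to $\rho_j$ hands you the $j$-th clause of ${\cl'}_{\cat{Rel}_{\cat{C}}}(\tau^\dagger)$ directly, via the index identity $(\rho_j^\dagger)_j = (\tau^\dagger)_j$. This buys you two things: no coarse-graining of the objects of $\cat{Rel}_{\cat{C}}$ is needed, and the explicit op-side atomicity argument disappears (it is absorbed into the definition of $\cl'$ evaluated at the full relation $\rho_j^\dagger$). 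The cost is negligible -- only the routine verification that $f \in \mathcal{L}(\rho_j)$ and the unwinding of the $\cat{C}^\textrm{op}$-clause, both of which you handle correctly. Your direct treatment of the easy inclusion (factoring $d_{\texttt{label}(L)\setminus\{j\}} = d_{(\texttt{label}(L)\setminus\{j\})\setminus\tau_i}\circ d_{\tau_i}$) also bypasses the appeal to Lemma \ref{lemma:Domain Atomicity} that the paper makes there, and is equally valid.
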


\begin{proof}
Let us call $\tilde{\mathcal{L}}_{Rel_\cat{C}}(\tau)$ the set defined in (\ref{eq:SactisfactionRelConstraints when time-symmetric}), and prove that, when the relational constraints are time-symmetric, it is equal to $\mathcal{L}_{Rel_\cat{C}}(\tau)$ as defined in (\ref{eq:SatisfactionRelConstraints}). First, suppose $f \in \mathcal{L}_{Rel_\cat{C}}(\tau)$. Then, for given $i$ and $j$ not related by $\tau$, one has by Lemma \ref{lemma:Domain Atomicity}: $d_{\texttt{label}(L) \setminus \{j\}} \circ f = f \circ d_{\{i'| \tau_{i'} \subseteq \texttt{label}(L) \setminus \{j\}\}}$; yet $\{i\} \subseteq \{i'| \tau_{i'} \subseteq \texttt{label}(L) \setminus \{j\}\}$, so $d_{\texttt{label}(L) \setminus \{j\}} \circ f = f \circ d_{\{i'| \tau_{i'} \subseteq \texttt{label}(L) \setminus \{j\}\}} \circ e_{\{i\}} \circ d_{\{j\}}$ and thus $f  \in \tilde{\mathcal{L}}_{Rel_\cat{C}}(\tau)$. Therefore $\mathcal{L}_{Rel_\cat{C}}(\tau) \subseteq \tilde{\mathcal{L}}_{Rel_\cat{C}}(\tau)$; note that this inclusion doesn't depend on the constraints being time-symmetric.

Suppose now that the relational constraints are time-symmetric, and take $f \in \tilde{\mathcal{L}}_{Rel_\cat{C}}(\tau)$. Fix an $i \in \texttt{label}(K)$. Then, for $j$ not related to $i$ by $\tau$, one has $f \in \mathcal{L}_{Rel_\cat{C}}(\lambda_{ij})$, where $\lambda_{ij} : (K_i, \bigotimes_{i' \neq i} K_{i'}) \to (L_j, \bigotimes_{j' \neq j} L_{j'})$ is defined by $K_i \overset{\lambda_{ij}}{\not\sim} L_j$. Therefore $f^\tr{op} \in \cl_{\cat{Rel}_{\cat{C}^\textrm{op}}}(\lambda_{ij}^\dagger)$, so $e_{\texttt{label}(K) \setminus \{i\}}^\tr{op} \circ f^\tr{op} = \tilde{f}_{ij}^\tr{op} \circ e_{\{j\}}^\tr{op}$. As this is true for all the $j \not\in \tau_i$, this leads to  $e_{\texttt{label}(K) \setminus \{i\}}^\tr{op} \circ f^\tr{op} = F_{ij}^\tr{op} \circ e_{\texttt{label}(L) \setminus \tau_i}^\tr{op}$. Therefore $f^\tr{op} \in \mathcal{L}_{\cat{Rel}_{\cat{C}^\textrm{op}}}(\mu_{i}^\dagger)$, where $\mu_i: (K_i, \bigotimes_{i' \neq i} K_{i'}) \to (\bigotimes_{j \in \tau_i j} L_{j'}, \bigotimes_{j \not\in \tau_i j} L_{j'})$ is defined by $K_i \overset{\mu_{i}}{\not\sim} \bigotimes_{j \not\in \tau_i} L_{j'}$. Thus $f \in \mathcal{L}_{Rel_\cat{C}}(\mu_{i})$, i.e. $d_{\tau_i} \circ f = f_i' \circ d_{\{i\}}$. As this is true for any $i$, $f  \in \mathcal{L}_{Rel_\cat{C}}(\tau)$.
\end{proof}

Lemma \ref{lem:SemicartWithInitial} has an important interpretation: is sufficient to check \textit{separately} each absence of arrow in order to verify that a morphism satisfies the constraint given by a relation. The constraints thus become collection of independent statements, each of these statements being the absence of one arrow in the relation. This important property, as well as the connection with discussions of causal structure in quantum channels, will be further discussed and formalised in Section \ref{sec:Intersectable Constraints}.

It is worth noticing that the proof of Lemma \ref{lem:SemicartWithInitial} essentially relies on the fact that, when the relational theory of constraints is time-symmetric, then by symmetry one does not only have domain-atomicity, but also \textit{codomain-atomicity}: if several output elements are not connected to some subset of inputs, then one can say that the monoidal product of these output elements isn't connected to this subset of inputs either. The conjunction of domain-atomicity and codomain-atomicity is what leads to the simplified form (\ref{eq:SactisfactionRelConstraints when time-symmetric}) for constraints.

\section{Intersectable constraints}\label{sec:Intersectable Constraints}

Whenever talking about constraints, a natural notion arises: that of intersecting two constraints (i.e. asking that both be satisfied). This notion of \textit{intersectability} has a natural description in the existence of meets (given by the intersection of sets) with respect to the locally posetal structure of the powerset category $\cp[\cat{C}]$. It is then natural to ask whether this notion can also be expressed in the constraint category \cat{Con}. For instance, relational constraints admit a simple notion of intersection:

\be \tikzfigscale{1.0}{meet} \,,\ee

so it is natural to ask whether this intersection simulates the following intersection of sets within the powerset category:

\be \tikzfigscale{1.0}{meet_2} \,. \ee

Here, we will describe how such a property can be formalised in the abstract, how this simplifies the description of constraints, and whether this property is satisfied in our main examples.

Seeing constraints as forming a partially ordered set, the ability to combine them corresponds to that of taking their \textit{meet} $\wedge$. Categorically, meets can be simply understood as products in poset categories. This leads to the following definition.

\begin{definition}
A 2-poset \cat{D} is a \emph{2-meet-semilattice} if for every $A,B$ the poset $\cat{D}(A,B)$ has products.
\end{definition}

    In particular, the powerset category $\mathcal{P}[\cat{C}]$ is a 2-meet-semilattice, with the meet given by the intersection of sets $\cap$. The question of having a faithful way of intersecting constraints then becomes that of having a 2-meet-semilattice structure on \cat{Con} that is compatible with that of $\mathcal{P}[\cat{C}]$. % as-well as the specific example of $\mathbf{Rel}$ generalisations such as internal relations, cartesian bicategories, and allegories, \cite{Carboni1987CartesianI,Carboni2008CARTESIANII, Freyd1990CategoriesAllegories} are expected to be a fruitful source of examples of constraint domains which allow to talk about intersectability. Furthermore recent works in the field of applied category theory have exhibited a variety of uses for such generalised categories of relations, in particular for models of logic \cite{bonchi_et_al:LIPIcs:2018:9680, Heunen2015CategoriesTheory, Haydon2020CompositionalLogic}, leaving the opportunity now to explore the cases in which such logics can be used to constrain and reason about other categories. 

\begin{definition}
An \emph{intersectable} composable constraint for a category \cat{C} is given by a lax 2-functor $\cl$ from a 2-meet-semilattice \cat{Con} into the 2-meet-semilattice $\mathcal{P}[\cat{C}]$, such that products are preserved:

\be \forall \tau, \sigma, \quad \cl(\tau \wedge \sigma) = \cl(\tau) \cap \cl(\sigma) \,. \ee
\end{definition}

Note that for any composable constraint, the inclusion $\cl(\tau \wedge \sigma) \subseteq \cl(\tau) \cap \cl(\sigma)$ is guaranteed because of the lax 2-functoriality; what is at stake here is the reverse inclusion.

A major advantage of working with intersectable constraints is that they can be checked by solely looking at \textit{meet-generators} of the hom-set semilattice. A similar notion can be found in the study of generative effects from a categorical perspective \cite{adam2019generativity,fong2018seven}, where the compatibility of an observation with the action of joining systems together is expressed in terms of the preservation of joins by that observation. In a given meet-semilattice, a set $\{\sigma_i\}_{i \in I}$ of elements is a set of meet-generators if any other element is a meet of a subset of them, i.e., $\forall \tau, \exists I_\tau \subseteq I, \tau = \bigwedge_{i \in I_\tau} \sigma_i$. Typically, for finite relations, the relations with exactly one arrow missing meet-generate the hom-sets.

\begin{example} \label{example:GeneratorsRel}
In the category \Rel of finite relations, any hom-set $\Rel(A,B)$ is meet-generated by $\{\sigma_{ab}\}_{a \in A, b \in B}$, where for given $a,b$, $\sigma_{ab}$ is the relation corresponding to the subset $A \times B \setminus \{(a,b)\}$ of $A \times B$. 
\end{example}

\begin{proof}
Any relation $\tau: A \to B$, corresponding to a subset $\tau \subseteq A \times B$, satisfies $\tau = \bigwedge_{(a,b) \not\in \tau} \sigma_{ab}$.
\end{proof}

The point is now that any constraint in an intersectable theory of constraints is nothing more than the combination of the constraints corresponding to the meet-generators: it becomes a collection of independent elementary statements.

\begin{lemma}
If \cat{Con} is a theory of intersectable composable constraints for \cat{C}, then for a hom-set $\cat{Con}(A,B)$ and meet-generators $\sigma_i$ of this hom-set, one has $\cl(\tau = \bigwedge_{i \in I_\tau} \sigma_i) = \bigcap_{i \in I_\tau} \cl(\sigma_i)$.
\end{lemma}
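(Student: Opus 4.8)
The plan is to obtain this as an immediate consequence of the defining equation of an intersectable composable constraint, $\cl(\tau \wedge \sigma) = \cl(\tau) \cap \cl(\sigma)$, promoted from binary meets to the finite meets appearing in a meet-generating decomposition by a routine induction. The key preliminary remark is that meets in a poset, when they exist, are unique, so the hypothesis $\tau = \bigwedge_{i \in I_\tau} \sigma_i$ is a genuine equality of $1$-morphisms in $\cat{Con}$, and $\bigwedge_{i \in I_\tau} \sigma_i$ can be computed as an iterated binary meet (which exists since $\cat{Con}$ is a $2$-meet-semilattice); this is exactly what lets the induction run.

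First I would fix a finite nonempty index set $I_\tau$ and argue by induction on its cardinality. The base case $|I_\tau| = 1$ is trivial. For the inductive step, pick $i_0 \in I_\tau$ and write $\tau = \sigma_{i_0} \wedge \bigl( \bigwedge_{i \in I_\tau \setminus \{i_0\}} \sigma_i \bigr)$, using associativity and commutativity of the binary meet (valid because the meet of a finite family of poset elements is independent of bracketing up to equality). Applying product preservation to the outer binary meet gives $\cl(\tau) = \cl(\sigma_{i_0}) \cap \cl\bigl( \bigwedge_{i \in I_\tau \setminus \{i_0\}} \sigma_i \bigr)$, and the inductive hypothesis rewrites the second factor as $\bigcap_{i \in I_\tau \setminus \{i_0\}} \cl(\sigma_i)$, yielding $\cl(\tau) = \bigcap_{i \in I_\tau} \cl(\sigma_i)$.

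The only point that requires genuine care is the cardinality of $I_\tau$: a $2$-meet-semilattice only guarantees binary (hence finite) meets in each hom-poset, and the product-preservation axiom is stated for binary meets, so the argument is confined to finitely meet-generated hom-sets. This matches the motivating case — for finite relations, Example \ref{example:GeneratorsRel} exhibits a finite meet-generating family $\{\sigma_{ab}\}$ for each hom-set — so I would either state the lemma for finitely meet-generated hom-sets, or, if $\cat{Con}$ carries and $\cl$ preserves arbitrary meets, note that the same computation passes to the transfinite case. In either case the mathematical content is the elementary induction above, and I do not expect any real obstacle beyond this bookkeeping; the lax inclusion $\cl(\tau) \subseteq \bigcap_i \cl(\sigma_i)$ is in any event automatic from lax $2$-functoriality, so all the work is in the reverse inclusion supplied by the preservation axiom.
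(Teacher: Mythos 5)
Your proof is correct and takes essentially the same route as the paper, whose entire proof is the single word ``Direct'' --- i.e.\ iterate the binary preservation axiom $\cl(\tau \wedge \sigma) = \cl(\tau) \cap \cl(\sigma)$ over the meet-decomposition, exactly your induction. Your caveat about the finiteness of $I_\tau$ (binary meets only give finite iterated meets) is a legitimate piece of bookkeeping the paper leaves implicit; it is harmless in the motivating examples, where the hom-sets of finite relations are themselves finite.
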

\begin{proof}
Direct.
\end{proof}

Such a result greatly simplifies, in practice, the computation of whether a given morphism satisfies a given constraint. In particular, when the theory of constraints is that of finite relations, it is sufficient to check separately each absence of arrow whenever the constraints are intersectable. This is possible in some cases, although not for all relational constraints.

\begin{example}
Sectorial constraints are intersectable.
\end{example}
\begin{proof}
This is baked into their definition in Example \ref{example:Sectorial Constraints}, which can be rewritten as $\cl(\lambda) = \bigcap_{(a,b) \not\in \lambda} \cl(\sigma_{ab})$, with $\forall a,b, \, \cl(\sigma_{ab}) := \{f \, | \, p_B^l \circ f \circ i_A^k = 0\}$: i.e., we already defined sectorial constraints as intersections of the constraints given by the meet-generators of the hom-sets described in Example \ref{example:GeneratorsRel}.
\end{proof}

\begin{example}
Relational constraints are not necessarily intersectable.
\end{example}
\begin{proof}
A counter-example is the theory of relational constraints (better called signalling constraints in this case) for noisy quantum channels, with respect to the tensor product \cite{barrett2019}: looking at the constraints on channels $A \to B_1 \otimes B_2$, and taking, for $i \in \{1,2\}$, $\sigma_i$ to be the relation not featuring an arrow $A \to B_i$, there exist channels in $ \cl(\sigma_1) \cap \cl(\sigma_2) \setminus \cl(\sigma_1 \wedge \sigma_2)$, for example the classical channel sending $\ket{0}\bra{0}$ to $\ket{0}\bra{0} \otimes \ket{0}\bra{0} + \ket{1}\bra{1} \otimes \ket{1}\bra{1}$ and $\ket{1}\bra{1}$ to $\ket{0}\bra{0} \otimes \ket{1}\bra{1} + \ket{1}\bra{1} \otimes \ket{0}\bra{0}$.
\end{proof}
The above counterexample is a member of the theory $\mathbf{unitalQC}$ of `unital quantum channels', a theory for which relational constraints are not time-symmetric.  
\begin{theorem}
If a theory of relational constraints is time-symmetric, then these constraints are intersectable.
\end{theorem}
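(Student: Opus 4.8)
The plan is to show that the theorem is essentially a corollary of Lemma~\ref{lem:SemicartWithInitial}. First I would recall from Example~\ref{example:GeneratorsRel} that each hom-set $\mathbf{Rel}_{\mathbf{C}}(K,L)$, which is the relation lattice $\Rel(\texttt{label}(K),\texttt{label}(L))$, is a $2$-meet-semilattice under intersection of relations and is meet-generated by the relations $\sigma_{ij}$ obtained by deleting a single pair $(i,j)$ from the full relation; explicitly, $\tau=\bigwedge_{(i,j)\notin\tau}\sigma_{ij}$ for every $\tau$. The first real step is to observe that the time-symmetric reformulation (\ref{eq:SactisfactionRelConstraints when time-symmetric}) provided by Lemma~\ref{lem:SemicartWithInitial} exhibits $\mathcal{L}_{\mathbf{Rel}_{\mathbf{C}}}(\tau)$ as a conjunction of clauses indexed by the missing pairs: it states that $f\in\mathcal{L}_{\mathbf{Rel}_{\mathbf{C}}}(\tau)$ iff for every $(i,j)\notin\tau$ there is some $f'_{ij}$ with $d_{\texttt{label}(L)\setminus\{j\}}\circ f=f'_{ij}\circ d_{\{i\}}$. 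I would then check, directly from the unsimplified definition (\ref{eq:SatisfactionRelConstraints}) applied to $\sigma_{ij}$, that this single clause is exactly the statement $f\in\mathcal{L}_{\mathbf{Rel}_{\mathbf{C}}}(\sigma_{ij})$: for $\sigma_{ij}$ one has $(\sigma_{ij})_k=\texttt{label}(L)$ when $k\neq i$, so the $k\neq i$ components of (\ref{eq:SatisfactionRelConstraints}) are automatically met because $d_{(\sigma_{ij})_k}\circ f$ is then forced to be the unique effect (terminality of $I$), while the $k=i$ component is precisely the displayed clause. Combining these two observations gives, for every $\tau$, the identity $\mathcal{L}_{\mathbf{Rel}_{\mathbf{C}}}(\tau)=\bigcap_{(i,j)\notin\tau}\mathcal{L}_{\mathbf{Rel}_{\mathbf{C}}}(\sigma_{ij})$.

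Granting that identity, intersectability is pure lattice bookkeeping. For $\tau,\sigma\colon K\to L$ the meet $\tau\wedge\sigma$ is the set-theoretic intersection $\tau\cap\sigma\subseteq\texttt{label}(K)\times\texttt{label}(L)$, so $(i,j)\notin\tau\wedge\sigma$ iff $(i,j)\notin\tau$ or $(i,j)\notin\sigma$. Hence $\mathcal{L}_{\mathbf{Rel}_{\mathbf{C}}}(\tau\wedge\sigma)=\bigcap_{(i,j)\notin\tau\wedge\sigma}\mathcal{L}_{\mathbf{Rel}_{\mathbf{C}}}(\sigma_{ij})=\big(\bigcap_{(i,j)\notin\tau}\mathcal{L}_{\mathbf{Rel}_{\mathbf{C}}}(\sigma_{ij})\big)\cap\big(\bigcap_{(i,j)\notin\sigma}\mathcal{L}_{\mathbf{Rel}_{\mathbf{C}}}(\sigma_{ij})\big)=\mathcal{L}_{\mathbf{Rel}_{\mathbf{C}}}(\tau)\cap\mathcal{L}_{\mathbf{Rel}_{\mathbf{C}}}(\sigma)$, which is exactly the defining condition for $\mathcal{L}_{\mathbf{Rel}_{\mathbf{C}}}$ to be an intersectable composable constraint, $\mathcal{L}_{\mathbf{Rel}_{\mathbf{C}}}$ already being a lax $2$-functor between $2$-meet-semilattices by Example~\ref{example:SemicartesianRelational}. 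The degenerate generator --- the full relation, which is the empty meet --- causes no trouble: its clause-set is empty and $\mathcal{L}_{\mathbf{Rel}_{\mathbf{C}}}$ of it is all of $\mathbf{C}(\bigotimes_i K_i,\bigotimes_j L_j)$, matching the empty intersection.

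I expect the only genuine work to lie in the middle of the first paragraph: matching a single clause of (\ref{eq:SactisfactionRelConstraints when time-symmetric}) with $\mathcal{L}_{\mathbf{Rel}_{\mathbf{C}}}(\sigma_{ij})$ via (\ref{eq:SatisfactionRelConstraints}), where one must be careful about which outputs $d_{(\sigma_{ij})_k}$ discards versus keeps and about the appeal to terminality. It is worth flagging that this matching step itself uses only semicartesianness; time-symmetry enters the proof solely through the invocation of Lemma~\ref{lem:SemicartWithInitial}, and everything downstream of the displayed identity $\mathcal{L}_{\mathbf{Rel}_{\mathbf{C}}}(\tau)=\bigcap_{(i,j)\notin\tau}\mathcal{L}_{\mathbf{Rel}_{\mathbf{C}}}(\sigma_{ij})$ is routine.
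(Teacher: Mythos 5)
Your proposal is correct and follows the paper's own route: the paper's proof is precisely the observation that Lemma \ref{lem:SemicartWithInitial} exhibits $\mathcal{L}_{\mathbf{Rel}_{\mathbf{C}}}(\tau)$ as the intersection $\bigcap_{(i,j)\notin\tau}\mathcal{L}_{\mathbf{Rel}_{\mathbf{C}}}(\sigma_{ij})$ over the meet-generators, from which intersectability is immediate. You have simply spelled out the clause-matching and lattice bookkeeping that the paper leaves implicit, and your handling of the $k\neq i$ clauses via terminality is the right justification.
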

\begin{proof}
This can be seen in Lemma \ref{lem:SemicartWithInitial}, which in fact directly proves that in this case relational constraints can be expressed as intersections of the constraints given by the meet-generators.
\end{proof}

In particular, the discussion of \cite{barrett2019} on the causal structure of unitary quantum channels can be recast into the statement that relational constraints for the theory $\mathbf{mixU}$ of `unitary channels and noise' are time-symmetric and therefore intersectable. An important conceptual consequence in this case is that the causal structure of unitary channels can be fully described by the absence of causal connections between individual input and output factors: the absences of causal connections between \textit{groups} of input factors and groups of output factors can be derived from this data via the intersectability of the signalling constraints. As mentioned in Section \ref{sec:TimeSymmetry}, this can also be ascribed to the presence of both domain-atomicity and codomain-atomicity in this case, the latter being derivable from the former by time-symmetry.

\section{General categorical constructions} \label{sec:General Constructions}
Finally, we present generalisations in the language of classical category theory both for relational constraints and for constraint satisfaction problems. In turn we present the latter in their usual form.

\subsection{Generalised approach to relational constraints}
The construction of relational constraints can be recognised as a consequence of a higher level categorical construction. We will first outline the key categorical features of the construction of relational constraints which can be used to construct abstract composable constraints, and then prove that those features are indeed sufficient for such a construction. Let $\mathbf{Con}$ be an arbitrary category, and let $\mathbf{Cat}_i$ denote the full subcategory of $\mathbf{Cat}$ whose objects are categories with initial objects. A functor of type $\mathcal{K}:\mathbf{Con} \longrightarrow \mathbf{Cat}_i$ assigns to each object of $\mathbf{Con}$ a category with an initial object. For example, for any semicartesian monoidal category $\mathbf{C}$ there is a functor $\mathcal{K}:\mathbf{Rel}_{C}^{op} \longrightarrow \mathbf{Cat}_i$ which assigns to each list $H := [H_1, \dots , H_n]$ the category with objects given by sublists of $H$ and morphisms given by inclusion. On morphisms, $\mathcal{K}$ sends $\tau:A \rightarrow B$ to the functor $\mathcal{K}(\tau):\mathcal{K}(B) \rightarrow \mathcal{K}(A)$ given on objects by $\mathcal{K}(\tau)(S):= \texttt{pre}_{\tau}(S) := \{i:\forall j \textrm{ s.t } i \overset{\tau}{\sim} j \textrm{ then }j \in S\}$ and sending each unique inclusion $S \subseteq T$ to the unique inclusion $\texttt{pre}_{\tau}(S) \subseteq \texttt{pre}_{\tau}(T)$.

\be \tikzfig{figures/signal_functor_1}\ee 

We interpret $\mathcal{K}(A)$ as being the category which represents all of the choices of subsystems that could be removed from $A$, with each arrow $S \rightarrow S'$ representing the instruction to transition from having removed $S$ to having furthermore removed $S'$. In our example, the initial object of $\mathcal{K}(A)$ is the empty set $\emptyset$. Initiality means that there is one and only one arrow $\emptyset \rightarrow S$ for each $S$: intuitively, there is only one way to throw away each system.

The second categorical ingredient required is the notion of a cone. The functor $\mathcal{K}:\mathbf{Rel}_{C}^{op} \longrightarrow \mathbf{Cat}_{i}$ can be completed by the inclusion $ \mathbf{Cat}_i \subseteq \mathbf{Cat}$ and the inclusion $\texttt{ob}[\mathbf{Rel}_{C}^{op}] \subseteq \mathbf{Rel}_{C}^{op}$ to a functor $\mathcal{K}_{\subseteq}:\texttt{ob}[\mathbf{Rel}_{C}^{op}] \longrightarrow \mathbf{Cat}$, where $\texttt{ob}[\mathbf{M}]$ for some category $\mathbf{M}$ is the trivial discrete category with objects given by the objects of $\mathbf{M}$. In our previous example, there exists a cone over this functor, with its apex given by $\mathbf{C}$. Unpacking this claim, there is a functor $\mathcal{F}_{A}: \mathcal{K}(A) \longrightarrow \mathbf{C}$ for every object $A$ of $\mathbf{Rel}_{C}$: for some fixed $A$ this functor is given in our example by assigning to each object $S$ of $\mathcal{K}(A)$ the object $\mathcal{F}_{A}(S) = \otimes_{A_i \in \bar{S}}A_i$, where $\bar{S}$ is the complement of $S$ in $A$, and by assigning to each morphism $S \subseteq T$ in $\mathcal{K}(A)$ a morphism $\mathcal{F}_{A}(S \subseteq T):\otimes_{A_i \in \bar{S}}A_i \longrightarrow \otimes_{A_i \in \bar{S}}A_i$ given by discarding all of those elements of $T$ which are not in $S$:

\be \tikzfig{figures/signal_functor_2}\,.\ee 

This assignment is indeed functorial, that is, $\mathcal{F}_{A}(S \subseteq T \subseteq V) = \mathcal{F}_{A}(S \subseteq T) \circ \mathcal{F}_{A}(T \subseteq V)$. It turns out that the structure of a $\mathbf{C}$-valued cone over a functor which factors in the way mentioned above, is all that is required construct a composable constraint encoding which, for the specific functors mentioned above, reduces exactly to relational constraints for semicartesian categories.
\begin{theorem}[Cones over factored functors]
For every pair 
$(\mathbf{C},\mathbf{Con})$ of categories equipped with
\begin{itemize}
    \item a functor $\mathcal{K}: \mathbf{Con}^{op} \rightarrow \mathbf{Cat}_{i}$;
    \item a cone with apex $\mathbf{C}$ for the completion of the functor $\mathcal{K}$ to $ \mathcal{K}_{\subseteq} :\texttt{ob}[\mathbf{Con}_{C}^{op}] \longrightarrow \mathbf{Cat}$,
\end{itemize}
there exists is a composable constraint encoding $\mathcal{L}:\mathbf{Con} \longrightarrow \mathcal{P}[\mathbf{C}]$ defined by taking each object $A$ of $\mathbf{Con}$ to the object $\mathcal{L}(A) := \mathcal{F}_{A}(\mathcal{K}_{A}^{i})$ where $\mathcal{K}_{A}^{i}$ is the initial object of the category $K(B)$, and by taking each morphism $\tau : A \rightarrow B$ to the set $\mathcal{L}(\tau)$ of morphisms $f:\mathcal{F}_{A}(\mathcal{K}_{A}^{i}) \rightarrow \mathcal{F}_{B}(\mathcal{K}_{B}^{i})$ such that, for every $S$, there exists $f_{S}$ which makes the following square commute:
\be \begin{tikzcd}
\mathcal{F}_{C}(\mathcal{K}_{B}^{i}) \arrow[rr, "\mathcal{F}_{B}(u_S)", rightarrow]                                                  &  & \mathcal{F}_{B}(S)                                                 \\
\mathcal{F}_{A}(\mathcal{K}_{A}^{i}) \arrow[u, "f", rightarrow] \arrow[rr, "\mathcal{F}_{A}(u_{\mathcal{K}_{\tau}(S)})", rightarrow] &  & \mathcal{F}_{A}\mathcal{K}_{\tau}(S) \arrow[u, "f_S"', rightarrow]
\end{tikzcd}\,,\ee 
where for each object $X$ of a category equipped with an initial object, $u_X$ is the unique morphism from that initial object to $X$. 
\end{theorem}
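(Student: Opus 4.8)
The plan is to verify directly that the assignment $\mathcal{L}$ described in the statement is a composable constraint encoding, i.e.\ a lax functor from $\mathbf{Con}$ --- regarded as the locally discrete $2$-category on the given ordinary category --- into $\mathcal{P}[\mathbf{C}]$. Since all $2$-cells of $\mathbf{Con}$ are identities and $\mathcal{P}[\mathbf{C}]$ is locally posetal (so every coherence axiom for a lax functor holds automatically), the conditions with content are only: (i) $\mathcal{L}$ sends objects to objects and morphisms to hom-subsets of the right type; (ii) the lax unit inclusion $\{\mathrm{id}\}\subseteq\mathcal{L}(\mathrm{id})$; and (iii) the lax composition inclusion $\mathcal{L}(\tau)\circ\mathcal{L}(\lambda)\subseteq\mathcal{L}(\tau\circ\lambda)$. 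First I fix notation: write $0_A$ for the initial object $\mathcal{K}^i_A$ of $\mathcal{K}(A)$, $u_X : 0 \to X$ for the unique arrow out of the appropriate initial object, $\mathcal{K}_\tau : \mathcal{K}(Y)\to\mathcal{K}(X)$ for the functor $\mathcal{K}$ assigns to a morphism $\tau : X\to Y$, and $\mathcal{F}_A : \mathcal{K}(A)\to\mathbf{C}$ for the component of the cone at $A$. The elementary facts I will use are that $\mathcal{K}$ is an \emph{honest} functor, so $\mathcal{K}_{\mathrm{id}_A}=\mathrm{id}_{\mathcal{K}(A)}$ and $\mathcal{K}_{\tau\circ\lambda}=\mathcal{K}_\lambda\circ\mathcal{K}_\tau$ strictly, and that each $\mathcal{F}_A$ is a functor.

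Conditions (i) and (ii) are immediate. For (i): $\mathcal{L}(A)=\mathcal{F}_A(0_A)$ is an object of $\mathbf{C}$, and by construction every $f\in\mathcal{L}(\tau:A\to B)$ has domain $\mathcal{F}_A(0_A)=\mathcal{L}(A)$ and codomain $\mathcal{F}_B(0_B)=\mathcal{L}(B)$, so $\mathcal{L}(\tau)\in\mathcal{P}[\mathbf{C}](\mathcal{L}(A),\mathcal{L}(B))$. For (ii): when $\tau=\mathrm{id}_A$ we have $\mathcal{K}_{\mathrm{id}_A}=\mathrm{id}$, so for $f=\mathrm{id}_{\mathcal{F}_A(0_A)}$ the defining square at an object $S$ of $\mathcal{K}(A)$ asks for an $f_S$ with $\mathcal{F}_A(u_S)\circ\mathrm{id}=f_S\circ\mathcal{F}_A(u_S)$, and $f_S=\mathrm{id}_{\mathcal{F}_A(S)}$ works; hence $\mathrm{id}_{\mathcal{L}(A)}\in\mathcal{L}(\mathrm{id}_A)$.

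The content lies in (iii), which I would prove by a single pasting of commuting squares. Fix $\lambda:A\to B$ and $\tau:B\to C$ and take $g\in\mathcal{L}(\lambda)$, $f\in\mathcal{L}(\tau)$; then $f\circ g$ is a morphism $\mathcal{L}(A)\to\mathcal{L}(C)$, so (since membership in $\mathcal{L}(\tau)\circ\mathcal{L}(\lambda)$ is automatic in $\mathcal{P}[\mathbf{C}]$) it suffices to produce, for each object $S$ of $\mathcal{K}(C)$, a filler $h_S$ for the defining square of $\tau\circ\lambda$ at $S$. The defining property of $f$ at $S$ yields $f_S:\mathcal{F}_B(\mathcal{K}_\tau(S))\to\mathcal{F}_C(S)$ with $\mathcal{F}_C(u_S)\circ f=f_S\circ\mathcal{F}_B(u_{\mathcal{K}_\tau(S)})$; the defining property of $g$ at the object $\mathcal{K}_\tau(S)$ of $\mathcal{K}(B)$ yields $g_{\mathcal{K}_\tau(S)}:\mathcal{F}_A(\mathcal{K}_\lambda\mathcal{K}_\tau(S))\to\mathcal{F}_B(\mathcal{K}_\tau(S))$ with $\mathcal{F}_B(u_{\mathcal{K}_\tau(S)})\circ g=g_{\mathcal{K}_\tau(S)}\circ\mathcal{F}_A(u_{\mathcal{K}_\lambda\mathcal{K}_\tau(S)})$. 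Pasting these two squares along the shared edge $\mathcal{F}_B(u_{\mathcal{K}_\tau(S)})$ and using $\mathcal{K}_\lambda\circ\mathcal{K}_\tau=\mathcal{K}_{\tau\circ\lambda}$ --- so that $\mathcal{K}_\lambda\mathcal{K}_\tau(S)=\mathcal{K}_{\tau\circ\lambda}(S)$ and $u_{\mathcal{K}_\lambda\mathcal{K}_\tau(S)}=u_{\mathcal{K}_{\tau\circ\lambda}(S)}$ on the nose --- gives $\mathcal{F}_C(u_S)\circ(f\circ g)=(f_S\circ g_{\mathcal{K}_\tau(S)})\circ\mathcal{F}_A(u_{\mathcal{K}_{\tau\circ\lambda}(S)})$. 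Thus $h_S:=f_S\circ g_{\mathcal{K}_\tau(S)}$, which has the right domain $\mathcal{F}_A(\mathcal{K}_{\tau\circ\lambda}(S))$ and codomain $\mathcal{F}_C(S)$, is the required filler, so $f\circ g\in\mathcal{L}(\tau\circ\lambda)$ and $\mathcal{L}$ is a composable constraint encoding.

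I do not anticipate a serious obstacle: this is a routine diagram chase, and since the cone is taken over the \emph{discrete} diagram $\mathcal{K}_\subseteq$, no naturality or compatibility condition relating the legs $\mathcal{F}_A$ for different $A$ is ever used --- all the ``gluing'' across objects passes through the constrained morphism $f$ itself together with the canonical maps out of initial objects. The one point requiring care is bookkeeping of the variance of $\mathcal{K}$: one must check that $\mathcal{K}_\tau(S)$ indeed lies in $\mathcal{K}(B)$ (so that $g$'s defining property is applicable there) and that the pasted witnesses compose with matching (co)domains, which is exactly what makes the strict identity $\mathcal{K}_{\tau\circ\lambda}=\mathcal{K}_\lambda\circ\mathcal{K}_\tau$ essential. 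Finally, to back up the closing claim that for the $\mathcal{K}$ and cone of Section~\ref{sec:Relational Constraints} this reduces to relational constraints, I would simply unwind the definitions: there $\mathcal{F}_A(0_A)=\bigotimes_i A_i=\mathcal{L}_{\mathbf{Rel}}([A_1,\dots,A_n])$, the map $\mathcal{F}_B(u_S)$ is the discarding map $d_S$, and $\mathcal{F}_A(u_{\mathcal{K}_\tau(S)})$ is $d_{\{i\,:\,\tau_i\subseteq S\}}$, so the defining square reads $d_S\circ f=f_S\circ d_{\{i:\tau_i\subseteq S\}}$ for all $S\subseteq\texttt{label}(L)$, which is equivalent to the condition (\ref{eq:SatisfactionRelConstraints}) by Lemma~\ref{lemma:Domain Atomicity}.
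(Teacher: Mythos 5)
Your proof of the central laxity condition is correct and is essentially the paper's own argument: paste the defining square for $f$ at $S$ on top of the defining square for $g$ at $\mathcal{K}_\tau(S)$, and use the contravariant functoriality $\mathcal{K}_{\tau\circ\lambda}=\mathcal{K}_\lambda\circ\mathcal{K}_\tau$ to identify the bottom edge as the one required for $\tau\circ\lambda$; the witness $f_S\circ g_{\mathcal{K}_\tau(S)}$ is exactly the filler the paper constructs. You also explicitly check the unit condition $\{\mathrm{id}\}\subseteq\mathcal{L}(\mathrm{id})$, which the paper leaves implicit, and your unwinding of the relational-constraints special case is a reasonable addition. The one substantive divergence is in the treatment of $2$-cells. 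You declare $\mathbf{Con}$ locally discrete, so monotonicity of $\mathcal{L}$ on $2$-morphisms is vacuous; the paper instead reads $\mathbf{Con}$ as carrying a (possibly nontrivial) locally posetal $2$-structure and $\mathcal{K}$ as a contravariant $2$-functor, and devotes the second half of its proof to showing that a $2$-cell $\alpha:\tau\to\tau'$ yields $\mathcal{L}(\tau)\subseteq\mathcal{L}(\tau')$: the component $\mathcal{K}(\alpha)_S:\mathcal{K}_\tau(S)\to\mathcal{K}_{\tau'}(S)$ is pushed through $\mathcal{F}_A$ and precomposed with the witness $f_S$ to produce a witness for $\tau'$. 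Since the theorem as stated speaks only of ``categories'' and ``a functor $\mathcal{K}$'', your reading is defensible and your proof is complete for that reading; but if one wants the construction to deliver constraint encodings with a nontrivial relaxation order (as the paper's general definition of a composable constraint encoding envisages), the extra step the paper carries out is needed, and you would have to add the hypothesis that $\mathcal{K}$ is a $2$-functor together with that one further diagram chase.
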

\begin{proof}
We must show that if $f \in \mathcal{L}(\tau)$ and $g \in \mathcal{L}(\lambda)$ then $ f \circ g \in \mathcal{L}(\tau \circ \lambda)$. Indeed the following diagram commutes:
\be \begin{tikzcd}
\mathcal{F}_{C}(\mathcal{K}_{C}^{i}) \arrow[rr, "\mathcal{F}_{C}(u_S)", rightarrow]                                                                       &  & \mathcal{F}_{C}(S)                                                                         \\
\mathcal{F}_{B}(\mathcal{K}_{B}^{i}) \arrow[u, "f", rightarrow] \arrow[rr, "\mathcal{F}_{B}(u_{\mathcal{K}_{\tau}(S)})", rightarrow]                      &  & \mathcal{F}_{B}\mathcal{K}_{\tau}(S) \arrow[u, "f_S"', rightarrow]                         \\
\mathcal{F}_{A}(\mathcal{K}_{A}^{i}) \arrow[u, "g", rightarrow] \arrow[rr, "\mathcal{F}_{A}(u_{\mathcal{K}_{\lambda}\mathcal{K}_{\tau}(S)})", rightarrow] &  & \mathcal{F}_{C}\mathcal{K}_{\lambda}\mathcal{K}_{\tau}(S) \arrow[u, "g_{S}"', rightarrow]
\end{tikzcd} \,.\ee 
The commutativity of this diagram follows from the commutativivity of the diagrams for $f$ and $g$ individually, and since $\mathcal{K}$ is a contravariant functor then $\mathcal{K}_{\lambda} \circ \mathcal{K}_{\tau} = \mathcal{K}_{\tau \circ \lambda}$, so the bottom morphism is the correct one for the constraint $\tau \circ \lambda$. Next, we need to show that for each $2$-morphism $\alpha:\tau \rightarrow \tau'$ of $\mathbf{Con}$ one can assign a $2$-morphism $\mathcal{L}(\alpha):\mathcal{L}(\tau) \rightarrow \mathcal{L}(\tau')$; indeed, there can only be one arrow from $\mathcal{L}(\tau)$ to $\mathcal{L}(\tau')$ since the two morphisms of the power-set category are simply subset-inclusions; so all that is required is to show that indeed whenever there is a morphism $\alpha:\tau \rightarrow \tau'$ then it is the case that $\mathcal{L}(\tau) \subseteq \mathcal{L}(\tau')$. Since $\mathcal{K}$ is a contravariant $2$-functor, every morphism $\alpha:\tau \rightarrow \tau'$ defines a natural transformation $\mathcal{K}(\alpha):\mathcal{K}(\tau') \Rightarrow \mathcal{K}(\tau)$. This means in turn that for every object $S \in \mathcal{K}(A)$ there exists a morphism $\mathcal{K}(\alpha)_{S}:\mathcal{K}(\tau)(S) \rightarrow \mathcal{K}(\tau')(S)$, which can be used to show that $f \in \mathcal{L}(\tau)$ implies $f \in \mathcal{L}(\tau')$. Indeed for every such $\alpha$, if $f \in \mathcal{L}(\tau)$ then there exists $f_S$ such that the outer path of the following diagram commutes: \be \begin{tikzcd}
\mathcal{F}_{C}(\mathcal{K}_{B}^{i}) \arrow[rr, "\mathcal{F}_{B}(u_S)"]                                                                                                               &  & \mathcal{F}_{B}(S)                                                                                                                                  &  &                                                                                   \\
\mathcal{F}_{A}(\mathcal{K}_{A}^{i}) \arrow[u, "f"] \arrow[rr, "\mathcal{F}_{A}(u_{\mathcal{K}_{\tau'}(S)})"] \arrow[rrrr, "\mathcal{F}_{A}(u_{\mathcal{K}_{\tau}(S)})"', bend right] &  & \mathcal{F}_{A}\mathcal{K}_{\tau'}(S) \arrow[u, "f_S \circ \mathcal{F}_{A}\mathcal{K}_{\alpha}"'] \arrow[rr, "\mathcal{F}_{A}\mathcal{K}_{\alpha}"] &  & \mathcal{F}_{A}\mathcal{K}_{\tau}(S) \arrow[llu, "f_S"', bend right, shift right]
\end{tikzcd}\,.\ee 
The bottom path commutes by functorality of $\mathcal{F}_{A}$ and terminality of $\mathcal{K}_{A}^{i}$, and the top right path commutes by definition; as a result the top left path commutes and so there indeed exists a morphism $f_S \circ \mathcal{F}_{A} \mathcal{K}_{\alpha}$ which makes the required diagram commute for $f$ to be in $\mathcal{L}(\tau')$. 
\end{proof}
%An easy to phrase consequence of the above theorem, highlights the fact that it ought to generate many examples, every cone with apex $\mathbf{C}$ over a functor $\mathcal{J}:\mathcal{D} \rightarrow \mathbf{Cat}_i$ defines a composable constraint from $\mathbf{D}^{op}$ to $\cat{C}$. Indeed since $\mathbf{Cat}$ is finitely complete, this will produce a wealth of examples. 

%\subsection{Non-monoidal relational constraints}
%Signalling constraints additionally appear in non-monoidal physical theories, nonetheless, one still find signalling constraints encoded by relations.
%\begin{example}[Signaalling in quantum networks]
%Blah
%\end{example}

\subsection{Constraint satisfaction problems} \label{sec:CSP}
In a constraint satisfaction problem (CSP), a function $f:V \rightarrow D$ is said to satisfy constraint $(k,\mathbf{x} \in V^{k},\rho \subseteq D^{k})$ if $(f(\mathbf{x}_1), \dots , f(\mathbf{x}_n)) \in \rho$, and is more generally said to satisfy a set of constraints $\mathbf{K} \subseteq \{(k,\mathbf{x} \in V^{k},\rho \subseteq D^{k})\}$ if it individually satisfies each such constraint. Here we will show first that a category $\texttt{CSP}_{S}[\mathbf{C}]$ of generalised CSPs can be constructed from any category $\mathbf{C}$, and that there always exists a composable constraint from $\texttt{CSP}_{S}[\mathbf{C}]$ to $\mathbf{C}$. We will see furthermore how to recover set-based CSPs as a special case of categorical CSPs on $\mathbf{Set}$. The natural number $k$ can be interpreted as an instance of an endofunctor $k(-)::S \mapsto S^{k}$, the element $\mathbf{x} \in k(V)$ can be re-stated as a state $\mathbf{x}:I \rightarrow k(V)$ and generalised to a morphism $\mathbf{a}: A \rightarrow k(V)$, the subset $B \subseteq k(D)$ can be generalised to a morphism $\mathbf{b}:B \rightarrow k(D)$. Using these observations we define for every category $\mathbf{C}$ and source object $S$ the category $\texttt{CSP}_{S}[\mathbf{C}]$ by the following data: 
\begin{itemize}
    \item objects of $\texttt{CSP}_{S}[\mathbf{C}]$ given by objects of $\mathbf{C}$;
    \item morphisms from $V$ to $V'$ given by subsets $\cat{C} \subseteq \{(\mathcal{F},\mathbf{a}:A \rightarrow \mathcal{F}(V),\mathbf{b}:B \rightarrow \mathcal{F}(V')\}$ where each $\mathcal{F}$ is an endofunctor.
\end{itemize}
Composition in $\texttt{CSP}_{S}[\mathbf{C}]$ is given by 
\begin{align}
    \cat{C}' \circ \cat{C} :=  \{(\mathcal{F},\mathbf{a},\mathbf{d}) | & 
     \exists{(\mathcal{F},\mathbf{a},\mathbf{b}) \in \cat{C}} \quad s.t \quad \forall m:S \rightarrow dom(\mathbf{b}) \\ & \exists{(\mathcal{F},\mathbf{c},\mathbf{d})} \in \cat{C}' \quad and \quad n:S \rightarrow dom(\mathbf{c}) \\
     & s.t \quad  \mathbf{b} \circ m=\mathbf{c} \circ n \quad \}\,.
\end{align}
As a commutative diagram the last condition reads \be \begin{tikzcd}
                                         & \mathcal{F}(V')                               &                                           \\
dom(\mathbf{b}) \arrow[ru, "\mathbf{b}"] & {} \arrow[loop, distance=2em, in=125, out=55] & dom(\mathbf{c}) \arrow[lu, "\mathbf{c}"'] \\
                                         & S \arrow[lu, "m"] \arrow[ru, "n"']            &                                          
\end{tikzcd}\,.\ee 
One can then assign a composable constraint in the following way.
\begin{theorem}
There exists a lax functor $\mathcal{L}:\texttt{CSP}_{S}[\mathbf{C}] \rightarrow \mathcal{P}[\mathbf{C}]$ given on objects by $\mathcal{L}(V) = V$ and on morphisms by \begin{align*}
\mathcal{L}(\cat{C}) := \{f|\textrm{ }\forall{(\mathcal{F},\mathbf{a},\mathbf{b}) \in \cat{C}} \textrm{ and } S \xrightarrow{x} dom(\mathbf{a}) \textrm{, } \exists{S \xrightarrow{m} dom(\mathbf{b})} \textrm{ s.t the diagram } \\
\begin{tikzcd}[ampersand replacement=\&,
]
\mathcal{F}(V) \arrow[rr, "\mathcal{F}(f)"] \&                                               \& \mathcal{F}(V')                          \\
dom(\mathbf{a}) \arrow[u, "\mathbf{a}"]     \& {} \arrow[loop, distance=2em, in=125, out=55] \& dom(\mathbf{b}) \arrow[u, "\mathbf{b}"'] \\
                                            \& S \arrow[lu, "x"] \arrow[ru, "m"']            \&                                         
\end{tikzcd} \textrm{ commutes } \}\,.
\end{align*}

\end{theorem}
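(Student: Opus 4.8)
The plan is to verify directly that $\mathcal{L}$ is a lax $2$-functor into the locally posetal $2$-category $\mathcal{P}[\mathbf{C}]$. As already observed in the paper, since $\mathcal{P}[\mathbf{C}]$ has at most one $2$-cell between any pair of parallel $1$-cells, all the coherence diagrams for a lax functor commute automatically, so it suffices to establish: (i) well-definedness; (ii) the laxity inclusions $\mathcal{L}(\cat{C}') \circ \mathcal{L}(\cat{C}) \subseteq \mathcal{L}(\cat{C}' \circ \cat{C})$ and $\{\mathrm{id}\} \subseteq \mathcal{L}(\mathrm{id})$; and (iii) monotonicity on $2$-cells. Well-definedness is immediate: $\mathcal{L}(V)=V$ is an object of $\mathbf{C}$, and $\mathcal{L}(\cat{C})$ is by construction a subset of $\mathbf{C}(V,V')$.

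The heart of the argument is lax functoriality on $1$-cells. Suppose $f \in \mathcal{L}(\cat{C})$ with $\cat{C}: V \to V'$ and $g \in \mathcal{L}(\cat{C}')$ with $\cat{C}': V' \to V''$, so that $f$ and $g$ are composable in $\mathbf{C}$; I must show $g \circ f \in \mathcal{L}(\cat{C}' \circ \cat{C})$. Fix a constraint $(\mathcal{F},\mathbf{a},\mathbf{d}) \in \cat{C}' \circ \cat{C}$ and a morphism $x: S \to \dom(\mathbf{a})$. By the definition of composition in $\texttt{CSP}_{S}[\mathbf{C}]$, there is a triple $(\mathcal{F},\mathbf{a},\mathbf{b}) \in \cat{C}$ such that for every $m: S \to \dom(\mathbf{b})$ there exist $(\mathcal{F},\mathbf{c},\mathbf{d}) \in \cat{C}'$ and $n: S \to \dom(\mathbf{c})$ with $\mathbf{b} \circ m = \mathbf{c} \circ n$. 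Applying $f \in \mathcal{L}(\cat{C})$ to the triple $(\mathcal{F},\mathbf{a},\mathbf{b})$ and the probe $x$ yields an $m: S \to \dom(\mathbf{b})$ with $\mathcal{F}(f) \circ \mathbf{a} \circ x = \mathbf{b} \circ m$. Feeding this particular $m$ into the composition property produces $(\mathcal{F},\mathbf{c},\mathbf{d}) \in \cat{C}'$ and $n: S \to \dom(\mathbf{c})$ with $\mathbf{b} \circ m = \mathbf{c} \circ n$. Finally, applying $g \in \mathcal{L}(\cat{C}')$ to $(\mathcal{F},\mathbf{c},\mathbf{d})$ and the probe $n$ yields $m': S \to \dom(\mathbf{d})$ with $\mathcal{F}(g) \circ \mathbf{c} \circ n = \mathbf{d} \circ m'$. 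Since $\mathcal{F}$ is an endofunctor, $\mathcal{F}(g \circ f) = \mathcal{F}(g) \circ \mathcal{F}(f)$, and chaining the three equalities gives $\mathcal{F}(g \circ f) \circ \mathbf{a} \circ x = \mathcal{F}(g) \circ \mathbf{b} \circ m = \mathcal{F}(g) \circ \mathbf{c} \circ n = \mathbf{d} \circ m'$, which is exactly the commuting square witnessing the constraint $(\mathcal{F},\mathbf{a},\mathbf{d})$ for $g \circ f$ with probe $x$. As $(\mathcal{F},\mathbf{a},\mathbf{d})$ and $x$ were arbitrary, $g \circ f \in \mathcal{L}(\cat{C}' \circ \cat{C})$.

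For identity preservation one checks $\mathrm{id}_V \in \mathcal{L}(\mathrm{id}_V)$: on each constraint of the identity morphism of $\texttt{CSP}_{S}[\mathbf{C}]$ (a diagonal triple $(\mathrm{Id},\mathbf{a},\mathbf{a})$ with the identity endofunctor) and each probe $x$, the witness $m := x$ makes the square commute, since $\mathrm{Id}(\mathrm{id}_V) \circ \mathbf{a} \circ x = \mathbf{a} \circ x$. Monotonicity on $2$-cells is immediate from the conjunctive form of the defining condition: if $\cat{C}$ is the stronger $1$-cell --- i.e. $\cat{C} \supseteq \cat{C}'$ as sets of constraints --- then every $f$ meeting all constraints in $\cat{C}$ meets all constraints in $\cat{C}'$, so $\mathcal{L}(\cat{C}) \subseteq \mathcal{L}(\cat{C}')$, supplying the unique required $2$-cell, and these inclusions compose as subset inclusions do.

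I expect the only genuine subtlety to be the bookkeeping of the nested quantifiers in the composition rule of $\texttt{CSP}_{S}[\mathbf{C}]$: the witness $(\mathcal{F},\mathbf{a},\mathbf{b}) \in \cat{C}$ must be selected first, then $f$ is used to manufacture the morphism $m$, and only then is the ``for all $m$'' clause invoked --- carrying out the three-step diagram chase in that order is what makes the argument close. Everything else (well-definedness, identities, coherence via uniqueness of $2$-cells, and monotonicity) is routine.
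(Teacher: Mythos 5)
Your proof is correct and takes essentially the same route as the paper: the three-step chase --- use $f\in\mathcal{L}(\cat{C})$ to manufacture $m$, feed that $m$ into the ``for all $m$'' clause of the composition rule to obtain $(\mathcal{F},\mathbf{c},\mathbf{d})$ and $n$, then use $g\in\mathcal{L}(\cat{C}')$ on the probe $n$, gluing everything with functoriality of $\mathcal{F}$ --- is precisely the commutative diagram the paper draws, just written out as equations. Your extra checks of identity preservation and $2$-cell monotonicity (which the paper leaves implicit, and which require the natural guesses you make about the identity morphisms and the posetal structure of $\texttt{CSP}_{S}[\mathbf{C}]$) are also sound.
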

\begin{proof} That this is a composable constraint follows from the commutativity of the following diagram. For every $x$, the existence of an $m$ making the bottom left diagram commute is given by $f$ satisfying the constraint $\cat{C}$; the existence of $n$ making the bottom diagram commute is given by the definition of composition in \texttt{CSP}; and the existence of $z$ making the bottom right diagram commute is given by $g$ satisfying constraint $\cat{C}'$.
\be \begin{tikzcd}
                                                                                                                                         &                   & {} \arrow[loop, distance=2em, in=305, out=235]                                               &                                                &                                         \\
\mathcal{F}(V) \arrow[rr, "\mathcal{F}(f)", bend left] \arrow[rrrr, "\mathcal{F}(g \circ f)", bend left=49] & {} \arrow[loop, distance=2em, in=305, out=235] & \mathcal{F}(V') \arrow[rr, "\mathcal{F}(g)", bend left]                                      & {} \arrow[loop, distance=2em, in=305, out=235] & \mathcal{F}(V')                        \\
dom(\mathbf{a}) \arrow[u, "\mathbf{a}"']                                                                    & dom(\mathbf{b}) \arrow[ru, "\mathbf{b}"']      & {} \arrow[loop, distance=2em, in=125, out=55]                                                & dom(\mathbf{c}) \arrow[lu, "\mathbf{c}"]       & dom(\mathbf{d}) \arrow[u, "\mathbf{d}"] \\
                                                                                                            &                                                & S \arrow[lu, "m"'] \arrow[ru, "n"] \arrow[llu, "x"', bend left] \arrow[rru, "z", bend right] &                                                &                                        
\end{tikzcd}\ee  The upper commutative diagram is functoriality.
\end{proof}
The subcategory $\texttt{CSP}[\mathbf{Set}]$ of $\texttt{CSP}_{I}[\mathbf{Set}]$ defined by keeping only those morphisms $\cat{C}:V \rightarrow V'$ such that $(\mathcal{F},\mathbf{a},\mathbf{b}) \in \cat{C}$ implies that there is a $k$ such that $\mathcal{F}(-) = (-)^{k}$ and $dom(\mathbf{a}) = \{ \bullet \}$ is the category whose morphisms from $V$ to $V'$ are exactly the constraint satisfaction problems from $V$ to $V'$. To see this, note that the previous constraint definition reduces to the existence of an $m$ such that:
\be \begin{tikzcd}
V^k \arrow[rr, "f^k"]                &                                                 & {V'}^k                       \\
\{\bullet \} \arrow[u, "\mathbf{a}"] & {} \arrow[loop, distance=2em, in=125, out=55]   & \rho \arrow[u, "\subseteq"'] \\
                                     & \{ \bullet \} \arrow[lu, "id"] \arrow[ru, "m"'] &                             
\end{tikzcd}\,,\ee 
i.e. such that $f(a_1) \dots f(a_k) \in \rho$. The composition rule then reduces to the  condition \be  (k,\mathbf{a},\sigma) \in \mathcal{C}' \circ \mathcal{C} \iff \exists (k,\mathbf{a},\rho)\textrm{ s.t } \forall \mathbf{m} \in \rho \textrm{ } \exists (k,\mathbf{m},\sigma) \in \mathcal{C}' \ee  between standard constraint instances.

\section{Conclusion} \label{sec:conclusion}

In this work, we described \textit{composable constraint encoding}, a general notion that allows to endow a (possibly symmetric monoidal, or $\dagger$-compact, etc.) category \cat{Con} with the interpretation of representing constraints for the morphisms of another $\dagger$-symmetric monoidal (or $\dagger$-compact) category \cat{C}. We showed that, given such a composable constraint, one can combine \cat{C} and \cat{Con} into a \textit{constrained category} that features a parallel calculus, taking place both in \cat{Con} and in \cat{C}. In addition to being an intuitive representation for the user, the notion of a constrained category can be viewed as equivalent to the notion of a composable constraint. We presented several general examples, those of thin constraints, constraint satisfaction problems, sectorial constraints, and signalling constraints; the latter two were in fact found to arise from the same general categorical construction of relational constraints applied to distinct categorical structures. We proceeded to develop the formal language of constraints by introducing and studying additional notions of time-symmetry and of intersectability, proving that the former entails the latter.

As a first and focused outcome, providing a simple notion which encompasses both the constructions introduced by \cite{vanrietvelde2020routed} to describe sectorial constraints in \FHilb and the study of signalling conditions in unitary channels \cite{barrett2019}, the theory of composable constraints gives a formal backing to their intuitive similarity. In addition, the abstract phrasing of the framework leaves room for developing a language which can be used to study and understand their essential differences, for instance by expanding the observations of \cite{barrett2019} in which links are noted between important features of constraints and their time-symmetry. A more general outcome is the definition of the abstract structure underlying composable constraints, paving the way to the modelisation and discovery of examples beyond the current familiarity of the authors. 

A first future research direction focused specifically on the study of quantum theory, would be to take advantage of their general nature of the language of composable constraints and the constructions presented to model more elaborate quantum scenarios. For instance, sectorial constraints and signalling constraints ought to be extendable to infinite-dimensional Hilbert spaces; this may require generalisation of the construction of relational constraints, and give an opportunity to study the consequences of infinite dimensionality on intersectability and time-symmetry of constraints. Another direction would be to study both the relation of these constructions with, and their applications to, the study of causality in quantum theory. Besides causal decompositions, whose description requires the use of matching routes, indefinite causal order, which has recently been the subject of a lot of investigation, has deep connections with both signalling constraints on channels \textit{and} routed circuits \cite{barrett2020cyclic}; it may then be fruitful to apply the idea of routes to recent categorical investigations into causal structure \cite{Kissinger2019AStructure}. In any case the framework turns out to be sufficiently flexible to allow for theorems on the decomposition of a process into sectors based on signalling constraints \cite{lorenz2020} to be expressed neatly as genuine equations within a single category.

In the general non-quantum setting, a natural future direction of investigation is to model and construct constraints in yet other contexts; it is expected in particular that instances of composable constraints which leverage allegories and bicategories of relations \cite{Carboni1987CartesianI,Carboni2008CARTESIANII, Freyd1990CategoriesAllegories}, as opposed to their motivating instance of $\mathbf{Rel}$ are waiting to be observed within the current literature.

\begin{acknowledgments}
It is a pleasure to thank Jonathan Barrett, Bob Coecke, James Hefford, Aleks Kissinger, Hlér Kristjánsson, Nick Ormrod, and Vincent Wang for helpful discussions, advice and comments. Furthermore, the authors are extremely grateful for the valuable insight of an anonymous reviewer from ACT 2021, who suggested the categorical phrasing of the concept of a constraint as a lax functor. AV thanks Alexandra Elbakyan for her help to access the scientific literature. AV is supported by the EPSRC Centre for Doctoral Training in Controlled Quantum Dynamics. MW was supported by University College London and the EPSRC [grant number EP/L015242/1]. This publication was made possible through the support of the grant 61466 `The Quantum Information Structure of Spacetime (QISS)' (qiss.fr) from the John Templeton Foundation. The opinions expressed in this publication are those of the authors and do not necessarily reflect the views of the John Templeton Foundation.
\end{acknowledgments}

\newpage
\bibliographystyle{utphys.bst}
\bibliography{references}

\appendix

\begin{theorem}[Monoidal Category of Constrained Morphisms]
Let $\mathcal{L}: \mathbf{Con} \longrightarrow \mathcal{P}[\cat{C}]$ be a strong monoidal composable constraint, The constrained category $\cat{C}_{\mathcal{L}}$ is a monoidal category.
\end{theorem}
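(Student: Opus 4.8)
The plan is to transport the monoidal structure onto $\cat{C}_{\mathcal{L}}$ one coordinate at a time, using the monoidal structure of $\mathbf{Con}$ for the $\mathbf{Con}$-component, that of $\cat{C}$ for the $\cat{C}$-component, and the comparison cells $\phi$ of $\mathcal{L}$ to reconcile the two (which is needed because the objects of $\cat{C}_{\mathcal{L}}$ are forced to have the shape $(x,\mathcal{L}(x))$, not $(x\otimes y,\mathcal{L}(x)\otimes\mathcal{L}(y))$). First I would set, on objects, $(a,\mathcal{L}(a))\otimes(b,\mathcal{L}(b)) := (a\otimes_{\mathbf{Con}} b,\ \mathcal{L}(a\otimes_{\mathbf{Con}} b))$ and $I_{\cat{C}_{\mathcal{L}}} := (I_{\mathbf{Con}},\mathcal{L}(I_{\mathbf{Con}}))$, which are legitimate objects of $\cat{C}_{\mathcal{L}}$; and on morphisms $(\tau,f)\otimes(\tau',f') := \bigl(\tau\otimes_{\mathbf{Con}}\tau',\ \phi_{b,b'}\circ(f\otimes_{\cat{C}} f')\circ\phi_{a,a'}^{-1}\bigr)$, where strongness is what lets us invert the (singleton, iso) component of $\phi$. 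That the second coordinate lands in $\mathcal{L}(\tau\otimes\tau')$ is exactly the oplax-naturality cell of $\phi$ at the pair $(\tau,\tau')$, which in $\mathcal{P}[\cat{C}]$ reads $\phi_{b,b'}\circ(\mathcal{L}(\tau)\otimes\mathcal{L}(\tau'))\subseteq\mathcal{L}(\tau\otimes\tau')\circ\phi_{a,a'}$; evaluating at $f\otimes f'$ and precomposing with $\phi_{a,a'}^{-1}$ gives the claim, which in the strict case $\phi=\mathrm{id}$ reduces to the inclusion $\mathcal{L}(\tau)\otimes\mathcal{L}(\tau')\subseteq\mathcal{L}(\tau\otimes\tau')$ used in the main text. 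Functoriality of $\otimes$ on $\cat{C}_{\mathcal{L}}$ then follows coordinatewise: the $\mathbf{Con}$-component because $\otimes_{\mathbf{Con}}$ is a functor, and the $\cat{C}$-component by a one-line diagram chase using the interchange law in $\cat{C}$ and the cancellation $\phi_{b,b'}^{-1}\circ\phi_{b,b'}=\mathrm{id}$.

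Next I would define the coherence isomorphisms of $\cat{C}_{\mathcal{L}}$ with $\mathbf{Con}$-components $\alpha_{\mathbf{Con}},\lambda_{\mathbf{Con}},\rho_{\mathbf{Con}}$ and $\cat{C}$-components the evident conjugates of $\alpha_{\cat{C}},\lambda_{\cat{C}},\rho_{\cat{C}}$ by the relevant instances of $\phi$ and $\phi_o$ — e.g. the associator at $((a,\mathcal{L}(a)),(b,\mathcal{L}(b)),(c,\mathcal{L}(c)))$ has $\cat{C}$-component $\theta := \phi_{a,b\otimes c}\circ(\mathrm{id}\otimes\phi_{b,c})\circ\alpha_{\cat{C}}\circ(\phi_{a,b}^{-1}\otimes\mathrm{id})\circ\phi_{a\otimes b,c}^{-1}$. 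That $\theta\in\mathcal{L}(\alpha_{\mathbf{Con}})$, and likewise for the unitor components, is exactly what the three coherence squares in the definition of a monoidal composable constraint assert once one rearranges them using that $\phi$ and $\phi_o$ are invertible. Naturality of $\alpha,\lambda,\rho$ in $\cat{C}_{\mathcal{L}}$ then holds coordinatewise — trivially in $\mathbf{Con}$, and in $\cat{C}$ from naturality of $\alpha_{\cat{C}},\lambda_{\cat{C}},\rho_{\cat{C}}$ together with the oplax naturality of $\phi$, whose instances on the two sides of a naturality square cancel against their inverses.

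Finally I would verify the pentagon and triangle equations. In the $\mathbf{Con}$-component they hold because $\mathbf{Con}$ is monoidal; in the $\cat{C}$-component the pentagon becomes a large diagram built from five $\phi$-conjugated copies of $\alpha_{\cat{C}}$, which — after repeatedly cancelling $\phi\circ\phi^{-1}$ and inserting the associativity coherence square for $\phi$ wherever the bracketing of the $\phi$'s changes — collapses onto the pentagon of $\cat{C}$; the triangle is handled the same way using the two unit squares and $\phi_o$. This is the familiar computation showing that the structure isomorphisms of a strong monoidal functor are coherent, carried out coordinatewise, and the strict case already worked out in the main text exhibits its skeleton. I expect the main obstacle to be purely this bookkeeping: keeping track of exactly where each $\phi$ sits in the $\cat{C}$-component of the pentagon and invoking the right coherence square — nothing conceptually new happens, but the diagram is sizeable. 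One subtler point deserves flagging: since $\mathcal{L}$ is only a lax functor, one cannot infer from lax functoriality alone that $\mathcal{L}(\alpha_{\mathbf{Con}}^{-1})$ behaves like the inverse of $\mathcal{L}(\alpha_{\mathbf{Con}})$, so to see that the coherence isomorphisms are genuinely invertible in $\cat{C}_{\mathcal{L}}$ — i.e. that the inverse pairs $(\alpha_{\mathbf{Con}}^{-1},\theta^{-1})$, etc., are again constrained morphisms — one must also read the coherence squares for the inverse coherence isomorphisms of $\mathbf{Con}$, which is licensed by the full coherence of a monoidal homomorphism in the locally posetal setting. It is likewise worth recording that strongness is used essentially, precisely so that the chosen $\cat{C}$-components can have source and target $\mathcal{L}(a\otimes_{\mathbf{Con}}b)$, as the objects of $\cat{C}_{\mathcal{L}}$ demand, rather than merely $\mathcal{L}(a)\otimes\mathcal{L}(b)$.
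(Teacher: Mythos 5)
Your proposal is correct and follows essentially the same route as the paper's appendix proof: the monoidal product on objects is $(a\otimes b,\mathcal{L}(a\otimes b))$, the $\cat{C}$-component of $(\tau,f)\otimes(\tau',f')$ is $\phi\circ(f\otimes f')\circ\phi^{-1}$, the coherence isomorphisms are the $\phi$-conjugates of those of $\cat{C}$ paired with those of $\mathbf{Con}$, and well-definedness, naturality, triangle and pentagon are checked by exactly the $\phi$-cancellation and coherence-square chases you describe. Your explicit flagging of why the conjugated coherence cells are genuinely invertible morphisms of $\cat{C}_{\mathcal{L}}$ (laxity alone not sufficing) is a worthwhile point that the paper leaves implicit, but it does not change the argument.
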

\begin{proof}
The monoidal product is given on objects by $(A,\mathcal{L}(A)) \odot (B,\mathcal{L}(B)) := (A\otimes B, \mathcal{L}(A \otimes B))$, on morphisms $(\tau,f) \odot (\lambda,g)$ is given by $(\tau \otimes \lambda, f \odot g)$ where $f \odot g$ is defined by requiring the following diagram to commute.

\be \begin{tikzcd}
\mathcal{F}(A \otimes B) \arrow[d, "\phi_{AB}^{-1}"'] \arrow[r, "f \odot g"] & \mathcal{F}(A' \otimes B') \arrow[d, "\phi_{A'B'}^{-1}"] \\
\mathcal{F}(A) \otimes \mathcal{F}(B) \arrow[r, "f \otimes g"]               & \mathcal{F}(A') \otimes \mathcal{F}(B')                 
\end{tikzcd}\ee 

First we must show that $\odot$ is a bifunctor, that is, that it satisfies the interchange law: \be \begin{tikzcd}
\mathcal{F}(A \otimes B) \arrow[d, "\phi_{AB}^{-1}"'] \arrow[r, "f_1 \odot g_1"] \arrow[rrr, "(f_2 \circ f_1)\odot(g_2 \circ g_1)", bend left] & \mathcal{F}(A' \otimes B') \arrow[r, "id"]                                       & \mathcal{F}(A' \otimes B') \arrow[d, "\phi_{A'B'}^{-1}"] \arrow[r, "f_2 \odot g_2"] & \mathcal{F}(A' \otimes B')                                          \\
\mathcal{F}(A) \otimes \mathcal{F}(B) \arrow[r, "f_1 \otimes g_1"']                                                                            & \mathcal{F}(A') \otimes \mathcal{F}(B') \arrow[u, "\phi_{A'B'}"] \arrow[r, "id"] & \mathcal{F}(A') \otimes \mathcal{F}(B') \arrow[r, "f_2 \otimes g_2"']               & \mathcal{F}(A') \otimes \mathcal{F}(B') \arrow[u, "\phi_{A'B'}"']
\end{tikzcd} \,.\ee 

The commutativity of the top follows from commutativity of the outer diagram and the lower diagrams. We then take the unit object $I_{\mathbf{C}_{\mathcal{L}}}$ of $\mathbf{C}_{\mathcal{L}}$ to be the unit object $I_{\mathbf{Con}}$ of $\mathbf{Con}$. A natural transformation $I \odot - \Rightarrow -$ can be defined by the components $(\lambda_A,\Lambda_A):I \odot A \rightarrow A$ where $\Lambda_A$ is defined by the following diagram:
\be  \begin{tikzcd}
\mathcal{L}IA \arrow[r, "\Lambda_{A}"] \arrow[d, "\psi_{IA}"']       & \mathcal{L}A                           \\
\mathcal{L}I\mathcal{L}A \arrow[r, "\psi \otimes i_{\mathcal{L}A}"'] & I \mathcal{L}A \arrow[u, "\lambda_A"']
\end{tikzcd} \ee 
and similarly for a right unitor $(\rho,P)$. Naturality of the components of $(\lambda,\Lambda)$ follows from from the following commutative diagrams:
\be  \begin{tikzcd}
                                                                                                        & \mathcal{L}A \arrow[r, "f"]                                                    &  \mathcal{L}B                                                               &                                         \\
                                                                                                        & I \mathcal{L}A \arrow[u, "\lambda_{\mathcal{L}A}"'] \arrow[r, "i_{I}f"']       & I \mathcal{L}B \arrow[u, "\lambda_{\mathcal{L}B}"]                          &                                         \\
\mathcal{L}IA \arrow[r, "\phi_{IA}"] \arrow[ruu, "\Lambda_A"] \arrow[rrr, "i_{I} \odot f"', bend right] & \mathcal{L}I\mathcal{L}A \arrow[u, "\lambda_A"] \arrow[r, "i_{\mathcal{L}I}f"] & \mathcal{L}I\mathcal{L}B \arrow[u, "\lambda_B"] \arrow[r, "\phi_{IB}^{-1}"] & \mathcal{L}IB \arrow[luu, "\Lambda_B"']
\end{tikzcd} \ee 
A natural transformation $(- \odot -) \odot - \Rightarrow - \odot (- \odot -)$ can be defined by the components $(\alpha,\bar{\alpha}): (a \odot b) \odot c \rightarrow a \odot (b \odot c)$ where $\bar{\alpha}:\mathcal{L}((a \otimes b) \otimes c)) \rightarrow \mathcal{L}(a \otimes (b \otimes c))$ is defined by the following diagram: \be \begin{tikzcd}
\mathcal{L}(AB)C \arrow[r, "\mathrm{A}"] \arrow[d, "{\psi_{AB,C}}"'] & \mathcal{L}A(BC)                                                 \\
\mathcal{L}AB\mathcal{L}C \arrow[d, "{\psi_{A,B}i}"']                & \mathcal{L}A\mathcal{L}BC \arrow[u, "{\psi_{A,BC}}"']            \\
(\mathcal{L}A\mathcal{L}B)\mathcal{L}C \arrow[r, "\alpha"]           & \mathcal{L}A(\mathcal{L}B\mathcal{L}C) \arrow[u, "i \psi_{BC}"']
\end{tikzcd}\ee 

and naturality follows from the following commutative diagram.
\be \begin{tikzcd}[scale cd=0.7]
\mathcal{L}(AB)C \arrow[d, "\mathrm{A}"] \arrow[r, "\psi^{-1}"'] \arrow[rrrrr, "(f \odot g) \odot h", bend left=49] & \mathcal{L}AB \mathcal{L}C \arrow[r, "\psi^{-1}i"'] \arrow[rrr, "(f \odot g)h", bend left]   & ( \mathcal{L}A \mathcal{L}B )\mathcal{L}C \arrow[r, "(fg)h"'] \arrow[d, "\alpha"] & ( \mathcal{L}A' \mathcal{L}B' )\mathcal{L}C' \arrow[d, "\alpha"] \arrow[r, "\psi i"'] & \mathcal{L}A'B' \mathcal{L}C' \arrow[r, "\psi"'] & \mathcal{L}(A'B')C' \arrow[d, "\mathrm{A}"'] \\
\mathcal{L}A(BC) \arrow[r, "\psi^{-1}"] \arrow[rrrrr, "f \odot (g \odot h)"', bend right=49]                        & \mathcal{L}A \mathcal{L}BC \arrow[r, "i \psi^{-1}"] \arrow[rrr, "f(g \odot h)"', bend right] &  \mathcal{L}A ( \mathcal{L}B \mathcal{L}C) \arrow[r, "f(gh)"]                     &  \mathcal{L}A' ( \mathcal{L}B' \mathcal{L}C') \arrow[r, "i \psi"]                     & \mathcal{L}A' \mathcal{L}B'C' \arrow[r, "\psi"]  & \mathcal{L}A'(B'C')                         
\end{tikzcd}\ee 
For the coherence conditions between associators and unitors, coherence is again trivial in the first component of each $(\tau,f)$, and coherence in the second component between the unitors and associators is guaranteed by the commutativity of the following diagram: \be \begin{tikzcd}[scale cd=0.7]
\mathcal{L}(AI)C \arrow[rrrddddd, "\mathrm{P} \odot i", bend right=49] \arrow[rrrrrr, "{\mathrm{A}_{A,I,C}}"'] \arrow[rd, "{\psi_{AI,C}}"] &                                                                                                   &                                                                                                                              &                                                 &                                                                &                                                                                                & \mathcal{L}A(IC) \arrow[lllddddd, "i \odot \Lambda"', bend left=49] \arrow[ld, "{\psi_{A,IC}}"'] \\
                                                                                                                                           & \mathcal{L}AI\mathcal{L}C \arrow[rrddd, "\mathrm{P}i", bend right=49] \arrow[rd, "{\psi_{A,I}i}"] &                                                                                                                              &                                                 &                                                                & \mathcal{L}A\mathcal{L}IC \arrow[llddd, "i \Lambda"', bend left=49] \arrow[ld, "i \psi_{IC}"'] &                                                                                                  \\
                                                                                                                                           &                                                                                                   & (\mathcal{L}A\mathcal{L}I)\mathcal{L}C \arrow[d, "(i \psi)i"] \arrow[rr, "{\alpha_{\mathcal{L}A\mathcal{L}I,\mathcal{L}C}}"] &                                                 & \mathcal{L}A(\mathcal{L}I\mathcal{L}C) \arrow[d, "i(\psi i)"'] &                                                                                                &                                                                                                  \\
                                                                                                                                           &                                                                                                   & ((\mathcal{L}A)I)\mathcal{L}C \arrow[rr, "{\alpha_{(\mathcal{L}A)I,\mathcal{L}C}}"] \arrow[rd, "\rho_{A} i"]                 &                                                 & \mathcal{L}A(I(\mathcal{L}C)) \arrow[ld, "i \lambda_A"']       &                                                                                                &                                                                                                  \\
                                                                                                                                           &                                                                                                   &                                                                                                                              & \mathcal{L}A\mathcal{L}C \arrow[d, "\psi_{AC}"] &                                                                &                                                                                                &                                                                                                  \\
                                                                                                                                           &                                                                                                   &                                                                                                                              & \mathcal{L}(AC)                                 &                                                                &                                                                                                &                                                                                                 
\end{tikzcd}\ee               
Finally, the pentagon identity follows from the following. 
\be \begin{tikzcd}[scale cd=0.7]
                                                                                                                 &                                                                                                   & \mathcal{L}(AB)(CD) \arrow[d, "\psi^{-1}"] \arrow[rrd, "\mathrm{A}"]                            &                                                                                                       &                                                                                               \\
\mathcal{L}((AB)C)D \arrow[rru, "\mathrm{A}"] \arrow[d]                                                          &                                                                                                   & \mathcal{L}AB\mathcal{L}CD \arrow[d, "\psi^{-1} i"]                                             &                                                                                                       & \mathcal{L}A(B(CD))                                                                           \\
(\mathcal{L}(AB)C)\mathcal{L}D \arrow[d]                                                                         & \mathcal{L}AB(\mathcal{L}C \mathcal{L}D) \arrow[ru, "i \psi"] \arrow[r, "\psi^{-1} \psi"']        & (\mathcal{L}A \mathcal{L} B ) \mathcal{L} CD \arrow[d, "(i i) \psi^{-1}"] \arrow[rrd, "\alpha"] &                                                                                                       & \mathcal{L}A\mathcal{L}B(CD) \arrow[u, "\psi"]                                                \\
(\mathcal{L}AB\mathcal{L}C) \mathcal{L}D \arrow[rd, "(\psi i) i"] \arrow[ru, "\alpha"']                          &                                                                                                   & (\mathcal{L}A \mathcal{L}B) (\mathcal{L}C \mathcal{L} D) \arrow[rd, "\alpha"]                   &                                                                                                       & \mathcal{L}A(\mathcal{L}A \mathcal{L}CD)) \arrow[u, "i \psi"]                                 \\
                                                                                                                 & ((\mathcal{L}A\mathcal{L}B) \mathcal{L}C) \mathcal{L}D \arrow[ru, "\alpha"] \arrow[d, "\alpha i"] &                                                                                                 & \mathcal{L}A( \mathcal{L}B(\mathcal{L}C \mathcal{L}D)) \arrow[ru, "i(i\psi)"]                         &                                                                                               \\
                                                                                                                 & (\mathcal{L}A(\mathcal{L}B\mathcal{L}C))\mathcal{L}D \arrow[rr, "\alpha"]                         &                                                                                                 & \mathcal{L}A ((\mathcal{L}B \mathcal{L}C) \mathcal{L}D) \arrow[u, "i \alpha"] \arrow[rd, "i(\psi i)"] &                                                                                               \\
(\mathcal{L}A\mathcal{L}BC) \mathcal{L}D \arrow[ru, "(i \psi)i"] \arrow[rrrr, "\alpha"']                         &                                                                                                   &                                                                                                 &                                                                                                       & \mathcal{L}A(\mathcal{L}BC\mathcal{L}D)) \arrow[d, "i \psi"']                                 \\
\mathcal{L}A(BC)\mathcal{L}D \arrow[u, "\psi i"'] \arrow[d, "\psi"] \arrow[uuuuu, "\mathrm{A} i"', bend left=49] &                                                                                                   &                                                                                                 &                                                                                                       & \mathcal{L}A \mathcal{L}(BC)D \arrow[d, "\psi"'] \arrow[uuuuu, "i \mathrm{A}", bend right=49] \\
\mathcal{L}(A(BC))D \arrow[uuuuuuu, "\mathrm{A} \odot i", bend left=49] \arrow[rrrr, "\mathrm{A}"]               &                                                                                                   &                                                                                                 &                                                                                                       & \mathcal{L}A((BC)D) \arrow[uuuuuuu, "i \odot \mathrm{A}"', bend right=49]                    
\end{tikzcd}\ee 
As a result, the constrained category is indeed monoidal.
\end{proof}

\begin{theorem}[Inheritance of structure to constrained categories]
If a composable constraint $\mathcal{L}$ is:
\begin{itemize}
    \item a $\dagger$-constraint, then $\mathbf{C}_{\mathcal{L}}$ is a $\dagger$-category;
    \item a braided monoidal constraint, then $\mathbf{C}_{\mathcal{L}}$ is braided monoidal;
    \item a compact closed constraint, then $\mathbf{C}_{\mathcal{L}}$ is compact closed.
\end{itemize}
\end{theorem}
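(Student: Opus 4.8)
The plan is to equip $\mathbf{C}_{\mathcal{L}}$ with each piece of extra structure \emph{componentwise}, exactly in the spirit of the monoidal case above: a structural morphism of $\mathbf{C}_{\mathcal{L}}$ is a pair $(\sigma_{\mathbf{Con}},\sigma_{\mathbf{C}})$ whose first component is the corresponding structural morphism of $\mathbf{Con}$ and whose second component is that of $\mathbf{C}$ (in the strict case; in the strong case the $\mathbf{C}$-component is dressed with the isomorphisms $\phi$, $\phi_o$, as $\bar\alpha$ and $\Lambda$, $\mathrm{P}$ were in the monoidal-category proof). The two facts to check in each case are: (i) \emph{well-definedness}, i.e.\ that the $\mathbf{C}$-component lies in $\mathcal{L}$ of the $\mathbf{Con}$-component --- and this is precisely the content of the corresponding compatibility condition imposed in the definitions of $\dagger$-, braided, and compact closed composable constraints; and (ii) the defining \emph{axioms}, which are all equations between morphisms and therefore hold in $\mathbf{C}_{\mathcal{L}}$ as soon as they hold separately in $\mathbf{Con}$ and in $\mathbf{C}$, since composition and monoidal product in $\mathbf{C}_{\mathcal{L}}$ are computed componentwise.

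Concretely, for the $\dagger$-case the condition $\mathcal{L}\circ\dagger_{\mathbf{Con}}=\dagger_{\mathcal{P}[\mathbf{C}]}\circ\mathcal{L}$ gives $\mathcal{L}(\tau^\dagger)=\{f^\dagger\mid f\in\mathcal{L}(\tau)\}$, so setting $(\tau,f)^\dagger:=(\tau^\dagger,f^\dagger)$ is well-defined; contravariant functoriality, identity-on-objects and involutivity are inherited componentwise, and the further requirement that the coherence isomorphisms of $\mathcal{L}$ be unitary makes the coherence isomorphisms of $\mathbf{C}_{\mathcal{L}}$ unitary, so $\mathbf{C}_{\mathcal{L}}$ is $\dagger$-(symmetric) monoidal. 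For the braided case, in the strict setting one puts $\beta_{\mathbf{C}_{\mathcal{L}}}:=(\beta_{\mathbf{Con}},\beta_{\mathbf{C}})$, which is a morphism of $\mathbf{C}_{\mathcal{L}}$ exactly because $\{\beta_{\mathbf{C}}\}=\beta_{\mathcal{P}[\mathbf{C}]}\subseteq\mathcal{L}(\beta_{\mathbf{Con}})$; naturality and the two hexagon identities hold componentwise. In the strong setting the $\mathbf{C}$-component is the conjugate of $\beta_{\mathcal{L}(A),\mathcal{L}(B)}$ by $\phi_{A,B}$ and $\phi_{B,A}$, and its membership in $\mathcal{L}(\beta_{A,B})$ is precisely the $2$-cell displayed in the definition of a braided monoidal constraint.

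For the compact closed case one takes the dual of $(A,\mathcal{L}(A))$ to be $(A^{*},\mathcal{L}(A^{*}))$ and the cup (resp.\ cap) of $\mathbf{C}_{\mathcal{L}}$ to be $(\cup_{\mathbf{Con}},\cup_{\mathbf{C}})$ (resp.\ its mate), with well-definedness of the cup being the condition $\cup_{\mathcal{P}[\mathbf{C}]}\subseteq\mathcal{L}(\cup_{\mathbf{Con}})$, equivalently the displayed $2$-cell; the cap is handled by the symmetric condition (or obtained from the cup using the $\dagger$ and braided structure already established). The two snake equations then hold in $\mathbf{C}_{\mathcal{L}}$ because they hold in $\mathbf{Con}$ and in $\mathbf{C}$ separately, so $\mathbf{C}_{\mathcal{L}}$ is compact closed.

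The only genuine work is the non-strict (strong) case: writing every structural morphism through the isomorphisms $\phi$, $\phi_o$ and verifying that each lands inside the prescribed subset $\mathcal{L}(-)$ and satisfies the required coherence diagrams is a sequence of diagram chases entirely parallel to --- and no harder than --- the monoidal-category proof already given in the appendix, using the coherence $2$-cells of $\mathcal{L}$ as a homomorphism of monoidal bicategories (which exist and are unique by uniqueness of $2$-morphisms in $\mathcal{P}[\mathbf{C}]$). In the strict case all of the above is immediate.
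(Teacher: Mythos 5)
Your proposal is correct and follows essentially the same route as the paper's appendix proof: define each piece of structure componentwise (dressing the $\mathbf{C}$-component with the coherence isomorphisms $\phi$, $\phi_o$ in the strong case), use the respective compatibility conditions ($\mathcal{L}\circ\dagger_{\mathbf{Con}}=\dagger_{\mathcal{P}[\mathbf{C}]}\circ\mathcal{L}$, the braiding $2$-cell, and $\cup_{\mathcal{P}[\mathbf{C}]}\subseteq\mathcal{L}(\cup_{\mathbf{Con}})$) for well-definedness, and verify the coherence axioms by diagram chases parallel to the monoidal case. The paper simply carries out explicitly the diagram chases you defer to, so nothing is missing in substance.
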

\begin{proof}
For $\dagger$-constraints, define $(\tau,f)^{\dagger}:=(\tau^{\dagger},f^{\dagger})$; this morphisms exists for any $(f,\tau)$ since $f \in \mathcal{L}(\tau) \implies f^{\dagger} \in \mathcal{L}(\tau)^{\dagger} = \mathcal{L}(\tau^{\dagger})$. If the constraint is furthermore $\dagger$-monoidal then the constrainted category is also $\dagger$-monoidal: in particular one can show that $((\tau,f) \odot (\tau',f'))^{\dagger} = (\tau,f)^{\dagger} \odot (\tau',f')^{\dagger}$, by the commutativity of the following diagram: \be \begin{tikzcd}
\mathcal{L}AB \arrow[d, "\phi^{\dagger}"]  & \mathcal{L}A'B' \arrow[l, "f^{\dagger} \odot g^{\dagger}"']                                                                   \\
\mathcal{L}A\mathcal{L}B \arrow[d, "\phi"] & \mathcal{L}A' \mathcal{L}B' \arrow[l, "(fg)^{\dagger}"] \arrow[l, "f^{\dagger}g^{\dagger}"', bend right=49] \arrow[u, "\phi"] \\
\mathcal{L}AB                              & \mathcal{L}A'B' \arrow[l, "(f \odot g)^{\dagger}"] \arrow[u, "\phi^{\dagger}"]                                               
\end{tikzcd} \,.\ee 

For braided constraints, the coherence conditions for $\mathbf{B}$ as defined in the main text are given by the following commutative diagram: \be \begin{tikzcd}[scale cd=0.7]
                                                                                         &                                                                                              & \mathcal{L}(AB)C \arrow[d, "\phi^{-1}"] \arrow[r, "\mathbf{A}"]                 & \mathcal{L}A(BC) \arrow[d, "\phi^{-1}"] \arrow[r, "\mathbf{B}"]                & \mathcal{L}(BC)A \arrow[d, "\phi^{-1}"] \arrow[rrdd, "i"]                       &                                       &                                           \\
                                                                                         &                                                                                              & (\mathcal{L}AB)\mathcal{L}C \arrow[d, "\phi^{-1}i"]                             & \mathcal{L}A(\mathcal{L}BC) \arrow[d, "i \phi^{-1}"] \arrow[r]                 & \mathcal{L}(BC)\mathcal{L}A \arrow[d, "i \phi^{-1}"'] \arrow[rd, "i"]           &                                       &                                           \\
\mathcal{L}(AB)C \arrow[rruu, "i"] \arrow[r, "\phi^{-1}"] \arrow[d, "\mathbf{B}\odot i"] & (\mathcal{L}AB)\mathcal{L}C \arrow[ru, "i"] \arrow[r, "\phi^{-1}i"] \arrow[d, "\mathbf{B}i"] & (\mathcal{L}A\mathcal{L}B)\mathcal{L}C \arrow[r, "\alpha"] \arrow[d, "\beta i"] & \mathcal{L}A(\mathcal{L}B\mathcal{L}C) \arrow[r]                               & (\mathcal{L}B\mathcal{L}C)\mathcal{L}A \arrow[d, "\alpha"] \arrow[r, "i \phi"'] & \mathcal{L}(BC)\mathcal{L}A \arrow[r] & \mathcal{L}(BC)A \arrow[d, "\mathbf{A}"'] \\
\mathcal{L}(BA)C \arrow[r, "\phi^{-1}"] \arrow[rrdd, "i"]                                & (\mathcal{L}BA)\mathcal{L}C \arrow[r, "\phi^{-1}i"] \arrow[rd, "i"]                          & (\mathcal{L}B\mathcal{L}A)\mathcal{L}C \arrow[r, "\alpha"]                      & \mathcal{L}B(\mathcal{L}A\mathcal{L}C) \arrow[r, "i \beta"]                    & \mathcal{L}B(\mathcal{L}C\mathcal{L}A) \arrow[r, "i \phi"]                      & \mathcal{L}B(\mathcal{L}CA) \arrow[r] & \mathcal{L}B(CA)                          \\
                                                                                         &                                                                                              & (\mathcal{L}BA)\mathcal{L}C \arrow[u, "\phi^{-1}i"']                            & \mathcal{L}B(\mathcal{L}AC) \arrow[u, "i \phi^{-1}"] \arrow[r, "i \mathbf{B}"] & \mathcal{L}B(\mathcal{L}CA) \arrow[u, "i \phi^{-1}"] \arrow[ru, "i"]            &                                       &                                           \\
                                                                                         &                                                                                              & \mathcal{L}(BA)C \arrow[u, "\phi^{-1}"'] \arrow[r, "\mathbf{A}"']               & \mathcal{L}B(AC) \arrow[u, "\phi^{-1}"] \arrow[r, "i \odot \mathbf{B}"']       & \mathcal{L}B(CA) \arrow[u, "\phi^{-1}"] \arrow[rruu, "i"]                       &                                       &                                          
\end{tikzcd}\ee

Finally, a compact closed category is one in which every object has a dual. We can show that whenever a constraint is itself compact closed as defined in the main text, then the resulting constraint category is also compact closed; this follows from the commutativity of the following diagram: \be \begin{tikzcd}
                                                                             &                                                     & \mathcal{L}IA \arrow[d, "\phi^{-1}"] \arrow[lld, "\Lambda"']                 &                                                                                 & \mathcal{L}(AA^{*})A \arrow[ld, "\phi^{-1}"'] \arrow[ll, "\mathbf{E} \odot i"'] \\
\mathcal{L}A                                                                 &                                                     & \mathcal{L}I \mathcal{L}A \arrow[d, "\phi^{-1}i"]                            & \mathcal{L}AA^{*}\mathcal{L}A \arrow[d, "\phi^{-1}i"] \arrow[l, "\mathbf{E}i"'] &                                                                                 \\
                                                                             & \mathcal{L}A \arrow[lu, "i"]                        & I \mathcal{L}A \arrow[l, "\lambda"]                                          & (\mathcal{L}A\mathcal{L}A^{*}) \mathcal{L}A \arrow[l, "\epsilon i"]             &                                                                                 \\
                                                                             &  \mathcal{L}A \arrow[r, "\rho^{-1}"] \arrow[u, "i"] & (\mathcal{L}A) I \arrow[r, "i \eta"]                                         & \mathcal{L}A (\mathcal{L}A^{*} \mathcal{L}A) \arrow[u, "\alpha"]                &                                                                                 \\
\mathcal{L}A \arrow[ru, "i"] \arrow[uuu, "i"] \arrow[rrd, "\mathbf{P}^{-1}"] &                                                     & \mathcal{L}A \mathcal{L}I \arrow[u, "i \phi^{-1}"] \arrow[r, "i \mathbf{N}"] & \mathcal{L}A\mathcal{L}A^{*}A \arrow[u, "i \phi^{-1}"]                          &                                                                                 \\
                                                                             &                                                     & \mathcal{L}AI \arrow[u, "\phi^{-1}"'] \arrow[rr, "i \odot \mathbf{N}"]       &                                                                                 & \mathcal{L}A(A^{*}A) \arrow[lu, "\phi^{-1}"'] \arrow[uuuuu, "\mathbf{A}"]      
\end{tikzcd}\,.\ee 

Commutativity of the diagram required for duals to be both left \textit{and} right duals is derived almost identically.

\end{proof}

\end{document}